\theoremstyle{plain}
\newtheorem{thm}{Theorem}[section]
\newtheorem{lemma}[thm]{Lemma}
\newtheorem{prop}[thm]{Proposition}
\newtheorem{cor}[thm]{Corollary}
\newtheorem{definition}[thm]{Definition}
\newtheorem*{theorem*}{Theorem}
\newtheorem*{prop*}{Proposition}
\newtheorem*{main theorem*}{Main Theorem}
\theoremstyle{definition}
\newtheorem{remark}[thm]{Remark}
\newtheorem{example}[thm]{Example}
\numberwithin{equation}{section}
\numberwithin{figure}{section}
\newcommand\blfootnote[1]{%
  \begingroup
  \renewcommand\thefootnote{}\footnote{#1}%
  \addtocounter{footnote}{-1}%
  \endgroup
}
\begin{document}

\tikzset{middlearrow/.style={
		decoration={markings,
			mark= at position 0.55 with {\arrow{#1}} ,
		},
		postaction={decorate}
	}
}

\title{Identifying Dehn Functions of Bestvina--Brady Groups From Their Defining Graphs}
\author{Yu-Chan Chang}
\address{Oxford College of Emory University \\ 801 Emory Street, Oxford, GA 30054}
\email{yuchanchang74321@gmail.com \\ yu-chan.chang@emory.edu}

\maketitle

\begin{abstract}
Let $\Gamma$ be a finite simplicial graph such that the flag complex on $\Gamma$ is a $2$-dimensional triangulated disk. We show that with some assumptions, the Dehn function of the associated Bestvina--Brady group is either quadratic, cubic, or quartic. Furthermore, we can identify the Dehn function from the defining graph $\Gamma$.
\end{abstract}

\blfootnote{The author gratefully acknowledges the support from the NSF Grant DMS-1812061.}

\section{introduction}

Dehn functions are one of the quasi-isometry invariants of finitely presented groups, and they have been studied by many people. One of the reasons people study Dehn functions is that they are related to the solvability of the word problem for finitely presented groups. That is, a finitely presented group has a solvable word problem if and only if its Dehn function is recursive. Besides the solvability of the word problem, Dehn functions can also detect certain structures in groups. For example, a group is hyperbolic if and only if it has a linear Dehn function \cite{gromovhyperbolicgroup}. We refer to \cite{bridsongeometryofwordproblem} for background on Dehn functions. In this paper, we study Dehn functions of Bestvina--Brady groups, which are a class of subgroups of right-angled Artin groups. 

Given a finite simplicial graph $\Gamma$, the associated right-angled Artin group $A_{\Gamma}$ is generated by the vertices of $\Gamma$. The relators are commutators: two generators $u$, $v$ commute if and only if they are adjacent vertices of $\Gamma$. Right-angled Artin groups have become an important objects that people study in geometric group theory; see \cite{charneyanintroductiontoraag} for a general survey. They are known to be $\mathrm{CAT}(0)$ groups; both categories of groups have at most quadratic Dehn functions. But subgroups can have larger Dehn functions. Brady and Forester \cite{bradyandforester} gave examples of $\mathrm{CAT}(0)$ groups that contain finitely presented subgroups whose Dehn functions are of the form $n^{\rho}$, for a dense set of $\rho\in[2,\infty)$. Brady and Soroko \cite{bradyandsorokodehnfunctionsofsubgroupsofraags} proved that for each positive integer $\rho$, there is a right-angled Artin group that contains a finitely presented subgroup whose Dehn function is $n^{\rho}$.

For a right-angled Artin group $A_{\Gamma}$, the associated Bestvina--Brady group $H_{\Gamma}$ is defined to be the kernel of the homomorphism $A_{\Gamma}\rightarrow\mathbb{Z}$, which sends all the generators of $A_{\Gamma}$ to $1$. This kernel had been studied prior to Bestvina--Brady. Stallings \cite{stallingsafinitelypresentedgroupwhose3dimentionalintegralhomologyisnotfinitelygenerated} constructed a group that is finitely generated but not finitely presented. This group can be realized as the Bestvina--Brady group $H_{\Gamma}=\ker(F_{2}\times F_{2}\rightarrow\mathbb{Z})$, where $\Gamma$ is a cycle graph $C_{4}$. When $\Gamma$ is taken to be the $(n+1)$-fold join of two vertices, the right-angled Artin group on $\Gamma$ is the $n$-copies of $F_{2}$, and Bieri \cite{bierinormalsubgroupsindualitygroupsandingroupsofcohomologicaldimension2} proved that the associated Bestvina--Brady group $H_{\Gamma}=\ker(F_{2}\times\cdots\times F_{2}\rightarrow\mathbb{Z})$ satisfies the finiteness property $\mathbf{FP}_{n}$ but not $\mathbf{FP}_{n+1}$. Bestvina and Brady \cite{bestvinaandbrady} gave a systematic construction of groups that satisfy some finiteness properties but not others.  Moreover, there are Bestvina--Brady groups that are either counterexamples to the Eilenberg--Ganea Conjecture or counterexamples to the Whitehead Conjecture (\cite{bestvinaandbrady}, Theorem 8.7).

Dison proved that the Dehn functions of Bestvina--Brady groups are bounded above by quartic polynomials \cite{dison}. We are interested in knowing whether all the Bestvina--Brady groups have Dehn functions of the form $n^{\alpha}$, $\alpha=1,2,3$ or $4$. Our motivation is Brady's examples in (\cite{bradyrileyshortthegeometryofthewordproblemforfinitelygeneratedgroups}, Part I), where he gave Bestvina--Brady groups that realize quadratic, cubic, and quartic Dehn functions. In each of those examples, the flag complex on the defining graph $\Gamma$, denoted by $\Delta_{\Gamma}$, is a $2$-dimensional triangulated disk with square boundary. In this paper, we prove that the Dehn functions of Bestvina--Brady groups $H_{\Gamma}$ with such restrictions on the defining graphs $\Gamma$ are of the form $n^{\alpha}$, $\alpha=1,2,3$ or $4$. Furthermore, we provide a way to identify the Dehn functions of those Bestvina--Brady groups by their defining graphs. A simplex of a $2$-dimensional triangulated disk $D$ is called an \emph{interior simplex} if none of its faces are on $\partial D$. If $D$ has interior $d$-simplices, and no other interior $k$-simplices, $k>d$, then we say that $D$ has the \emph{interior dimension} $d$, denoted by $\dim_{I}(D)=d$. If $D$ has no interior simplices, then we define $\dim_{I}(D)=0$. Our main result is the following theorem:

\begin{thm}\label{Dehn functions for square boundary introduction}
Let $\Gamma$ be a finite simplicial graph such that $\Delta_{\Gamma}$ is a $2$-dimensional triangulated disk whose boundary is a square. If $\dim_{I}(\Delta_{\Gamma})=d$ for $d\in\lbrace 0,1,2 \rbrace$, then $\delta_{H_{\Gamma}}(n)\cong n^{d+2}$.
\end{thm}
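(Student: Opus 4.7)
The plan is to split the theorem into lower and upper bounds for each value of $d$, and to combine them with Dison's general quartic upper bound from \cite{dison}. Throughout I would work with the Dicks--Leary style finite presentation $\mathcal{P}_\Gamma$ of $H_\Gamma$, whose generators are oriented edges of $\Gamma$ and whose relators are length-$3$ triangle relators, one per $2$-simplex of $\Delta_\Gamma$. Dison's theorem gives the unconditional upper bound $\delta_{H_\Gamma}(n)\preceq n^4$, so for $d=2$ it remains only to establish a matching quartic lower bound, while for $d\in\{0,1\}$ I additionally need upper bounds $n^2$ and $n^3$ respectively.

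For the lower bounds I would generalize Brady's quadratic, cubic, and quartic examples in \cite{bradyrileyshortthegeometryofthewordproblemforfinitelygeneratedgroups} by a substructure argument. Each of those examples lives on an explicit triangulated square disk $B_d$ with $\dim_{I}(B_d)=d$ and exhibits a family of staircase words $w_n^{(d)}$ of length $\asymp n$ whose Dehn area in $H_{B_d}$ is $\asymp n^{d+2}$. Given $\Gamma$ with $\dim_{I}(\Delta_\Gamma)=d$, I would locate a full subgraph $\Gamma_d\subseteq\Gamma$ whose flag complex realizes the combinatorial pattern of $B_d$ as a subdisk of $\Delta_\Gamma$ centered on an interior $d$-simplex and reaching a corner of $\partial\Delta_\Gamma$, and then transport $w_n^{(d)}$ into $H_\Gamma$. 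To propagate Brady's area lower bound across the inclusion $H_{\Gamma_d}\hookrightarrow H_\Gamma$, I would construct a group retraction $H_\Gamma\to H_{\Gamma_d}$ induced by a graph retraction $\Gamma\to\Gamma_d$ obtained by collapsing complementary vertices onto vertices of $\Gamma_d$, which forces $\delta_{H_{\Gamma_d}}\preceq\delta_{H_\Gamma}$. The $d=0$ case is already implied by non-hyperbolicity: any $2$-simplex of $\Delta_\Gamma$ yields an undistorted $\mathbb{Z}^2$ subgroup of $H_\Gamma$, forcing $\delta_{H_\Gamma}(n)\succeq n^2$.

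For the upper bounds when $d\in\{0,1\}$ I would analyse van Kampen diagrams $\Pi$ over $\mathcal{P}_\Gamma$. Each $2$-cell of $\Pi$ is labeled by a triangle of $\Delta_\Gamma$, and I would quantify its cost by how far that triangle sits from $\partial\Delta_\Gamma$. When $d=0$, every triangle has a vertex on $\partial\Delta_\Gamma$, so each $2$-cell can be resolved along a bounded-length path in $\Delta_\Gamma$ into a corridor anchored at $\partial\Pi$, yielding quadratic area. When $d=1$, no interior triangle exists, so every triangle sits at most one layer from $\partial\Delta_\Gamma$; a shuffle argument across the interior edges, controlled by the two triangles of $\Delta_\Gamma$ flanking each such edge, adds one extra factor of $n$ per layer and yields cubic area. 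I would build on the stack/corridor machinery developed by Dison \cite{dison}, restricted to the specific geometric configurations that arise when $d\le 1$.

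The hardest step will be the cubic upper bound for $d=1$: a naive corridor across an interior edge compounds successive shuffles and only reproduces the generic quartic bound. The delicate point is to exploit the absence of an interior triangle to force the cumulative shuffle length per horizontal layer of $\Pi$ to remain linear rather than quadratic. I expect the technical heart to be a local diagram-surgery lemma that replaces any tall stack of shuffles across an interior edge by a bounded-complexity cap, using the two flanking $\Delta_\Gamma$-triangles to reroute the corridor to $\partial\Delta_\Gamma$ at linear cost.
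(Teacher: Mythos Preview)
Your overall architecture---split into lower and upper bounds per $d$, invoke Dison for the $d=2$ upper bound---matches the paper, but the implementations diverge at every step, and one of your reductions has a genuine gap.

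\textbf{Lower bounds.} You propose to embed Brady's model disk $B_d$ as an induced subgraph $\Gamma_d\subseteq\Gamma$ and transfer the lower bound via a retraction. The paper instead uses the height-pushing map of \cite{abddy} applied directly to an interior $d$-simplex of $\Delta_\Gamma$, which needs no embedding hypothesis. Your retraction idea is sound in principle (the paper records exactly this mechanism in Proposition~\ref{Lower bound on K4 cases introduction}, using the standard RAAG retraction rather than a graph retraction), and for $d=1$ it works because Lemma~\ref{The graph of square boundary and d=1} forces $\Gamma$ to be the suspension of a path, so Brady's $B_1$ sits inside as the suspension of a length-$3$ subpath. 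But for $d=2$ you have not shown, and it is not obvious, that an arbitrary square-boundary disk with an interior triangle contains Brady's eight-vertex $B_2$ as an \emph{induced} subgraph whose flag complex is a subdisk of $\Delta_\Gamma$: the three triangles adjacent to a given interior triangle need not have their apex on $\partial\Delta_\Gamma$, and the surrounding combinatorics is not rigid. The height-pushing argument sidesteps this entirely.

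\textbf{Upper bound, $d=0$.} Rather than analyse van Kampen diagrams, the paper shows $H_\Gamma$ is $\mathrm{CAT}(0)$: it decomposes $\Delta_\Gamma$ into fans and wheels glued along single edges, giving a tree of right-angled Artin groups amalgamated over $\mathbb{Z}$, and then applies \cite[II.11.17]{bridsonandhaefliger}. This is both shorter and more conceptual than a direct filling estimate.

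\textbf{Upper bound, $d=1$.} Here you are closest to the paper in spirit, but you miss the step that makes the corridor analysis tractable. The paper first proves a rigidity result (Lemma~\ref{The graph of square boundary and d=1}): under the hypotheses, $\Gamma$ is \emph{exactly} the suspension of a path $P_k$, $k\ge 3$. Only after fixing the resulting explicit presentation $\{a_ix_i=a_{i+1},\ b_ix_i=b_{i+1}\}$ does the paper run a corridor/stack analysis, tracking how vertex ``types'' propagate between consecutive corridors (Lemmas~\ref{two consecutive corridor with the same orientation}--\ref{two consecutive corridor with different orientations}) to bound the area of a stack by $K(lh^2+h^3)$ (Lemma~\ref{stacks are cubic}). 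Your proposed surgery on tall stacks across an interior edge is aiming at the right object, but without the suspension structure you have no control over how relators interact across successive corridors; your ``linear shuffle per layer'' claim is precisely what the explicit relator set is needed to establish.
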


\begin{example}\label{example of interior dimension}
The flag complexes on the graphs shown in Figure \ref{Disks of interior dimensions $0,1,2$, respectively, and with square boundaries} are $2$-dimensional triangulated disks with square boundary, and they have interior dimensions $0$, $1$, and $2$, respectively. 

\begin{figure}[H]
\begin{center}
\begin{tikzpicture}[scale=0.8]
\draw (-6.5,0)--(-3.5,0);
\draw (-5,2)--(-6.5,0);
\draw (-5,2)--(-3.5,0);
\draw (-5,-2)--(-6.5,0);
\draw (-5,-2)--(-3.5,0);
\draw (-5,-2)--(-5,2);

\draw [fill] (-5,2) circle [radius=0.05];
\draw [fill] (-6.5,0) circle [radius=0.05];
\draw [fill] (-5,0) circle [radius=0.05];
\draw [fill] (-3.5,0) circle [radius=0.05];
\draw [fill] (-5,-2) circle [radius=0.05];

\node [below] at (-5,-2.2) {(a)};

\draw (-1.5,0)--(1.5,0);
\draw (0,2)--(-1.5,0);
\draw (0,2)--(1.5,0);
\draw (0,2)--(-0.5,0);
\draw (0,2)--(0.5,0);
\draw (0,-2)--(-1.5,0);
\draw (0,-2)--(1.5,0);
\draw (0,-2)--(-0.5,0);
\draw (0,-2)--(0.5,0);
		
\draw [fill] (1.5,0) circle [radius=0.05];
\draw [fill] (-1.5,0) circle [radius=0.05];
\draw [fill] (0,2) circle [radius=0.05];
\draw [fill] (0,-2) circle [radius=0.05];
\draw [fill] (-0.5,0) circle [radius=0.05];
\draw [fill] (0.5,0) circle [radius=0.05];
		
\node [below] at (0,-2.2) {(b)};

\draw (3.5,0)--(6.5,0);
\draw (5,2)--(3.5,0)--(5,-2)--(6.5,0)--(5,2);
\draw (5,1)--(3.5,0)--(5,-1)--(6.5,0)--(5,1);
\draw (5,1)--(4.5,0)--(5,-1)--(5.5,0)--(5,1);
\draw (5,2)--(5,1);
\draw (5,-2)--(5,-1);

\draw [fill] (3.5,0) circle [radius=0.05];
\draw [fill] (4.5,0) circle [radius=0.05];
\draw [fill] (5,2) circle [radius=0.05];
\draw [fill] (5,-2) circle [radius=0.05];
\draw [fill] (5.5,0) circle [radius=0.05];
\draw [fill] (6.5,0) circle [radius=0.05];
\draw [fill] (5,1) circle [radius=0.05];
\draw [fill] (5,-1) circle [radius=0.05];
\draw [fill] (4.5,0) circle [radius=0.05];
\draw [fill] (5.5,0) circle [radius=0.05];
\draw [fill] (5,-2) circle [radius=0.05];
		
\node [below] at (5,-2.2) {(c)};
\end{tikzpicture}
\end{center}
\caption{}
\label{Disks of interior dimensions $0,1,2$, respectively, and with square boundaries}
\end{figure}	
\end{example}

We now briefly discuss the proof of Theorem \ref{Dehn functions for square boundary introduction}. When $\dim_{I}(\Delta_{\Gamma})=0$, we can eliminate the square boundary condition on $\Delta_{\Gamma}$; see Theorem \ref{Dehn function of interior dimension 0}. In this case, the Bestvina--Brady group $H_{\Gamma}$ has a graph of groups decomposition, where the edge groups are infinite cyclic, and the vertex groups are right-angled Artin groups. In fact, these Bestvina--Brady groups are $\mathrm{CAT}(0)$, therefore, their Dehn functions are at most quadratic. Since we assume that the flag complexes are triangulated disks, the associated Bestvina--Brady groups have $\mathbb{Z}^{2}$ subgroups. Thus, these Bestvina--Brady groups are not hyperbolic and have at least quadratic Dehn functions. We remark that some cases of $\dim_{I}(\Delta_{\Gamma})=0$ in Theorem \ref{Dehn functions for square boundary introduction} can be obtained by a result of Carter and Forester \cite{carterandforester}:

\begin{thm}\textnormal{(\cite{carterandforester}, Corollary 4.3)}\label{Join of three graphs has quadratic Dehn function}
If a finite simplicial graph $\Gamma$ is a join of three graphs $\Gamma=\Gamma_{1}\ast\Gamma_{2}\ast\Gamma_{3}$, then $\delta_{H_{\Gamma}}$ is quadratic.
\end{thm}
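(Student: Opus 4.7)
The plan is to establish matching quadratic lower and upper bounds on $\delta_{H_{\Gamma}}$. Since $\Gamma=\Gamma_{1}\ast\Gamma_{2}\ast\Gamma_{3}$, the right-angled Artin group decomposes as a direct product $A_{\Gamma}\cong A_{\Gamma_{1}}\times A_{\Gamma_{2}}\times A_{\Gamma_{3}}$, and the Bestvina--Brady homomorphism $\pi\colon A_{\Gamma}\to\mathbb{Z}$ satisfies $\pi(g_{1},g_{2},g_{3})=\epsilon_{1}(g_{1})+\epsilon_{2}(g_{2})+\epsilon_{3}(g_{3})$, where $\epsilon_{i}$ is the exponent-sum map on $A_{\Gamma_{i}}$. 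This product structure is the whole engine of the argument.

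For the lower bound, fix vertices $v_{i}\in V(\Gamma_{i})$ for $i=1,2,3$; they pairwise commute in $A_{\Gamma}$. The elements $a=v_{1}v_{2}^{-1}$ and $b=v_{1}v_{3}^{-1}$ lie in $H_{\Gamma}$, commute with each other, and generate a free abelian subgroup of rank two, since their images in the abelianization of $A_{\Gamma}$ are linearly independent. Because $\Delta_{\Gamma}=\Delta_{\Gamma_{1}}\ast\Delta_{\Gamma_{2}}\ast\Delta_{\Gamma_{3}}$ is simply connected as a nontrivial three-fold join, Bestvina--Brady's criterion implies that $H_{\Gamma}$ is finitely presented. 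A finitely presented group containing $\mathbb{Z}^{2}$ cannot be hyperbolic, so $\delta_{H_{\Gamma}}(n)$ grows at least quadratically.

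For the upper bound, I would work with van Kampen diagrams in the Salvetti complex of $A_{\Gamma}$. Given a null-homotopic word $w$ in $H_{\Gamma}$, regarded as a word of length $n$ on $V(\Gamma)$, the strategy has three steps: first, apply the join commutation relations in a bubble sort to rewrite $w$ as $w_{1}w_{2}w_{3}$ with each $w_{i}$ a word on $V(\Gamma_{i})$, at cost $O(n^{2})$ area; second, since $\pi(w)=0$ gives $\sum_{i}\epsilon_{i}(w_{i})=0$ while the individual exponent sums $k_{i}=\epsilon_{i}(w_{i})$ need not vanish, insert balanced strings of the form $v_{i}v_{j}^{-1}\in H_{\Gamma}$ to transfer exponent among the three blocks, replacing each $w_{i}$ by some $w_{i}'\in H_{\Gamma_{i}}$ at only linear additional area cost; third, fill each $w_{i}'$ using the quadratic isoperimetric inequality in the $\mathrm{CAT}(0)$ Salvetti complex of $A_{\Gamma_{i}}$, contributing $O(n^{2})$ area. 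Summing these contributions yields an $O(n^{2})$ filling for $w$.

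The main obstacle is the middle step: one must check that the exponent can be redistributed among the three blocks at genuinely linear area cost and that the inserted strings interact cleanly with the previously sorted blocks. A subsidiary technical point is that the Dehn function is measured with respect to a finite generating set of $H_{\Gamma}$ (for instance, the edge-based generators $uv^{-1}$), not of $A_{\Gamma}$; since Dehn functions are invariant under change of finite generating set, translating between these presentations introduces only linear distortion and does not disturb the quadratic bound.
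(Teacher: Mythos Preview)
The paper does not prove this statement; it is quoted verbatim as Corollary~4.3 of Carter--Forester and used only as a remark, so there is no in-paper argument to compare your proposal against.

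Your lower bound is fine. The upper bound, however, has a real gap. What you produce is a quadratic-area van Kampen diagram over the presentation of $A_{\Gamma}$: the bubble sort uses the square commutator $2$-cells $[u,v]$ of the Salvetti complex, and the final fillings of the $w_{i}$ take place in the Salvetti complexes of the $A_{\Gamma_{i}}$. But $\delta_{H_{\Gamma}}$ is the Dehn function of $H_{\Gamma}$ with its own presentation (e.g.\ the Dicks--Leary triangle relations), not of the ambient $A_{\Gamma}$. Passing from a filling in $A_{\Gamma}$ to a filling in $H_{\Gamma}$ is not a change of finite generating set within a single group; it is a change of group, and there is no general mechanism that keeps area quadratic under that passage. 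Geometrically, your disk in $\widetilde{X}_{\Gamma}$ climbs to height of order $n$ during the sort, and pushing it back into the level set $Z_{\Gamma}$ costs a height-dependent factor.

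Here is a concrete sanity check that shows the argument, as written, cannot be correct. Nothing in your upper-bound sketch uses the \emph{three}-fold join: the bubble sort and the factor-by-factor filling work identically for a two-fold join $\Gamma=\Gamma_{1}\ast\Gamma_{2}$, since $A_{\Gamma}=A_{\Gamma_{1}}\times A_{\Gamma_{2}}$. If your reasoning were valid it would give $\delta_{H_{\Gamma}}\preceq n^{2}$ for every two-fold join. But the suspension of a path of length at least $3$ is exactly such a join (of two isolated vertices with a path), and the paper proves in Theorem~\ref{Dehn functions for square boundary} (case $d=1$) that its Bestvina--Brady group has $\delta_{H_{\Gamma}}\simeq n^{3}$. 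So the three-fold hypothesis must enter the upper bound in an essential way that your outline does not supply.

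A minor side remark: your ``middle step'' of redistributing exponent is based on a misreading. Since $w$ is trivial in $A_{\Gamma}=\prod_{i}A_{\Gamma_{i}}$, each coordinate word $w_{i}$ is already trivial in $A_{\Gamma_{i}}$, hence $\epsilon_{i}(w_{i})=0$ automatically; no balancing is needed.
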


\begin{example}\label{Example of Carter and Forester}
Let $\Gamma$ be the graph in Figure \ref{Disks of interior dimensions $0,1,2$, respectively, and with square boundaries} (a). Label the vertices as follows:
\begin{center}
\begin{tikzpicture}[scale=0.7]
\draw (-6.5,0)--(-3.5,0);
\draw (-5,2)--(-6.5,0);
\draw (-5,2)--(-3.5,0);
\draw (-5,-2)--(-6.5,0);
\draw (-5,-2)--(-3.5,0);
\draw (-5,-2)--(-5,2);

\draw [fill] (-5,2) circle [radius=0.05];
\draw [fill] (-6.5,0) circle [radius=0.05];
\draw [fill] (-5,0) circle [radius=0.05];
\draw [fill] (-3.5,0) circle [radius=0.05];
\draw [fill] (-5,-2) circle [radius=0.05];

\node [above] at (-5,2) {$a$};
\node [left] at (-6.5,0) {$b$};
\node [below right] at (-5,0) {$c$}; 
\node [right] at (-3.5,0) {$d$};
\node [below] at (-5,-2) {$e$};
\end{tikzpicture}
\end{center}
Let $\Gamma_{1}=\lbrace c\rbrace$, $\Gamma_{2}=\lbrace b,d\rbrace$, and $\Gamma_{3}=\lbrace a,e\rbrace$, then $\Gamma=\Gamma_{1}\ast\Gamma_{2}\ast\Gamma_{3}$. Therefore, $\delta_{H_{\Gamma}}$ is quadratic by Theorem \ref{Join of three graphs has quadratic Dehn function}. It also follows from Theorem \ref{Dehn functions for square boundary introduction} that $\delta_{H_{\Gamma}}$ is quadratic.
\end{example}

In \cite{bradyrileyshortthegeometryofthewordproblemforfinitelygeneratedgroups}, Brady proved that the Bestvina--Brady group on the graph shown in Figure \ref{Disks of interior dimensions $0,1,2$, respectively, and with square boundaries} ~(b) has a cubic Dehn function. This graph can be seen as the suspension of a path of length ~$3$. When $\dim_{I}(\Delta_{\Gamma})=1$, we prove that $\Gamma$ is the suspension of a path, and we show that the associated Bestvina--Brady group has a cubic Dehn function. To achieve the cubic upper bound, we use the \emph{corridor schemes} \cite{bradyandforester} to analyse the van Kampen diagrams carefully. Lemma \ref{stacks are cubic} is the main technical result of this paper. In this technical lemma, we prove that the area of a special region, called a \emph{stack}, in a van Kampen diagram is bounded above by a cubic function of the perimeter of that region. This result allows us to obtain the desired cubic upper bound; see Lemma \ref{The suspension of a path of length at least 3 is at most cubic} for a detail proof. 

Since there is a universal quartic upper bound on the Dehn functions of Bestvina--Brady groups, the remaining cases are the cubic and quartic lower bounds for $\dim_{I}(\Delta_{\Gamma})=1,2$, respectively. In \cite{abddy}, the authors introduced the \emph{height-pushing map} to obtain the lower bound on the higher Dehn functions of orthoplex groups. Their method can be adapted to our proof to obtain the desired lower bounds. We want to point out that their theorem (\cite{abddy}, Theorem ~5.1) recovers Dison's quartic upper bound in \cite{dison}. We have discussed all the cases of our main result Theorem \ref{Dehn functions for square boundary introduction}.

Denote $K_{4}$ to be the complete graph on four vertices. Note that the assumption of the flag complex $\Delta_{\Gamma}$ being $2$-dimensional is equivalent to saying that the graph $\Gamma$ does not have $K_{4}$ subgraphs. Suppose a given graph $\Gamma$ whose flag complex is a triangulated disk but not necessarily $2$-dimensional or has square boundary. If $\Gamma$ contains a subgraph that satisfies the assumptions of Theorem \ref{Dehn functions for square boundary introduction}, then we can obtain a lower bound on the Dehn function of the associated Bestvina--Brady group $H_{\Gamma}$:

\begin{prop}\label{Lower bound on K4 cases introduction}
Let $\Gamma$ be a finite simplicial graph such that $\Delta_{\Gamma}$ is simply-connected. If $\Gamma$ contains an induced subgraph $\Gamma'$ such that $\Delta_{\Gamma'}$ is a $2$-dimensional triangulated subdisk of $\Delta_{\Gamma}$ that has square boundary and $\dim_{I}(\Delta_{\Gamma'})=d$ for $d\in\lbrace 0,1,2 \rbrace$, then $n^{d+2}\preceq\delta_{H_{\Gamma}}(n)$.
\end{prop}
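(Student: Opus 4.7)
The plan is to leverage Theorem \ref{Dehn functions for square boundary introduction} applied to the subgraph $\Gamma'$, which yields $\delta_{H_{\Gamma'}}(n) \succeq n^{d+2}$, and then transfer this lower bound from $H_{\Gamma'}$ to the ambient group $H_\Gamma$.

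The first step is to realize $H_{\Gamma'}$ as a subgroup of $H_\Gamma$. Because $\Gamma'$ is an induced subgraph of $\Gamma$, the inclusion on vertex sets extends to an injective homomorphism $A_{\Gamma'} \hookrightarrow A_\Gamma$ that commutes with the two height homomorphisms to $\mathbb{Z}$, and so restricts to an injection $\iota \colon H_{\Gamma'} \hookrightarrow H_\Gamma$. I would then take the family of loops $\{\gamma_n\} \subset H_{\Gamma'}$ produced in the proof of Theorem \ref{Dehn functions for square boundary introduction} that realizes the lower bound, so that $|\gamma_n| = O(n)$ and $\mathrm{Area}_{H_{\Gamma'}}(\gamma_n) \succeq n^{d+2}$. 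Under $\iota$, these become loops in $H_\Gamma$ of length $O(n)$, and it remains to prove $\mathrm{Area}_{H_\Gamma}(\iota(\gamma_n)) \succeq n^{d+2}$.

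The main step is to adapt the height-pushing argument of \cite{abddy}---the same machinery used to obtain the lower bounds in Theorem \ref{Dehn functions for square boundary introduction}---to van Kampen diagrams in $H_\Gamma$. Given a diagram $D$ filling $\iota(\gamma_n)$ in $H_\Gamma$, the goal is to produce from $D$ a diagram $D'$ filling $\gamma_n$ in $H_{\Gamma'}$ with $\mathrm{Area}(D') \leq C \cdot \mathrm{Area}(D)$ for a universal constant $C$. Combined with the lower bound on $\mathrm{Area}_{H_{\Gamma'}}(\gamma_n)$, this immediately gives the desired bound on $\mathrm{Area}_{H_\Gamma}(\iota(\gamma_n))$. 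The argument is essentially a projection in the Salvetti complex of $A_\Gamma$: one lifts $D$ to the universal cover and projects it back onto the sub-Salvetti corresponding to $A_{\Gamma'}$, which is a convex subcomplex.

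The hardest part will be handling cells of $\Delta_\Gamma$ that lie outside the subdisk $\Delta_{\Gamma'}$, in particular higher-dimensional simplices arising from $K_4$-subgraphs of $\Gamma$ that are not contained in $\Gamma'$. Because $\Gamma'$ is an induced subgraph, any simplex of $\Delta_\Gamma$ whose vertex set lies entirely in $\Gamma'$ already belongs to $\Delta_{\Gamma'}$; this fullness condition should allow the height-pushing to proceed on cells ``near'' $\Delta_{\Gamma'}$ unobstructed, and to absorb external cells as bounded local corrections that inflate the area by at most a constant factor. Making this surgery precise---verifying that the modified diagram $D'$ still closes up along $\gamma_n$ and remains in $H_{\Gamma'}$ without more than a constant-factor area blow-up---will constitute the technical heart of the proof.
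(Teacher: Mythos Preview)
Your overall strategy---compute $\delta_{H_{\Gamma'}}\succeq n^{d+2}$ via Theorem~\ref{Dehn functions for square boundary introduction} and then transfer this to $H_\Gamma$---is exactly the paper's. However, you are working far too hard in the transfer step, and mixing in machinery (height-pushing) that is not needed here.

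The paper's observation is that $H_{\Gamma'}$ is not merely a subgroup of $H_\Gamma$ but a \emph{retract}. Because $\Gamma'$ is an induced subgraph, the map $A_\Gamma\to A_{\Gamma'}$ that kills the generators outside $\Gamma'$ is a retraction of right-angled Artin groups, and it commutes with the height maps to $\mathbb{Z}$, hence restricts to a retraction $r\colon H_\Gamma\to H_{\Gamma'}$. Once you have a retract, Brick's lemma (\cite{brick}, Lemma~2.2) gives $\delta_{H_{\Gamma'}}\preceq\delta_{H_\Gamma}$ immediately: push any van~Kampen diagram over $H_\Gamma$ through $r$. That is the entire proof.

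Your ``projection onto the sub-Salvetti for $A_{\Gamma'}$'' \emph{is} this retraction, viewed geometrically; recognizing it as a group homomorphism eliminates all the surgery you anticipate. The ``hardest part'' you identify---cells of $\Delta_\Gamma$ outside $\Delta_{\Gamma'}$, including higher simplices from $K_4$'s---is handled automatically: the retraction sends any relator of $H_\Gamma$ involving a generator outside $\Gamma'$ to a word that is already trivial (or a relator) in $H_{\Gamma'}$, with area increase bounded by a constant depending only on the presentations. No diagram-by-diagram analysis is required, and in particular the height-pushing map of \cite{abddy} plays no role in this step; it was used only to establish the lower bound on $\delta_{H_{\Gamma'}}$ itself, which you are already taking as input.
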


This paper is organized as follows. Section \ref{section preliminaries} provides some necessary background. In Section \ref{section disk with interior dimension 0}, we prove the case of $\dim_{I}(\Delta_{\Gamma})=0$ without the square boundary assumption. Section \ref{section disks with square boundaries} is devoted to the proof of Theorem \ref{Dehn functions for square boundary introduction}. In Section \ref{section graphs with K4 subgraphs}, we prove Proposition \ref{Lower bound on K4 cases introduction}.

\section*{acknowledgements}

I want to thank Pallavi Dani for introducing me to this project and her generous advice. I want to thank Tullia Dymarz, Max Forester, and Bogdan Oporowski for many helpful conversations and their support. I want to thank the referee for valuable suggestions and feedback.

\section{preliminaries}\label{section preliminaries}

\subsection{Dehn functions}\label{subsection Dehn functions}

Let $G$ be a group with a finite presentation $\mathcal{P}=\langle\mathcal{S}\vert\mathcal{R}\rangle$. Let $w$ be a word in $F(\mathcal{S})$ that represents the identity of $G$, denoted by $w\equiv_{G}1$. The $\emph{area}$ of $w$, denoted by Area$(w)$, is defined as follows:
\begin{align*}
\text{Area}(w)=\min
\bigg\lbrace N \ \bigg\vert w\overset{F(\mathcal{S})}{=}\prod^{N}_{i=1}x_{i}r^{\pm 1}_{i}x^{-1}_{i}, x_{i}\in F(\mathcal{S}), r_{i}\in\mathcal{R}\bigg\rbrace,
\end{align*}
where $F(\mathcal{S})$ is the free group generated by $S$. The \emph{Dehn function} $\delta_{G}:\mathbb{N}\rightarrow\mathbb{N}$ of a group $G$ over the presentation $\mathcal{P}=\langle\mathcal{S}\vert\mathcal{R}\rangle$ is defined by 
\begin{align*}
\delta_{\mathcal{P}}(n)=\max\bigg\lbrace\text{Area}_{G}(w)\ \bigg\vert \ w\equiv_{G}1, |w|\leq n\bigg\rbrace,
\end{align*}
where $|w|$ denotes the length of the word $w$.

\begin{definition}\label{equivalence of functions}
Let $f,g:[0,\infty)\rightarrow[0,\infty)$ be two functions. We say that $f$ is \emph{bounded above} by $g$, denoted by $f\preceq g$, if there is a number $C>0$ such that $f(n)\leq Cg(Cn+C)+Cn+C$ for all $n>0$. We say that $f$ and $g$ are \emph{$\simeq$-equivalent}, or simply \emph{equivalent}, denoted by $f\simeq g$, if $f\preceq g$ and $g\preceq f$. 
\end{definition}

If $\mathcal{P}_{1}$ and $\mathcal{P}_{2}$ are finite presentations of a group $G$, then $\delta_{\mathcal{P}_{1}}$ is equivalent to $\delta_{\mathcal{P}_{2}}$; we refer to \cite{bridsongeometryofwordproblem} for a proof of this fact. We denote the $\simeq$-equivalent class of $G$ by $\delta_{G}$, and call it the \emph{Dehn function} of $G$. We say that the Dehn function $\delta_{G}$ is \emph{linear}, \emph{quadratic}, \emph{cubic} or \emph{quartic} if for all $n\in\mathbb{N}$, $\delta_{G}(n)\simeq n$, $\delta_{G}(n)\simeq n^{2}$, $\delta_{G}(n)\simeq n^{3}$ or $\delta_{G}(n)\simeq n^{4}$, respectively.

\subsection{Right-angled Artin groups, Bestvina--Brady groups, and the Dicks--Leary presentation}\label{subsection RAAG, BBG, DLP}

Let $\Gamma$ be a finite simplicial graph and $\mathrm{V}(\Gamma)$ the set of vertices of $\Gamma$. The \emph{right-angled Artin group} $A_{\Gamma}$ associated to $\Gamma$ has the following presentation:
\begin{align*}
A_{\Gamma}=\bigg\langle\text{V}(\Gamma) \ \bigg\vert \ [v_{i},v_{j}] \ \text{whenever $v_{i}$ and $v_{j}$ are connected by an edge of $\Gamma$}\bigg\rangle.
\end{align*}
When $\Gamma$ is a complete graph $K_{n}$ on $n$ vertices, $A_{\Gamma}=\mathbb{Z}^{n}$; when $\Gamma$ is a set of $n$ distinct points, $A_{\Gamma}=F_{n}$, the free group of rank $n$.

For each finite simplicial graph $\Gamma$, its associated right-angled Artin group $A_{\Gamma}$ is the fundamental group of a cubical complex $X_{\Gamma}$, called the \emph{Salvetti complex}. It is well-known that the Salvetti complex $X_{\Gamma}$ is compact and non-positively curved, and its universal cover $\widetilde{X}_{\Gamma}$ is a $\mathrm{CAT}(0)$ cube complex. Moreover, right-angled Artin groups are $\mathrm{CAT}(0)$ groups; thus, they have at most quadratic Dehn functions. We refer to  \cite{charneyanintroductiontoraag} for more details of these facts.

Given a finite simplicial graph $\Gamma$, we define a group homomorphism $\phi:A_{\Gamma}\rightarrow\mathbb{Z}$ by sending all the generators of $A_{\Gamma}$ to $1$. The kernel of this homomorphism is called the \emph{Bestvina--Brady group} defined by $\Gamma$, and is denoted by $H_{\Gamma}$. In fact, the map $\phi:A_{\Gamma}\rightarrow\mathbb{Z}$ is induced by $l:X_{\Gamma}\rightarrow S^{1}$, and the lift of $l$, denoted by $h:\widetilde{X}_{\Gamma}\rightarrow\mathbb{R}$ is a $\phi$-equivariant Morse function; see \cite{bestvinaandbrady}, Theorem 5.12. Restricting the action of $A_{\Gamma}$ on $\widetilde{X}_{\Gamma}$ to $H_{\Gamma}$, we obtain a geometric action of $H_{\Gamma}$ on the zero level set $Z_{\Gamma}=h^{-1}(0)$.

The \emph{flag complex $\Delta_{\Gamma}$} on a finite simplicial graph $\Gamma$ is a simplicial complex such that each complete subgraph $K_{n}$ of $\Gamma$ spans an $(n-1)$-simplex in $\Delta_{\Gamma}$. When $\Delta_{\Gamma}$ is connected, $H_{\Gamma}$ is finitely generated; when $\Delta_{\Gamma}$ is simply-connected, $H_{\Gamma}$ is finitely presented; see \cite{bestvinaandbrady} for the proof of these facts. When $H_{\Gamma}$ is finitely presented, we can write down its  \emph{Dicks--Leary presentation} \cite{dicksandleary}: 

\begin{thm}\textnormal{(\cite{dicksandleary}, Corollary 3)}\label{Dicks and Leary presentation}
Let $\Gamma$ be a finite simplicial oriented graph. Suppose that $\Delta_{\Gamma}$ is simply-connected. Then the Bestvina--Brady group $H_{\Gamma}$ has the following finite presentation:
\begin{align*}
H_{\Gamma}=\bigg\langle\overline{E}(\Gamma) \ \bigg\vert \ ef=g=fe \ \text{whenever $e,f,g$ form an oriented triangle }\bigg\rangle,
\end{align*}
where $\overline{E}(\Gamma)$ is the set of oriented edges of $\Gamma$, and the oriented triangle is shown in Figure \ref{oriented triangle}
\begin{figure}[H]
\begin{center}
\begin{tikzpicture}[scale=0.5]
\draw [middlearrow={stealth}] (4,3)--(0,0);
\draw [middlearrow={stealth}] (0,0)--(6,0);
\draw [middlearrow={stealth}] (4,3)--(6,0);

\node [left] at (2,2) {$e$};
\node at (3,-1) {$f$};
\node [right] at (5,2) {$g$};

\end{tikzpicture}
\end{center}
\caption{Realator of the Dicks--Leary presentation.}
\label{oriented triangle}
\end{figure}
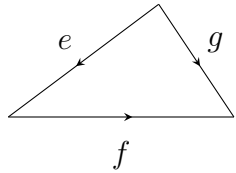
\end{thm}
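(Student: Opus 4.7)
My plan is to realize the Dicks--Leary presentation as the fundamental group of a natural $2$-complex arising from the geometric action of $H_{\Gamma}$ on the zero level set $Z_{\Gamma}$. The candidate homomorphism $\psi$ sends each oriented edge $e : u \to v$ of $\Gamma$ to $u^{-1}v \in A_{\Gamma}$, which lies in $H_{\Gamma}$ since $\phi(u^{-1}v) = 0$. For a triangle with oriented edges $e : u \to v$, $f : v \to w$, $g : u \to w$ as in Figure \ref{oriented triangle}, the pairwise commutation of $u, v, w$ in $A_{\Gamma}$ gives
\[
u^{-1}v \cdot v^{-1}w = u^{-1}w \qquad \text{and} \qquad v^{-1}w \cdot u^{-1}v = u^{-1}w,
\]
so the two triangle relations $ef = g = fe$ hold in $H_{\Gamma}$ and $\psi$ descends to a well-defined homomorphism from the group $\widetilde{H}$ defined by the stated presentation.

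For the rest, I would use that $Z_{\Gamma}$ is simply-connected by \cite{bestvinaandbrady} (since $\Delta_{\Gamma}$ is) and carries a free cocompact action by $H_{\Gamma}$, and I would endow it with the CW structure inherited from the cubical structure on $\widetilde{X}_{\Gamma}$ via the linear Morse function $h$. The $0$-skeleton is a single free $H_{\Gamma}$-orbit consisting of all height-$0$ vertices, since any two such $p, p'$ satisfy $p^{-1}p' \in H_{\Gamma}$. Each $2$-cube over an edge of $\Gamma$ contributes a diagonal $1$-cell from $pu$ to $pv$ (where $h(p) = -1$) representing multiplication by $u^{-1}v$, so the $1$-skeleton of $Z_{\Gamma}$ is exactly the Cayley graph of $H_{\Gamma}$ with respect to the edge elements, in bijection with $\overline{E}(\Gamma)$; connectedness of $Z_{\Gamma}$ then forces $\psi$ to be surjective. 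Each $3$-cube over a $2$-simplex $\{u,v,w\}$ of $\Delta_{\Gamma}$ contributes two triangular $2$-cells from the cross-sections $a+b+c = 1$ and $a+b+c = 2$ inside cube coordinates, becoming the relator disks in $Z_{\Gamma}/H_{\Gamma}$.

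The principal obstacle, and the step I expect to dominate the proof, is reading off the boundary words of these two triangles in a consistent orientation. For the lower triangle with vertices $(pu, pv, pw)$, the three sides are the diagonals of the three boundary $2$-cubes, labeled respectively $e$, $f$, $g$, yielding boundary word $efg^{-1}$, i.e.\ $ef = g$. For the upper triangle with vertices $(p'uv, p'uw, p'vw)$, a direct computation using the pairwise commutation of $u, v, w$ (for instance $(p'uv)^{-1}(p'uw) = v^{-1}w$) identifies the three sides with labels $f$, $e$, $g$ in order, yielding boundary word $feg^{-1}$, i.e.\ $fe = g$. These are precisely the two Dicks--Leary triangle relations, so $Z_{\Gamma}/H_{\Gamma}$ is a presentation $2$-complex for $H_{\Gamma}$ whose fundamental group is given by the stated presentation, proving that $\psi$ is an isomorphism.
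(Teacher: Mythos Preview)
The paper does not prove this theorem; it is quoted from Dicks--Leary \cite{dicksandleary} as background in Section~\ref{section preliminaries}, so there is no in-paper proof to compare against. Your argument---identifying the $2$-skeleton of $Z_{\Gamma}/H_{\Gamma}$ as a presentation $2$-complex for $H_{\Gamma}$ via the simple connectivity of $Z_{\Gamma}$ and the free cocompact $H_{\Gamma}$-action---is the standard route and is correct. The identification of the two triangular $2$-cells per $2$-simplex of $\Delta_{\Gamma}$, coming from the cross-sections $a+b+c=1$ and $a+b+c=2$ of a $3$-cube, with boundary words $efg^{-1}$ and $feg^{-1}$ respectively, is exactly right; your computation $(p'uv)^{-1}(p'uw)=v^{-1}w$ etc.\ using pairwise commutation of $u,v,w$ checks out.

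One small point worth stating explicitly: you are tacitly using that $\pi_{1}$ of a CW complex depends only on its $2$-skeleton, so that when $\Delta_{\Gamma}$ contains simplices of dimension $\geq 3$ (and hence $Z_{\Gamma}/H_{\Gamma}$ has higher cells), the presentation read off from the $2$-skeleton is still complete. Also, the sentence ``the $1$-skeleton of $Z_{\Gamma}$ is exactly the Cayley graph of $H_{\Gamma}$'' is a slight overstatement---it is the Cayley graph with respect to the edge generators only after you check there are no extra $1$-cells, which follows because a $1$-cell of $Z_{\Gamma}$ can only arise as the diagonal of a $2$-cube. These are minor clarifications; the proof is sound.
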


We will use the fact that $H_{\Gamma}$ is finitely presented when $\Delta_{\Gamma}$ is simply-connected without specifying an orientation on $\Gamma$. We will give an orientation on $\Gamma$ when we need to work with a finite presentation for $H_{\Gamma}$. The Dicks--Leary presentation can be reduced further:

\begin{cor}\textnormal{(\cite{stefansuciualgebraicinvariantsforbbgroups}, Corollary 2.3)}\label{Generating set of BB}
If the flag complex on a finite simplicial graph $\Gamma$ is simply-connected, then $H_{\Gamma}$ has a presentation $H_{\Gamma}=F/R$, where $F$ is the free group generated by the edges in a maximal tree of $\Gamma$, and $R$ is a finitely generated normal subgroup of the commutator group $[F,F]$. 
\end{cor}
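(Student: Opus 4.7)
The plan is to start from the Dicks--Leary presentation in Theorem~\ref{Dicks and Leary presentation} and perform Tietze transformations to reduce the generating set to the edges of a spanning tree $T$ of $\Gamma$. Such a tree exists because $\Delta_{\Gamma}$ simply-connected forces $\Gamma$ to be connected. The argument then splits into two phases: eliminate the non-tree edges, and afterwards verify that the rewritten relators all lie in the commutator subgroup.

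For the first phase, given an oriented non-tree edge $e$ with source $u$ and target $v$, let $w_{e}$ be the word in oriented tree edges spelling the unique path in $T$ from $u$ to $v$. The loop in $\Gamma$ traced by $e\cdot w_{e}^{-1}$ is null-homotopic in $\Delta_{\Gamma}$, so it can be filled by a van Kampen diagram whose $2$-cells are triangles of $\Delta_{\Gamma}$. This shows that $e\cdot w_{e}^{-1}$ is a consequence of the Dicks--Leary relators, so the relation $e = w_{e}$ already holds in $H_{\Gamma}$. A Tietze move then adjoins this relation and removes $e$ from the generating set. Performing this for every non-tree edge yields a presentation $H_{\Gamma} = F/R$, where $F$ is the free group on the edges of $T$ and $R$ is the normal closure of the finitely many Dicks--Leary relators rewritten in terms of tree edges. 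In particular $R$ is finitely generated as a normal subgroup.

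For the second phase, let $\widetilde{\mathbb{Z}}^{V(\Gamma)}\subset \mathbb{Z}^{V(\Gamma)}$ denote the augmentation ideal, and define a homomorphism $\nu\colon F\to \widetilde{\mathbb{Z}}^{V(\Gamma)}$ by sending each oriented tree edge $e\colon u\to v$ to $[v]-[u]$. Because $T$ is a spanning tree, the family $\{\nu(e):e\in E(T)\}$ is a $\mathbb{Z}$-basis of $\widetilde{\mathbb{Z}}^{V(\Gamma)}$, so $\nu$ factors through $F^{\mathrm{ab}}$ and induces an isomorphism $F^{\mathrm{ab}}\xrightarrow{\sim}\widetilde{\mathbb{Z}}^{V(\Gamma)}$. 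For any edge $e\colon u\to v$ of $\Gamma$, the rewritten word $w_{e}$ satisfies $\nu(w_{e})=[v]-[u]$, because it traces a path from $u$ to $v$ and the differences telescope. Consequently, for any oriented triangle of $\Delta_{\Gamma}$ with edges $e\colon u\to v$, $f\colon v\to w$, $g\colon u\to w$, the rewritten relators $w_{e}w_{f}w_{g}^{-1}$ and $w_{f}w_{e}w_{g}^{-1}$ both have $\nu$-image
\[
([v]-[u])+([w]-[v])-([w]-[u]) \;=\; 0.
\]
Since $\nu$ is injective on $F^{\mathrm{ab}}$, each such relator lies in $[F,F]$, and therefore $R\subseteq[F,F]$.

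The delicate step is the first phase: one must justify that a null-homotopic loop in $\Delta_{\Gamma}$ really does yield a valid sequence of Dicks--Leary moves converting $e$ to $w_{e}$, and that this can be done consistently across all non-tree edges simultaneously. Once that is in place, the second phase is essentially bookkeeping with the map $\nu$.
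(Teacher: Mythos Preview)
The paper does not supply its own proof of this corollary; it is simply quoted from Papadima--Suciu. Your argument is correct and follows the expected route: eliminate the non-tree generators from the Dicks--Leary presentation by Tietze moves, then verify via the boundary homomorphism $\nu$ that every rewritten triangle relator dies in the abelianization $F^{\mathrm{ab}}$ and hence lies in $[F,F]$.

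One remark on your Phase~1. The van~Kampen filling argument works, but you can shortcut the ``delicate step'' entirely. Theorem~\ref{Dicks and Leary presentation} already identifies the abstractly presented group with the concrete subgroup $H_{\Gamma}\leq A_{\Gamma}$, and under that identification the oriented edge $e\colon u\to v$ corresponds to $uv^{-1}\in A_{\Gamma}$. Any edge-path from $u$ to $v$ then telescopes in $A_{\Gamma}$ to $uv^{-1}$, so $e=w_{e}$ holds in $H_{\Gamma}$ automatically and the Tietze elimination is immediately justified. Simple-connectivity of $\Delta_{\Gamma}$ is already baked into the hypothesis of the Dicks--Leary theorem and need not be invoked again.
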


While Dehn functions of right-angled Artin groups are at most quadratic, Dison \cite{dison} proved that Dehn functions of Bestvina--Brady groups are bounded above by quartic functions. 

\begin{thm}\textnormal{(\cite{dison})}
Dehn functions of Bestvina--Brady groups are at most quartic.
\end{thm}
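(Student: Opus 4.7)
The plan is to bound the area of a null-homotopic word $w$ of length $n$ in $H_\Gamma$ by first filling it cheaply in the ambient right-angled Artin group $A_\Gamma$ and then pushing the resulting filling into the zero level set $Z_\Gamma$, using the Dicks--Leary presentation of Theorem \ref{Dicks and Leary presentation} after fixing an orientation on $\Gamma$. The starting observation is that $w$ traces a closed edge path $\gamma$ of length $O(n)$ in the $1$-skeleton of $Z_\Gamma$, and the inclusion $Z_\Gamma \hookrightarrow \widetilde{X}_\Gamma$ turns $\gamma$ into a null-homotopic loop in the Salvetti complex.

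First I would invoke the quadratic Dehn function of $A_\Gamma$ to obtain a van Kampen diagram $D$ for $\gamma$ in the Salvetti presentation with at most $C n^2$ square $2$-cells. Realising $D$ as a combinatorial disk inside $\widetilde{X}_\Gamma$, every vertex carries an integer height via the Morse function $h$; boundary vertices have height $0$, while interior vertices have heights in $[-n, n]$. The difficulty is that the interior $2$-cells of $D$ may live at arbitrary heights within this range.

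Next I would introduce a height-pushing procedure. Each square $2$-cell of $\widetilde{X}_\Gamma$ at height level $[k, k+1]$ sits inside a $3$-cube whenever the edge of $\Gamma$ defining it is contained in some triangle of $\Delta_\Gamma$; this cube provides, via its other two square faces and two Dicks--Leary triangular relators, a way to replace the square by an equivalent configuration one height level lower. Iterating the replacement pushes every $2$-cell of $D$ down to height $0$. The cost of pushing a single square from height $k$ to height $0$ is $O(|k|) = O(n)$ Dicks--Leary relators, giving a preliminary estimate of $O(n^2) \cdot O(n) = O(n^3)$. The extra factor of $n$ needed to reach $O(n^4)$ comes from reconciling pushes of adjacent squares: when two neighbouring squares are pushed by different amounts, a staircase of intermediate loops appears along their shared edge, and filling each such loop via a secondary RAAG application and another round of pushing contributes the final $O(n)$ factor. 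Summing yields a Dicks--Leary van Kampen diagram for $w$ of area at most $O(n^4)$.

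The main obstacle will be making the height-pushing and reconciliation precise enough to close at the quartic level. One must verify that every intermediate reconciliation loop has length $O(n)$, that its own secondary filling can be height-pushed without further cascades, and that edges of $\Gamma$ whose defining squares do not lie in any triangle cause no obstruction, which uses the simple-connectedness of $\Delta_\Gamma$ in an essential way. The rigorous control of this two-layer accounting is the heart of Dison's argument and is what guarantees that the bound stabilises at quartic rather than diverging to a higher polynomial.
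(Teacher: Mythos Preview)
The paper does not supply a proof of this theorem; it is quoted as a result of Dison \cite{dison} and used as a black box (in particular for the upper bound in the $d=2$ case of Theorem~\ref{Dehn functions for square boundary}). There is therefore no proof in the paper to compare your sketch against.

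For what it is worth, your outline is closer in spirit to the height-pushing approach of \cite{abddy}, which the paper notes (at the end of the introduction) recovers Dison's bound, than to Dison's original argument via area--radius pairs. As a sketch it points in the right direction, but the accounting is muddled: the ``preliminary'' $O(n^3)$ estimate is not actually justified, and the extra factor of $n$ does not arise from a separate reconciliation layer in the way you describe. In the height-pushing picture of Theorem~\ref{height-pushing map}, the retraction has Lipschitz constant $O(n)$ on $h^{-1}([-n,n])$, so the image of each of the $O(n^2)$ two-cells of the RAAG filling has area $O(n^2)$, and one obtains $O(n^4)$ directly; there is no intermediate $O(n^3)$ stage to worsen.
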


\subsection{Interior dimensions}

Let $D$ be a triangulated disk. An \emph{interior $i$-simplex} of $D$ is an $i$-simplex whose faces do not intersect $\partial D$. We also call an interior $0$-simplex an \emph{interior vertex}, an interior $1$-simplex an \emph{interior edge}, and an interior $2$-simplex an \emph{interior triangle}. We say that $D$ has \emph{interior dimension $d$}, denoted by $\dim_{I}(D)=d$, if $D$ contains interior $d$-simplices, and it has no interior $k$-simplices, $k>d$. If $D$ contains no interior simplices, then we define $\dim_{I}(D)=0$. We refer to Example \ref{example of interior dimension} for concrete examples.

\section{Disks with interior dimension $0$}\label{section disk with interior dimension 0}

In this section, we prove the following theorem.

\begin{thm}\label{Dehn function of interior dimension 0}
Let $\Gamma$ be a finite simplicial graph. If $\Delta_{\Gamma}$ is a $2$-dimensional triangulated disk satisfying $\dim_{I}(\Delta_{\Gamma})=0$, then $\delta_{H_{\Gamma}}(n)\simeq n^{2}$. 
\end{thm}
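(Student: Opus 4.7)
My plan is to obtain matching quadratic upper and lower bounds on $\delta_{H_\Gamma}$.

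\textbf{Lower bound.} Since $\Delta_\Gamma$ is $2$-dimensional it contains a $2$-simplex $\{a,b,c\}$, giving an embedding $A_{\{a,b,c\}}\cong\mathbb{Z}^3\hookrightarrow A_\Gamma$. Intersecting with $H_\Gamma$ yields $\ker(\mathbb{Z}^3\to\mathbb{Z})\cong\mathbb{Z}^2\leq H_\Gamma$, so $H_\Gamma$ contains a $\mathbb{Z}^2$ subgroup and is not word-hyperbolic. The Gromov/Olshanskii--Bowditch subquadratic isoperimetric gap (any finitely presented group with a subquadratic Dehn function is hyperbolic) then gives $n^2\preceq\delta_{H_\Gamma}(n)$.

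\textbf{Upper bound.} Following the outline in the introduction, the plan is to show $H_\Gamma$ is a $\mathrm{CAT}(0)$ group, which gives $\delta_{H_\Gamma}(n)\preceq n^2$ by the standard quadratic isoperimetric inequality for $\mathrm{CAT}(0)$ groups. I will exhibit $H_\Gamma$ as the fundamental group of a finite graph of groups with infinite cyclic edge groups and right-angled Artin vertex groups, built inductively on the number of $2$-simplices of $\Delta_\Gamma$. The hypothesis $\dim_I(\Delta_\Gamma)=0$ says that every edge of $\Gamma$ meets $\partial\Delta_\Gamma$ and that the interior vertices of $\Delta_\Gamma$ are pairwise non-adjacent in $\Gamma$; combined with $2$-dimensionality (no $K_4$ subgraph), this constrains the combinatorics enough to always allow peeling off one of two kinds of pieces along a single edge $\{u,w\}$ of $\Gamma$: (i) a boundary ear triangle with two edges on $\partial\Delta_\Gamma$, in which case $A_\Gamma=A_{\Gamma'}\ast_{\mathbb{Z}^2}\mathbb{Z}^3$ and the BB-kernel gives $H_\Gamma=H_{\Gamma'}\ast_\mathbb{Z}\mathbb{Z}^2$; or (ii) the closed star $\{v\}\ast\mathrm{lk}(v)$ of an interior vertex $v$, which is a cone on its link and therefore, by the identity $H_{v\ast\Lambda}\cong A_\Lambda$ (a direct computation using $A_{\{v\}\ast\Lambda}\cong\mathbb{Z}\times A_\Lambda$), contributes a bona fide RAAG vertex group $A_{\mathrm{lk}(v)}$ to the decomposition. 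In both cases the amalgamating subgroup is $\langle uw^{-1}\rangle\cong\mathbb{Z}$, and inductively the complementary subdisk's BB-group is $\mathrm{CAT}(0)$.

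To assemble these pieces into a $\mathrm{CAT}(0)$ model space, glue the $\mathrm{CAT}(0)$ Salvetti/cube complexes of the vertex groups along the common geodesic axis realizing the amalgamating $\mathbb{Z}$; that axis sits inside a flat $\mathbb{R}^2$ in each vertex space because $u$ and $w$ commute in both sides. Gluing $\mathrm{CAT}(0)$ spaces along an isometrically embedded geodesic preserves the $\mathrm{CAT}(0)$ property, so $H_\Gamma$ acts properly and cocompactly on a $\mathrm{CAT}(0)$ space, completing the upper bound.

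\textbf{Main obstacle.} The technical heart of the argument is the inductive construction itself: I must verify that after peeling off an ear triangle or an interior-vertex star the remaining complex is again a triangulated disk satisfying $\dim_I=0$, so that the induction closes, and that at each gluing the $\mathrm{CAT}(0)$ link condition holds at the amalgamated line. The $2$-dimensional, $\dim_I=0$ hypotheses are what rule out the pathological cases (multiple chord edges in a link producing several complementary components, or $K_4$-type obstructions to peeling). A cleaner alternative that bypasses the $\mathrm{CAT}(0)$ link verification is a Dehn-function combination theorem: a finite graph of groups with $\preceq n^2$ vertex groups and undistorted cyclic edge groups has Dehn function $\preceq n^2$, which, applied to the same decomposition, yields the upper bound without a geometric model.
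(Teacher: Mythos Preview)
Your strategy matches the paper's: show $H_\Gamma$ is $\mathrm{CAT}(0)$ via a tree of groups with infinite cyclic edge groups and RAAG vertex groups (using $H_{\{v\}\ast\Lambda}\cong A_\Lambda$), and get the lower bound from the $\mathbb{Z}^2$ subgroup. The paper's execution is cleaner than your induction: instead of peeling one piece at a time, it cuts $\Delta_\Gamma$ along \emph{all} chord edges (edges with both endpoints on $\partial\Delta_\Gamma$ but not themselves on $\partial\Delta_\Gamma$) simultaneously, obtaining fans and wheels as the pieces of a tree, and then cites Bridson--Haefliger II.11.17 for $G_1\ast_{\mathbb{Z}}G_2$ being $\mathrm{CAT}(0)$.

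Your inductive version has one imprecision that you did not list among your obstacles: in case~(ii) the closed star of an \emph{arbitrary} interior vertex need not meet the complement in a single edge (attach ear triangles to two nonadjacent boundary edges of a wheel, and the center's star meets the rest in two edges, so removing it disconnects the complement). You must either choose $v$ so that $\overline{\mathrm{st}}(v)$ is a leaf of the fan/wheel tree, or---equivalently---always apply case~(i) first when an ear exists and then argue that in the absence of ears every leaf piece is a wheel (a leaf fan would force an ear at an endpoint of its path). With that refinement the remaining complex is again a $2$-dimensional disk with $\dim_I=0$, the amalgamation is genuinely over a single $\mathbb{Z}$, and the rest of your outline (including the $\mathrm{CAT}(0)$ gluing, which is exactly the cited Bridson--Haefliger proposition) goes through.
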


For such a graph $\Gamma$, we will see later that the associated Bestvina--Brady group $H_{\Gamma}$ has a graph-of-groups decomposition, where the edges groups are infinite cyclic groups; and the vertex groups are Bestvina--Brady groups on some induced subgraphs of $\Gamma$, namely, \emph{fans} and \emph{wheels}. Moreover, each of the vertex groups is isomorphic to a non-hyperbolic right-angled Artin group.  

Recall that the \emph{join} of two graphs $\Gamma_{1}$ and $\Gamma_{2}$, denoted by $\Gamma_{1}\ast\Gamma_{2}$, is the graph obtained by taking the disjoint union of $\Gamma_{1}$ and $\Gamma_{2}$ together with all the edges that connect the vertices of $\Gamma_{1}$ and the vertices of $\Gamma_{2}$.

\begin{definition}
A \emph{fan} $\mathcal{F}_{n+1}$ is the join of a vertex and a path $P_{n}$. A \emph{wheel} $\mathcal{W}_{n+1}$ is the join of a vertex and a cycle $C_{n}$. 
\end{definition}  

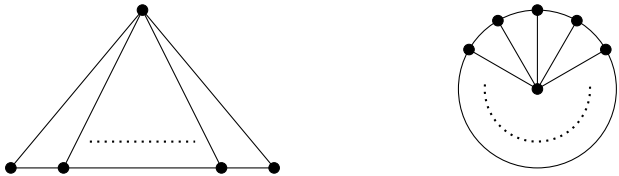
\begin{figure}[H]
\begin{center}
\begin{tikzpicture}[scale=0.7]

\draw (-10,-1.5)--(-5,-1.5);

\draw [dotted,thick] (-8.5,-1)--(-6.5,-1);

\draw (-7.5,1.5)--(-5,-1.5);
\draw (-7.5,1.5)--(-6,-1.5);
\draw (-7.5,1.5)--(-9,-1.5);
\draw (-7.5,1.5)--(-10,-1.5);

\draw [fill] (-7.5,1.5) circle [radius=0.1];
\draw [fill] (-10,-1.5) circle [radius=0.1];
\draw [fill] (-9,-1.5) circle [radius=0.1];
\draw [fill] (-6,-1.5) circle [radius=0.1];
\draw [fill] (-5,-1.5) circle [radius=0.1];

\draw (0,0) circle [radius=1.5];
	
\draw [fill] (0,0) circle [radius=0.1];
	
\draw (0,0)--(30:1.5cm);
\draw (0,0)--(60:1.5cm);
\draw (0,0)--(90:1.5cm);
\draw (0,0)--(120:1.5cm);
\draw (0,0)--(150:1.5cm);
	
\draw [dotted, thick] (175:1cm) arc [start angle=175, end angle=365, radius=1cm];
	
\draw [fill] (30:1.5cm) circle [radius=0.1];
\draw [fill] (60:1.5cm) circle [radius=0.1];
\draw [fill] (90:1.5cm) circle [radius=0.1];
\draw [fill] (120:1.5cm) circle [radius=0.1];
\draw [fill] (150:1.5cm) circle [radius=0.1];
\end{tikzpicture}
\end{center}
\caption{Fan and Wheel.}
\end{figure}

\begin{remark}
The flag complex on $\mathcal{W}_{4}$ is a tetrahedron, which is not $2$-dimensional. Throughout this paper, unless otherwise stated, all the wheels have at least five vertices, that is, $\mathcal{W}_{n}$ for $n\geq 5$. Note that a triangle is also a fan $\mathcal{F}_{3}$.
\end{remark}

Fans and wheels have a special structure: they can be decomposed as a join of a vertex and graph. When a finite simplicial graph $\Gamma$ decomposes as a join $\Gamma=\lbrace v\rbrace\ast\Gamma'$, the Dicks--Leary presentation gives $H_{\Gamma}\cong A_{\Gamma'}$ (see Example 2.5 in \cite{stefansuciualgebraicinvariantsforbbgroups}). That is, $H_{\Gamma}$ is a right-angled Artin group. Thus, $\delta_{H_{\Gamma}}$ is at most quadratic.

\begin{prop}\label{BB=RAAG}
Suppose a finite simplicial graph decomposes as a join $\Gamma=\lbrace v\rbrace\ast\Gamma'$. Then $H_{\Gamma}\cong A_{\Gamma'}$ and $\delta_{H_{\Gamma}}$ is at most quadratic. Moreover, if $\Gamma'$ contains an edge, then $A_{\Gamma'}$ is non-hyperbolic and $\delta_{H_{\Gamma}}$ is quadratic.
\end{prop}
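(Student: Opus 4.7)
The plan is to prove the three claims in sequence, leveraging the Dicks--Leary presentation for the isomorphism and then standard facts about RAAGs for the Dehn function bounds. For the isomorphism $H_\Gamma \cong A_{\Gamma'}$, I would first pick an orientation on $\Gamma$ in which every edge incident to $v$ is oriented outward (the orientation on edges within $\Gamma'$ can be arbitrary). Writing $e_u$ for the oriented edge $v \to u$ and $f_{uw}$ for the oriented edge of any $[u,w] \in E(\Gamma')$, each triangle $\{v,u,w\}$ of $\Gamma$ arising from an edge $[u,w]$ of $\Gamma'$ contributes Dicks--Leary relations that let me express $f_{uw}$ as $e_u^{-1} e_w$ (or $e_w^{-1} e_u$, depending on the chosen orientation of $[u,w]$), while simultaneously forcing $[e_u, e_w] = 1$. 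Applying a Tietze transformation to eliminate every $f_{uw}$ leaves a presentation on the generators $\{e_u : u \in V(\Gamma')\}$ whose only relations are the commutation relations $[e_u, e_w] = 1$ for $[u,w] \in E(\Gamma')$, which is exactly the standard presentation of $A_{\Gamma'}$.

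Once the isomorphism is in hand, the upper bound is immediate: $A_{\Gamma'}$ acts geometrically on the universal cover of its Salvetti complex, a $\mathrm{CAT}(0)$ cube complex, so $A_{\Gamma'}$ is a $\mathrm{CAT}(0)$ group and thus has at most quadratic Dehn function. This is recorded in Section \ref{subsection RAAG, BBG, DLP}.

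For the final claim, assume $\Gamma'$ contains an edge $[u,w]$. Then the subgroup of $A_{\Gamma'}$ generated by $u$ and $w$ is isomorphic to $\mathbb{Z}^2$ because the defining relation $[u,w]=1$ is present and there are no further relations among $\{u,w\}$ (they span a full subgraph). By Gromov's theorem, a word-hyperbolic group cannot contain a $\mathbb{Z}^2$ subgroup, so $A_{\Gamma'}$ is non-hyperbolic. The Gromov--Ol'shanskii--Bowditch gap theorem for Dehn functions then ensures $\delta_{A_{\Gamma'}}(n) \succeq n^2$, and combining this with the quadratic upper bound gives $\delta_{H_\Gamma}(n) \simeq n^2$.

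Since each of the three ingredients (the Dicks--Leary reduction, the $\mathrm{CAT}(0)$ upper bound, and the hyperbolic/$\mathbb{Z}^2$ dichotomy) is standard, I do not expect any serious obstacle; the only bookkeeping step that requires care is checking in the Tietze reduction that after eliminating the $f_{uw}$, no extra relations are introduced and no necessary commutation is missed, i.e., that a commutation $[e_u, e_w] = 1$ appears in the quotient \emph{if and only if} $[u,w]$ is an edge of $\Gamma'$.
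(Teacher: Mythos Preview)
Your proposal is correct and follows essentially the same route as the paper. The paper also identifies the star of edges at $v$ as the generating set (invoking Corollary~\ref{Generating set of BB} rather than an explicit Tietze elimination of the $f_{uw}$) and then checks that $e_u$ and $e_w$ commute in $H_\Gamma$ if and only if $u$ and $w$ are adjacent in $\Gamma'$; the Dehn function conclusions are drawn in exactly the same way from the $\mathrm{CAT}(0)$ structure and the $\mathbb{Z}^2$ subgroup.
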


\begin{proof}
Since $\Gamma=\lbrace v\rbrace\ast\Gamma'$, we have $A_{\Gamma}=\mathbb{Z}\times A_{\Gamma'}$. We claim that $H_{\Gamma}\cong A_{\Gamma'}$. Label the edges that have $v$ as the common endpoint by $e_{1},\cdots,e_{k}$; label the other end points of $e_{1},\cdots,e_{k}$ by $v_{1},\cdots,v_{k}$. Since $e_{1},\cdots,e_{k}$ form a maximal tree of $\Gamma$, they generate $H_{\Gamma}$; see Corollary \ref{Generating set of BB}. Meanwhile, $v_{1},\cdots,v_{k}$ generate $A_{\Gamma'}$. Define a map $\psi:H_{\Gamma}\rightarrow A_{\Gamma'}$ by sending $e_{i}$ to $v_{i}$ for $i=1,\cdots,k$. This is a bijection between the generating sets of $H_{\Gamma}$ and $A_{\Gamma'}$. We now argue that $\psi$ preserves relators. Note that the relators of $H_{\Gamma}$ and $A_{\Gamma'}$ are commutators. The generators $e_{i}$ and $e_{j}$ commute when they are two edges of the same triangle, that is, when their end points $v_{i}$ and $v_{j}$ are connected by an edge. Thus, $v_{i},v_{j}$ commute whenever $e_{i},e_{j}$ commute; the converse is true. Hence, $\psi$ is an isomorphism.	

Since $H_{\Gamma}\cong A_{\Gamma'}$ and $\delta_{A_{\Gamma'}}$ is at most quadratic, $\delta_{H_{\Gamma}}$ is at most quadratic. If $\Gamma'$ contains an edge, then $H_{\Gamma}\cong A_{\Gamma'}$ contains $\mathbb{Z}\times\mathbb{Z}$ as a subgroup. Therefore, $H_{\Gamma'}$ cannot be hyperbolic and $\delta_{H_{\Gamma}}$ has to be quadratic.
\end{proof}

\begin{remark}
In Proposition \ref{BB=RAAG} $\Gamma$ may have $K_{4}$ subgraphs. So there are infinitely many non-$2$-dimensional Bestvina--Brady groups whose Dehn functions are quadratic. 
\end{remark}

We want to point out that there are Bestvina--Brady groups that are not isomorphic to any right-angled Artin groups (\cite{stefansuciualgebraicinvariantsforbbgroups}, Proposition 9.4). The following corollary is an immediate consequence of Proposition \ref{BB=RAAG}.

\begin{cor}\label{fan and wheel have quadratic dehn functions}
If $\Gamma$ is a fan or wheel, then $\delta_{H_{\Gamma}}$ is quadratic. 
\end{cor}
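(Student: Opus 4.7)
The plan is to apply Proposition \ref{BB=RAAG} directly, since by definition both fans and wheels are joins with a vertex. First, I would unpack the definitions: a fan is $\mathcal{F}_{n+1}=\lbrace v\rbrace\ast P_{n}$ and a wheel is $\mathcal{W}_{n+1}=\lbrace v\rbrace\ast C_{n}$, so the hypothesis $\Gamma=\lbrace v\rbrace\ast\Gamma'$ of Proposition \ref{BB=RAAG} is automatically satisfied, with $\Gamma'=P_{n}$ or $\Gamma'=C_{n}$.

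Next, I would check the edge condition. For a wheel $\mathcal{W}_{n+1}$, we have $n\geq 4$ by the standing convention (wheels have at least five vertices), so the cycle $C_{n}$ certainly contains an edge. For a fan $\mathcal{F}_{n+1}$, the only degenerate case is when $P_{n}$ is a single vertex (making $\mathcal{F}_{n+1}$ just an edge, whose Bestvina--Brady group is trivial); otherwise $P_{n}$ contains at least one edge. In particular, a triangle $\mathcal{F}_{3}=\lbrace v\rbrace\ast P_{2}$ already has $P_{2}$ an edge, and the remark about $\mathcal{F}_{3}$ being a fan is covered by this case.

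With the edge condition verified, the second conclusion of Proposition \ref{BB=RAAG} applies and yields that $\delta_{H_{\Gamma}}$ is quadratic whenever $\Gamma$ is a fan (other than the trivial edge case) or a wheel. Since the argument is just an instantiation of the previous proposition, there is no real obstacle here; the corollary is essentially bookkeeping, recording that the join-with-a-vertex structure of fans and wheels puts them in the scope of Proposition \ref{BB=RAAG}. This sets up the vertex groups in the graph-of-groups decomposition announced earlier in the section, so that in the subsequent proof of Theorem \ref{Dehn function of interior dimension 0} we may treat fans and wheels as known quadratic building blocks.
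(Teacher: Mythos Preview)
Your proposal is correct and matches the paper's approach: the paper states the corollary as an immediate consequence of Proposition \ref{BB=RAAG} without further argument, and you have simply spelled out why that proposition applies (fans and wheels are by definition joins $\{v\}\ast\Gamma'$ with $\Gamma'$ containing an edge). Your handling of the degenerate fan case is a minor extra caution not needed in the paper's context, where the smallest fan is the triangle $\mathcal{F}_{3}$.
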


\begin{lemma}\label{Decompose gamma into fans and wheels}
Let $\Gamma$ be a finite simplicial graph such that $\Delta_{\Gamma}$ is a $2$-dimensional triangulated disk with $\dim_{I}(\Delta_{\Gamma})=0$. Then $\Gamma$ can be represented as a tree $T$: each vertex of $T$ represents a fan or a wheel; two vertices $v,w$ of $T$ are adjacent if the intersection of the graphs that are represented by $v$ and $w$ is an edge.  
\end{lemma}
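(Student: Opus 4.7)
My plan is to decompose $\Delta_{\Gamma}$ by cutting along its \emph{chords}, where a chord is a non-boundary edge of $\Delta_{\Gamma}$ whose both endpoints lie on $\partial\Delta_{\Gamma}$. Each chord separates the ambient sub-disk into two components, so after cutting along all $c$ chords I obtain $c+1$ chord-free sub-disks. The proof then reduces to two claims: each chord-free sub-disk is a single triangle (hence a fan $\mathcal{F}_{3}$) or a wheel, and the resulting adjacency graph is a tree that records the required incidence structure on $\Gamma$.

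For the local analysis, the hypothesis $\dim_{I}(\Delta_{\Gamma})=0$ (no interior edges or interior triangles) forces every interior vertex $u$ of $\Delta_{\Gamma}$ to have all of its $\Gamma$-neighbors on $\partial\Delta_{\Gamma}$; since $\Delta_{\Gamma}$ is a $2$-dimensional disk, the link of $u$ is a cycle $C_{u}$ whose vertices lie on $\partial\Delta_{\Gamma}$, and the closed star of $u$ is the wheel $\mathcal{W}_{u}=\{u\}\ast C_{u}$. Let $P$ be any chord-free sub-disk. If $P$ has no interior vertex, then every vertex of $P$ lies on $\partial P$; chord-freeness then forces every edge of $P$ to lie on $\partial P$ as well, and the only triangulated disk with this property is a single triangle. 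If $P$ has an interior vertex $u$, then $C_{u}\subseteq P$, and every edge of $C_{u}$ has both endpoints on $\partial P$ and, by chord-freeness, must lie on $\partial P$; hence $C_{u}$ is a simple cycle contained in the simple cycle $\partial P$, so $C_{u}=\partial P$. Since $C_{u}$ bounds a unique sub-disk of $|\Delta_{\Gamma}|$ and $\mathcal{W}_{u}\subseteq P$, we obtain $\mathcal{W}_{u}=P$; in particular $P$ cannot contain a second interior vertex, as such a vertex would then have to lie inside the wheel $\mathcal{W}_{u}$.

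For the global structure, form the graph $T$ whose vertices are the pieces and whose edges record chords shared by adjacent pieces. Since $\Delta_{\Gamma}$ is connected, $T$ is connected; with $c+1$ vertices and at most $c$ edges, $T$ is a tree. Two distinct pieces share at most one chord because any pair of distinct chords of a disk (meeting in at most a common endpoint on $\partial\Delta_{\Gamma}$) separates it into three regions rather than two. To match the adjacency criterion of the statement, I note that every boundary edge of $\Delta_{\Gamma}$ lies in a unique triangle and every spoke edge $uv_{i}$ (with $u$ interior) lies in a unique wheel, so an edge of $\Gamma$ contained in two distinct pieces must be a chord, in which case its two incident pieces are adjacent in $T$ and intersect exactly in that edge. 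The main obstacle will be the verification that $P=\mathcal{W}_{u}$ from $C_{u}=\partial P$: it rests on the planar fact that a simple closed curve in a disk bounds a unique sub-disk, applied inside the $2$-complex $|\Delta_{\Gamma}|$, together with the observation that $\mathcal{W}_{u}$ already fills that sub-disk.
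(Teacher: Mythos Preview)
Your proposal is correct and follows essentially the same decomposition as the paper: both cut $\Delta_{\Gamma}$ along all edges having both endpoints on $\partial\Delta_{\Gamma}$ (your ``chords'' are exactly the paper's ``type~(3)'' edges), and both identify the resulting pieces as single triangles or wheels. Your local analysis is in fact more explicit than the paper's, which simply asserts that the pieces are wheels and triangles without spelling out the chord-free argument.

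The one genuine methodological difference is the tree verification. The paper argues by contradiction: a cycle in $T$ would assemble into a triangulated annulus inside $\Delta_{\Gamma}$, contradicting the disk hypothesis. You instead count: $c$ chords produce $c+1$ pieces and exactly $c$ adjacencies, and a connected graph with $c+1$ vertices and $c$ edges is a tree. Your argument is more elementary and simultaneously yields that $T$ is simple (no multi-edges), which is precisely the ``two pieces share at most one chord'' fact you then re-derive geometrically. Either route works; yours avoids the topological annulus step at the cost of requiring the (easy) observation that each chord bounds exactly two pieces.
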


\begin{proof}
We observe that for any interior vertex of $\Delta_{\Gamma}$, the induced subgraph on the interior vertex, together with its adjacent vertices, is a wheel. Also, note that if $\Delta_{\Gamma}$ has two interior vertices, then they are not connected. Otherwise, the edge that connects the two interior vertices would be an interior edge of $\Delta_{\Gamma}$, which contradicts our assumption. Thus, there are three types of edges of $\Delta_{\Gamma}$: (1) edges on $\partial\Delta_{\Gamma}$, (2) edges that connect interior vertices and vertices on $\partial\Delta_{\Gamma}$, and (3) edges that intersect $\partial\Delta_{\Gamma}$ at two vertices. Cutting along all the edges of type (3), we obtain connected components of $\Delta_{\Gamma}$ whose $1$-skeletons are wheels and fans (triangles). For each connected component, we assign a vertex to it; two vertices are connected if the corresponding connected components intersect in $\Delta_{\Gamma}$. Thus, we have the desire decomposition of $\Gamma$ with an underlying graph $T$.

Now we show that $T$ is a tree. Suppose $T$ is not a tree, then $T$ contains a circle $C_{n}$ of length $n$. The flag complex on the subgraph of $\Gamma$ whose decomposition corresponds to $C_{n}$ would be a triangulated annulus. Thus, $\Delta_{\Gamma}$ would not be a triangulate disk, and this contradicts the assumption that $\Delta_{\Gamma}$ is a triangulated disk. Hence, $T$ is a tree.
\end{proof}

Note that the decomposition in Proposition \ref{Decompose gamma into fans and wheels} is not unique. Since for a fan $\mathcal{F}_{n+1}$, there are $p(n)$ such decompositions, where $p(n)$ is the partition function.

Let $\Gamma=\Gamma_{1}\cup\Gamma_{2}$ be a finite simplicial graph. If $\Gamma_{1}\cap\Gamma_{2}$ is a single edge, then the Bestvina--Brady group $H_{\Gamma}$ splits over $\mathbb{Z}$ as $H_{\Gamma_{1}}\ast_{\mathbb{Z}}H_{\Gamma_{2}}$ by the Dicks--Leary presentation (see Theorem \ref{Dicks and Leary presentation}). It is not hard to see that Lemma \ref{Decompose gamma into fans and wheels} implies that $H_{\Gamma}$ splits over $\mathbb{Z}$, and the vertex groups are $\mathrm{CAT}(0)$ groups. We summarize these statements in the following proposition.

\begin{prop}\label{Decompose BBG as a graph of groups}
Let $\Gamma$ be a finite simplicial graph such that $\Delta_{\Gamma}$ is a triangulated disk with $\dim_{I}(\Delta_{\Gamma})=0$. Then $H_{\Gamma}$ has a graph-of-groups decomposition, where the underlying graph is the tree $T$ in Lemma \ref{Decompose gamma into fans and wheels}, the edge groups are $\mathbb{Z}$, and the vertex groups are Bestvina--Brady groups defined by the graphs that are represented by the vertex of $T$ in the Lemma \ref{Decompose gamma into fans and wheels}. Moreover, the vertex groups are right-angled Artin groups, then hence, $\mathrm{CAT}(0)$ groups.
\end{prop}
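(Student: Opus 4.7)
The plan is to combine Lemma \ref{Decompose gamma into fans and wheels} with the splitting observation immediately preceding the proposition and then iterate. By Lemma \ref{Decompose gamma into fans and wheels}, $\Gamma$ is assembled from fans and wheels glued along single edges, and the combinatorics of these gluings is encoded by the tree $T$. I would proceed by induction on the number of vertices of $T$.

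For the base case, $T$ is a single vertex, so $\Gamma$ itself is a fan or a wheel. Either way $\Gamma$ decomposes as a join $\{v\}\ast\Gamma'$, where $\Gamma'$ is a path or a cycle; thus by Proposition \ref{BB=RAAG} we have $H_{\Gamma}\cong A_{\Gamma'}$, which is a right-angled Artin group and hence $\mathrm{CAT}(0)$. For the inductive step, pick a leaf $v$ of $T$, and let $\Gamma_v$ be the corresponding fan or wheel and $\Gamma^\ast$ the union of the subgraphs associated to $T\setminus\{v\}$. By construction $\Gamma=\Gamma_v\cup\Gamma^\ast$ and $\Gamma_v\cap\Gamma^\ast$ is a single edge $e$. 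The key point is that every triangle in $\Gamma$ lies entirely in $\Gamma_v$ or entirely in $\Gamma^\ast$: indeed, any triangle containing $e$ has its third vertex either in $\Gamma_v\setminus\Gamma^\ast$ or in $\Gamma^\ast\setminus\Gamma_v$ because the intersection contains no other vertices (any triangle not containing $e$ is a fortiori in one side since its vertices are connected by an edge not equal to $e$). Consequently the Dicks--Leary relators (Theorem \ref{Dicks and Leary presentation}) of $H_\Gamma$ split into those of $H_{\Gamma_v}$ and those of $H_{\Gamma^\ast}$, while both groups contain the cyclic subgroup $\langle e\rangle\cong\mathbb{Z}$. This gives the amalgamated product
\[
H_\Gamma \;\cong\; H_{\Gamma_v} \ast_{\mathbb{Z}} H_{\Gamma^\ast}.
\]

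Applying the inductive hypothesis to $\Gamma^\ast$, whose tree is $T\setminus\{v\}$, yields a graph-of-groups decomposition of $H_{\Gamma^\ast}$ over $T\setminus\{v\}$ with cyclic edge groups and vertex groups that are Bestvina--Brady groups of fans or wheels. Attaching the edge of $T$ incident to $v$ with edge group $\langle e\rangle$ and new vertex group $H_{\Gamma_v}$ produces the desired decomposition of $H_\Gamma$ over $T$. Finally, each vertex group $H_{\Gamma_v}$ is a Bestvina--Brady group on a fan or a wheel, so Proposition \ref{BB=RAAG} identifies it with a right-angled Artin group, and right-angled Artin groups are $\mathrm{CAT}(0)$ by Section \ref{subsection RAAG, BBG, DLP}.

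The main obstacle is verifying the amalgamation cleanly: one must check that no Dicks--Leary relator straddles the two pieces, and that the shared edge $e$ is not absorbed into any larger identification. This is really a statement about the combinatorics of triangles in $\Gamma$, and it uses in an essential way both that $\Gamma_v\cap\Gamma^\ast$ is a single edge (so no triangle spans both) and that $\dim_I(\Delta_\Gamma)=0$ (which prevents interior edges from hiding additional amalgamating data). Once these points are in place, the rest of the argument is a routine induction.
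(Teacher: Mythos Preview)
Your argument is correct and follows exactly the route the paper takes. The paper does not give a separate proof of this proposition; it simply notes just before the statement that the Dicks--Leary presentation yields $H_\Gamma\cong H_{\Gamma_1}\ast_{\mathbb Z}H_{\Gamma_2}$ whenever $\Gamma_1\cap\Gamma_2$ is a single edge, invokes Lemma~\ref{Decompose gamma into fans and wheels}, and declares the proposition a summary. Your induction on $|V(T)|$ is precisely the way to make that summary rigorous, and your identification of the vertex groups via Proposition~\ref{BB=RAAG} matches the paper's reasoning.

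One minor comment: your closing remark that $\dim_I(\Delta_\Gamma)=0$ ``prevents interior edges from hiding additional amalgamating data'' is a bit misplaced. That hypothesis is consumed entirely by Lemma~\ref{Decompose gamma into fans and wheels} to produce the tree $T$; once you know $\Gamma_v\cap\Gamma^\ast$ is a single (type~(3)) edge separating the disk, the splitting of the Dicks--Leary presentation needs nothing further. You might also state explicitly that removing the leaf piece $\Gamma_v$ leaves $\Delta_{\Gamma^\ast}$ a $2$-dimensional triangulated disk with $\dim_I=0$, so that the inductive hypothesis genuinely applies to $\Gamma^\ast$; this is clear from the geometry (cutting a disk along a boundary-to-boundary chord yields two disks), but it is the one point your write-up leaves implicit.
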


To prove our main theorem in this section, we need the following proposition in \cite{bridsonandhaefliger}:

\begin{prop}\textnormal{(\cite{bridsonandhaefliger}, Chapter II.11, 11.17 Proposition)}\label{Bridson 11.17}
If each of the groups $G_{1}$ and $G_{2}$ is the fundamental group of a non-positively curved compact metric space, then so is $G_{1}\ast_{\mathbb{Z}}G_{2}$. In particular, if $G_{1}$ and $G_{2}$ are $\mathrm{CAT}(0)$ groups, so is $G_{1}\ast_{\mathbb{Z}}G_{2}$.
\end{prop}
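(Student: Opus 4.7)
The plan is to construct a compact non-positively curved (NPC) space $Y$ whose fundamental group is $G_{1} \ast_{\mathbb{Z}} G_{2}$ by gluing the given compact NPC spaces $X_{1}$ (with $\pi_{1}(X_{1}) = G_{1}$) and $X_{2}$ (with $\pi_{1}(X_{2}) = G_{2}$) along a common circle representing the amalgamating $\mathbb{Z}$ subgroup.

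First I would realize each inclusion $\mathbb{Z} \hookrightarrow G_{i}$ by a closed geodesic in $X_{i}$. The universal cover $\widetilde{X}_{i}$ is complete $\mathrm{CAT}(0)$ by the Cartan--Hadamard theorem, and the deck action of $G_{i}$ on $\widetilde{X}_{i}$ is geometric, so every non-trivial element acts semisimply. The generator $g_{i}$ of the amalgamated $\mathbb{Z}$ has infinite order in $G_{i}$, hence acts as a hyperbolic isometry of $\widetilde{X}_{i}$ with a translation axis $\alpha_{i}$; the quotient $\alpha_{i}/\langle g_{i}\rangle$ gives a closed geodesic loop $c_{i}\colon S^{1}_{\ell_{i}} \to X_{i}$ whose length equals the translation length of $g_{i}$. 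Rescaling the metric on $X_{2}$ (which preserves the NPC condition) allows me to arrange $\ell_{1} = \ell_{2} = \ell$, and I then identify the images of $c_{1}$ and $c_{2}$ with a single isometric copy of the circle $S^{1}_{\ell}$.

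Define $Y$ to be the metric pushout $X_{1} \sqcup_{S^{1}_{\ell}} X_{2}$. Applying Seifert--van Kampen to open thickenings of $X_{1}$ and $X_{2}$ inside $Y$ yields $\pi_{1}(Y) \cong G_{1} \ast_{\mathbb{Z}} G_{2}$, where the amalgamating $\mathbb{Z}$ is exactly the cyclic subgroup generated by the loop class of $c_{1}=c_{2}$. To see that $Y$ is NPC, I would invoke the gluing theorem for locally $\mathrm{CAT}(0)$ spaces from Bridson--Haefliger, Chapter II.11: the metric pushout of two NPC spaces along an isometric copy of a complete, locally convex subspace is again NPC. The image of each closed geodesic is locally convex, because locally it is the isometric image of a geodesic segment, and geodesic segments are convex in any $\mathrm{CAT}(0)$ space.

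The main obstacle is verifying the local-convexity hypothesis --- equivalently, the link condition at points of the gluing circle --- in the general, possibly singular, NPC setting; this is precisely the content of the gluing theorem, so most of the delicate work is packaged into citing it correctly. The $\mathrm{CAT}(0)$ version of the statement then follows, since a $\mathrm{CAT}(0)$ group is, by definition, the fundamental group of a compact NPC space with $\mathrm{CAT}(0)$ universal cover, and the universal cover of $Y$ is a tree of spaces assembled from translates of $\widetilde{X}_{1}$ and $\widetilde{X}_{2}$ glued along axes, which is $\mathrm{CAT}(0)$ by iterating the same gluing theorem.
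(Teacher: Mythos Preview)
The paper does not supply its own proof of this proposition; it is simply quoted from Bridson--Haefliger (II.11.17) as an external input and then applied. Your sketch is essentially the argument that appears in that reference: realize the amalgamating $\mathbb{Z}$ by a closed local geodesic in each $X_{i}$ (using semisimplicity of the cocompact action), normalize the two translation lengths, and appeal to the $\mathrm{CAT}(0)$ gluing theorem.

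One genuine technical point is worth flagging. Your construction forms the pushout $X_{1}\sqcup_{S^{1}_{\ell}}X_{2}$ by directly identifying the images of $c_{1}$ and $c_{2}$. But a closed local geodesic in a compact NPC space need not be embedded: if $c_{1}(s)=c_{1}(t)$ for $s\neq t$ while $c_{2}(s)\neq c_{2}(t)$, the pushout forces an unwanted identification inside $X_{2}$, and the resulting space is no longer obviously locally $\mathrm{CAT}(0)$ (nor does Seifert--van Kampen give the correct amalgam). The fix in Bridson--Haefliger is to insert a flat tube $S^{1}_{\ell}\times[0,1]$ between the two pieces, attaching each boundary circle by the local isometry $c_{i}$; then the gluing theorem for local isometries onto locally convex subspaces applies cleanly on each side. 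With that adjustment your outline matches the cited proof.
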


\begin{proof}[Proof of Theorem \ref{Dehn function of interior dimension 0}]
Let $\Gamma$ be a finite simplicial graph such that $\Delta_{\Gamma}$ is a $2$-dimensional triangulated disk with $\dim_{I}(\Delta_{\Gamma})=0$. By Proposition \ref{Decompose BBG as a graph of groups}, $H_{\Gamma}$ has a tree-of-groups decomposition, where the edge groups are $\mathbb{Z}$, and the vertex groups are $\mathrm{CAT}(0)$. Repeating Proposition \ref{Bridson 11.17} gives us that $H_{\Gamma}$ is a $\mathrm{CAT}(0)$ group. Thus, $\delta_{H_{\Gamma}}$ is at most quadratic. Since $\Delta_{\Gamma}$ is a triangulated disk, $H_{\Gamma}$ contains $\mathbb{Z}\times\mathbb{Z}$ subgroups, and thus, it is not hyperbolic. Therefore, $\delta_{H_{\Gamma}}$ is not linear. Hence, $\delta_{H_{\Gamma}}$ is quadratic. 
\end{proof}

\section{Disks with square boundaries}\label{section disks with square boundaries}

The goal of this section is to prove the following:

\begin{thm}\label{Dehn functions for square boundary}
Let $\Gamma$ be a finite simplicial graph such that $\Delta_{\Gamma}$ is a $2$-dimensional triangulated disk whose boundary is a square. If $\dim_{I}(\Delta_{\Gamma})=d$ for $d\in\lbrace 0,1,2 \rbrace$, then $\delta_{H_{\Gamma}}(n)\cong n^{d+2}$.
\end{thm}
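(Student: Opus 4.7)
The plan is to treat the three values of $d$ separately, since each is governed by a different mechanism sketched in the introduction.

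The case $d=0$ is handled without the square-boundary hypothesis by Theorem \ref{Dehn function of interior dimension 0}; since $\Delta_\Gamma$ is a triangulated disk and hence contains $\mathbb{Z}^2$ subgroups while $H_\Gamma$ is $\mathrm{CAT}(0)$, we get $\delta_{H_\Gamma}\simeq n^2 = n^{d+2}$.

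For $d=1$, I first establish the structural claim that, under the hypotheses (square boundary and a nonempty set of interior edges but no interior triangle), $\Gamma$ must be the suspension of a path $P_n$ with $n\ge 3$. The idea is that any interior edge $e$ has two triangular neighbors; the absence of interior triangles forces the third vertex of each neighbor to lie on $\partial\Delta_\Gamma$, and the square boundary forces the two endpoints of $e$ to be the two suspension points $s_+,s_-$ so that the interior edges together with the boundary vertices sharing them form the spine path. With $\Gamma$ identified, the upper bound $\delta_{H_\Gamma}\preceq n^3$ is obtained by choosing an orientation, writing the Dicks--Leary presentation, and analyzing van Kampen diagrams via the corridor schemes of Brady--Forester (one corridor per suspension generator $s_\pm$). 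The key object is the \emph{stack}: the subdiagram cut out by two parallel corridors, whose boundary splits into corridor sides and two paths labeled by subwords on the spine. The technical heart is Lemma \ref{stacks are cubic}, which bounds the area of a stack cubically in its perimeter; combining this with the standard corridor decomposition then yields the cubic upper bound for the full diagram as in Lemma \ref{The suspension of a path of length at least 3 is at most cubic}. The matching lower bound $n^3\preceq \delta_{H_\Gamma}$ is produced by adapting the \emph{height-pushing map} of \cite{abddy}: I build a family of words $w_n$ of length $O(n)$ that represent the identity, and use the height-pushing map on the zero-set $Z_\Gamma$ to show that filling $w_n$ requires $\Omega(n^3)$ relator applications, exploiting the $1$-dimensional interior to get one extra factor of $n$ over the quadratic baseline.

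For $d=2$, the upper bound $\delta_{H_\Gamma}\preceq n^4$ is immediate from Dison's universal quartic bound on Dehn functions of Bestvina--Brady groups. The lower bound $n^4\preceq\delta_{H_\Gamma}$ again comes from the height-pushing machinery of \cite{abddy}: the presence of an interior triangle provides the extra degree of freedom required to push twice, so the same family of identity words built from the square boundary now requires $\Omega(n^4)$ area to fill. In both the $d=1$ and $d=2$ lower-bound arguments, one needs to verify that the combinatorial hypotheses of the relevant theorem of \cite{abddy} are satisfied in our setting, which follows from the square-boundary condition (providing the needed four-cycle structure) together with the existence of an interior $d$-simplex (providing the height direction along which to push).

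The main obstacle is clearly the cubic upper bound in the case $d=1$, namely Lemma \ref{stacks are cubic}. The difficulty is that a stack can be filled in many ways and the obvious corridor-counting gives only a quartic bound; extracting the true cubic behavior requires a careful double induction on the depth and width of the stack, controlling how corridors can merge and split inside it. All other ingredients (structural identification of $\Gamma$, invocation of Dison's theorem, and the $\mathrm{CAT}(0)$ argument for $d=0$) are either short or cite existing machinery, so the bulk of the section will be devoted to the stacks lemma and its application to suspensions of paths.
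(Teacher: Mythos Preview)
Your outline matches the paper's approach in every case: $d=0$ via Theorem \ref{Dehn function of interior dimension 0}; $d=1$ via the structural Lemma \ref{The graph of square boundary and d=1}, the cubic upper bound through corridor schemes and the stacks Lemma \ref{stacks are cubic}, and the cubic lower bound through the height-pushing map; $d=2$ via Dison's quartic upper bound and the \cite{abddy} lower bound. One correction to your sketch of the structural lemma: the endpoints of an interior edge are \emph{interior} vertices (consecutive vertices of the suspended path), not the suspension poles $s_{\pm}$; it is the third vertices of the two triangles adjacent to that edge that are forced onto $\partial\Delta_\Gamma$ and, after ruling out $K_4$'s and branching, turn out to be a single fixed pair of opposite boundary vertices---those are the poles. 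Also, the paper's proof of Lemma \ref{stacks are cubic} is not a double induction but a direct computation: it classifies the possible vertex types appearing along each corridor boundary $t_i$, derives a linear recurrence for $|t_{i+1}|-|t_{i-1}|$, and sums.
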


When $d=0$, Theorem \ref{Dehn functions for square boundary} is a corollary of Theorem \ref{Dehn function of interior dimension 0}. In Subsection \ref{subsection lower bound}, we establish the lower bound for $\delta_{H_{\Gamma}}$ for $d=1,2$. Since there is a universal quartic upper bound given by Dison \cite{dison}, we only need to establish the cubic upper bound for $\delta_{H_{\Gamma}}$ for $d=1$; the detailed proof is contained in Subsection \ref{subsection upper bound}. Before we proceed with the proof, we prove the following lemma which will be used later on. 

\begin{lemma}\label{The graph of square boundary and d=1}
Let $\Gamma$ be a finite simplicial graph such that $\Delta_{\Gamma}$ is a $2$-dimensional triangulated disk with square boundary. If $\dim_{I}(\Delta_{\Gamma})=1$, then $\Gamma$ is the suspension of a path of length at least $3$.
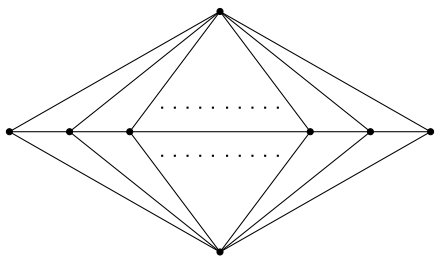
\begin{figure}[H]
\begin{center}
\begin{tikzpicture}[scale=0.4]
\draw (-7,0)--(7,0);

\draw (0,4)--(-7,0);
\draw (0,4)--(-5,0);
\draw (0,4)--(-3,0);
\draw (0,4)--(3,0);
\draw (0,4)--(5,0);
\draw (0,4)--(7,0);

\draw (0,-4)--(-7,0);
\draw (0,-4)--(-5,0);
\draw (0,-4)--(-3,0);
\draw (0,-4)--(3,0);
\draw (0,-4)--(5,0);
\draw (0,-4)--(7,0);

\draw [thick, loosely dotted] (-0.2,0.8)--(-2.3,0.8);
\draw [thick, loosely dotted] (0.2,0.8)--(2.3,0.8);
\draw [thick, loosely dotted] (-0.2,-0.8)--(-2.3,-0.8);
\draw [thick, loosely dotted] (0.2,-0.8)--(2.3,-0.8);

\draw [fill] (0,4) circle [radius=0.1];
\draw [fill] (-7,0) circle [radius=0.1];
\draw [fill] (-5,0) circle [radius=0.1];
\draw [fill] (-3,0) circle [radius=0.1];
\draw [fill] (3,0) circle [radius=0.1];
\draw [fill] (5,0) circle [radius=0.1];
\draw [fill] (7,0) circle [radius=0.1];
\draw [fill] (0,-4) circle [radius=0.1];

\end{tikzpicture}
\end{center}
\caption{The suspension of a path of length at least $3$.}
\label{The suspension of a path of length at least $3$.}
\end{figure}
\end{lemma}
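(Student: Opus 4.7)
The plan is as follows. First, using Euler's formula $V-E+F=1$ for the disk $\Delta_\Gamma$ together with a count of triangles by type: label the boundary vertices $a,b,c,d$ in cyclic order so that the square condition forces $a\not\sim c$ and $b\not\sim d$. Then no triangle of $\Delta_\Gamma$ has three boundary vertices, and by hypothesis no triangle has three interior vertices. Hence every triangle has either one boundary vertex (with two interior vertices and a unique interior edge) or two boundary vertices forming a boundary edge (plus one interior vertex). Since each interior edge lies in two triangles and each boundary edge in one, a short bookkeeping combined with Euler's formula yields $E_I = V_I - 1$, where $V_I$ and $E_I$ denote the numbers of interior vertices and interior edges.

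Next, I would show that the subgraph $T$ of $\Gamma$ on the interior vertices with interior edges is a tree. For acyclicity, any simple cycle in $T$ embeds as a simple closed curve in the topological disk $\Delta_\Gamma$ and therefore bounds a sub-disk whose vertices are all interior to $\Delta_\Gamma$; this sub-disk is triangulated, producing a triangle with three interior vertices and contradicting $\dim_I(\Delta_\Gamma) = 1$. Combined with $E_I = V_I - 1$, this forces $T$ to be a tree.

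To show $T$ is in fact a path, I would analyze the link of an interior vertex $v$, which is a cycle in $\Delta_\Gamma$. Two consecutive interior vertices in this cycle would form a forbidden interior triangle with $v$, so the interior neighbors of $v$ are separated on the link by at least one boundary vertex in each gap, and any two consecutive boundary vertices on the link must be joined by a boundary edge of $\partial\Delta_\Gamma$. With only four possible boundary neighbors and only four boundary-boundary edges available, a straightforward case check over the possible cyclic arrangements rules out $\deg_T(v)\geq 3$ entirely, and for $\deg_T(v)=2$ it forces the two boundary vertices of $v$ in the link to form the non-adjacent pair $\{a,c\}$ or $\{b,d\}$. Propagating this constraint across an edge of $T$ forces the same opposite pair---say $\{a,c\}$---at every internal vertex of $T$, so $T$ is a path whose internal vertices are all adjacent to $a$ and $c$.

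Finally, a parallel link analysis at each leaf of $T$ (or at the two vertices when $V_I=2$), together with the observation that each of $b,d$ must have an interior neighbor (otherwise its link-arc between its two $\partial\Delta_\Gamma$-neighbors could not close up), shows that each leaf is adjacent to $\{a,c\}$ together with exactly one of $\{b,d\}$ and that the two leaves realize the two different choices. Assembling these adjacencies, $\Gamma$ is precisely the suspension with poles $a,c$ of the path consisting of $b$, the vertices of $T$ in their linear order, and $d$, which has length $V_I+1\geq 3$. The main obstacle is the cyclic case analysis of Step~3: one must systematically eliminate each possible interleaving of interior and boundary vertices on the link of an interior vertex by exploiting the fact that non-consecutive vertices on a link cannot form an edge of $\Gamma$ without producing a triangle that violates the link's cyclic order.
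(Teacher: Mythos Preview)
Your proposal is correct and takes a genuinely different route from the paper. The paper works directly and somewhat pictorially: each interior edge lies in two triangles whose third vertices form an opposite pair on the square; then (i) the interior-edge graph has no vertex of valency $\geq 3$ (a tripod would force a double edge), and (ii) it is connected (otherwise a diagonal $bd$ would be needed between components, creating a $K_4$). You instead obtain $E_I=V_I-1$ from Euler, acyclicity from a Jordan-curve argument (so $T$ is a tree), and then run a systematic link analysis, using that in a flag $2$-disk the link of an interior vertex is the \emph{induced} subgraph on its neighbors and hence a chordless cycle. Your approach is longer to execute but more self-contained: it makes explicit the key constraint that $\Gamma$-adjacent neighbors of $v$ must be link-consecutive, which the paper relies on only implicitly, and the Euler/Jordan step cleanly separates the global topology from the local combinatorics. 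One small point to tighten: the assertion that $a\not\sim c$ and $b\not\sim d$ uses more than the square boundary alone---if, say, $a\sim c$, then the flag $2$-simplices $abc$ and $acd$ already fill the disk and force $\dim_I(\Delta_\Gamma)=0$.
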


\begin{proof}
Define $\Gamma'$ to be the graph whose edge set consists of all the interior edges of $\Delta_{\Gamma}$. Label the four vertices on $\partial\Delta_{\Gamma}$ by $a,b,c,d$ as shown in Figure \ref{two vertices on the boundary of Delta are not adjacent}. Since $\Delta_{\Gamma}$ is a triangulated disk, each edge of $\Gamma'$ together with two vertices on $\partial\Delta_{\Gamma}$ form two adjacent triangles, and these two vertices on $\partial\Delta_{\Gamma}$ are not adjacent. Otherwise, there would be a $K_{4}$ subgraph contained in $\Gamma$; see Figure \ref{two vertices on the boundary of Delta are not adjacent}.
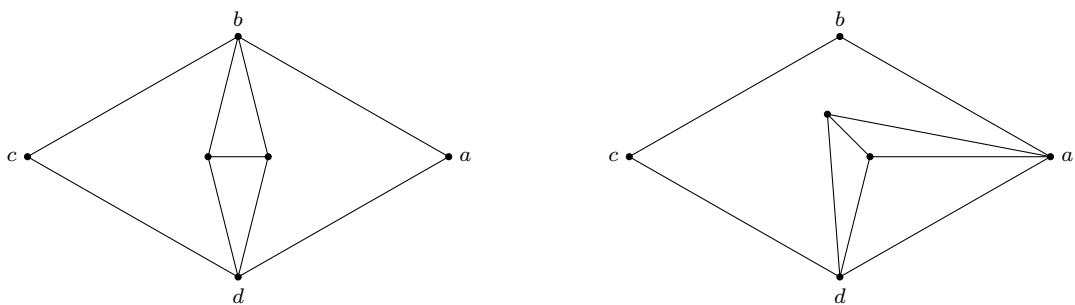
\begin{figure}[H]
\begin{center}
\begin{tikzpicture}[scale=0.4]
\draw (7,0)--(0,4)--(-7,0)--(0,-4)--(7,0);

\node [right] at (7,0) {\tiny $a$};
\node [above] at (0,4) {\tiny $b$};
\node [left] at (-7,0) {\tiny $c$};
\node [below] at (0,-4) {\tiny $d$};

\draw (-1,0)--(1,0);
\draw (0,4)--(-1,0);
\draw (0,4)--(1,0);
\draw (0,-4)--(-1,0);
\draw (0,-4)--(1,0);

\draw [fill] (0,4) circle [radius=0.1];
\draw [fill] (-7,0) circle [radius=0.1];
\draw [fill] (-1,0) circle [radius=0.1];
\draw [fill] (1,0) circle [radius=0.1];
\draw [fill] (7,0) circle [radius=0.1];
\draw [fill] (0,-4) circle [radius=0.1];

\draw (27,0)--(20,4)--(13,0)--(20,-4)--(27,0);

\node [right] at (27,0) {\tiny $a$};
\node [above] at (20,4) {\tiny $b$};
\node [left] at (13,0) {\tiny $c$};
\node [below] at (20,-4) {\tiny $d$};

\draw (19.5857864376,1.41421356237)--(21,0);
\draw (27,0)--(19.5857864376,1.41421356237);
\draw (27,0)--(21,0);
\draw (20,-4)--(19.5857864376,1.41421356237);
\draw (20,-4)--(21,0);

\draw [fill] (20,4) circle [radius=0.1];
\draw [fill] (13,0) circle [radius=0.1];
\draw [fill] (19.5857864376,1.41421356237) circle [radius=0.1];
\draw [fill] (21,0) circle [radius=0.1];
\draw [fill] (27,0) circle [radius=0.1];
\draw [fill] (20,-4) circle [radius=0.1];

\end{tikzpicture}
\end{center}
\caption{An interior edge together with two vertices on $\partial\Delta_{\Gamma}$ form two adjacent triangles. These two vertices on $\partial\Delta_{\Gamma}$ are not adjacent, see the picture on the left. Otherwise, $\Gamma$ would contain a $K_{4}$, see the picture on the right.}
\label{two vertices on the boundary of Delta are not adjacent}
\end{figure}

We make two claims on $\Gamma'$. The first claim is: $\Gamma'$ has no vertices whose valency is greater than $3$. Suppose $\Gamma'$ has a vertex whose valency is greater than $3$, that is, $\Gamma'$ contains a tripod. Since each edge of the tripod together with two non-adjacent vertices on $\partial\Delta_{\Gamma}$, say $b,d$, form two adjacent triangles, there are two triangles in $\Delta_{\Gamma}$ share two consecutive edges; see Figure \ref{Gamma' has a tripod.}. Thus, $\Gamma$ is not a simplicial graph, we get a contradiction. Hence, no vertex of $\Gamma'$ has valency greater than $3$. This proves the first claim.
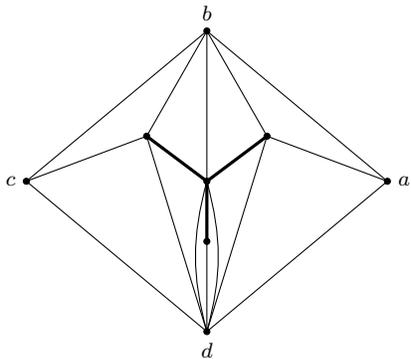
\begin{figure}[H]
\begin{center}
\begin{tikzpicture}[scale=0.4]

\draw (-1,0)--(5,5)--(11,0)--(5,-5)--(-1,0);

\node [right] at (11,0) {\tiny $a$};
\node [above] at (5,5) {\tiny $b$};
\node [left] at (-1,0) {\tiny $c$};
\node [below] at (5,-5) {\tiny $d$};

\draw [very thick] (3,1.5)--(5,0);
\draw [very thick] (7,1.5)--(5,0);
\draw [very thick] (5,-2)--(5,0);

\draw (3,1.5)--(5,5);
\draw (5,0)--(5,5);
\draw (7,1.5)--(5,5);

\draw (3,1.5)--(-1,0);
\draw (7,1.5)--(11,0);

\draw (3,1.5)--(5,-5);
\draw (5,0) to [out=255,in=105] (5,-5);

\draw (5,-2)--(5,-5);

\draw (7,1.5)--(5,-5);
\draw (5,0) to [out=285,in=75] (5,-5);

\draw [fill] (5,5) circle [radius=0.1];
\draw [fill] (3,1.5) circle [radius=0.1];
\draw [fill] (7,1.5) circle [radius=0.1];
\draw [fill] (-1,0) circle [radius=0.1];
\draw [fill] (5,0) circle [radius=0.1];
\draw [fill] (11,0) circle [radius=0.1];
\draw [fill] (5,-2) circle [radius=0.1];
\draw [fill] (5,-5) circle [radius=0.1];

\end{tikzpicture}
\end{center}
\caption{The picture shows the simplest case: $\Gamma'$ is a tripod, drown in thick lines. There are two edges connecting the central vertex of the tripod and the vertex $d$.}
\label{Gamma' has a tripod.}
\end{figure}

The second claim is that $\Gamma'$ is a connected path. Suppose $\Gamma'$ is not connected, say $\Gamma'$ has $k$ connected components $\Gamma'_{1},\cdots,\Gamma'_{k}$. Since none of the vertices of $\Gamma'$ has valency greater than $3$, each of $\Gamma'_{1},\cdots,\Gamma'_{k}$ is a connected path. Also, all the pairs of adjacent vertices of $\Gamma'$ are connected either to vertices $a,c$ or vertices $b,d$ simultaneously, say vertices $b,d$:
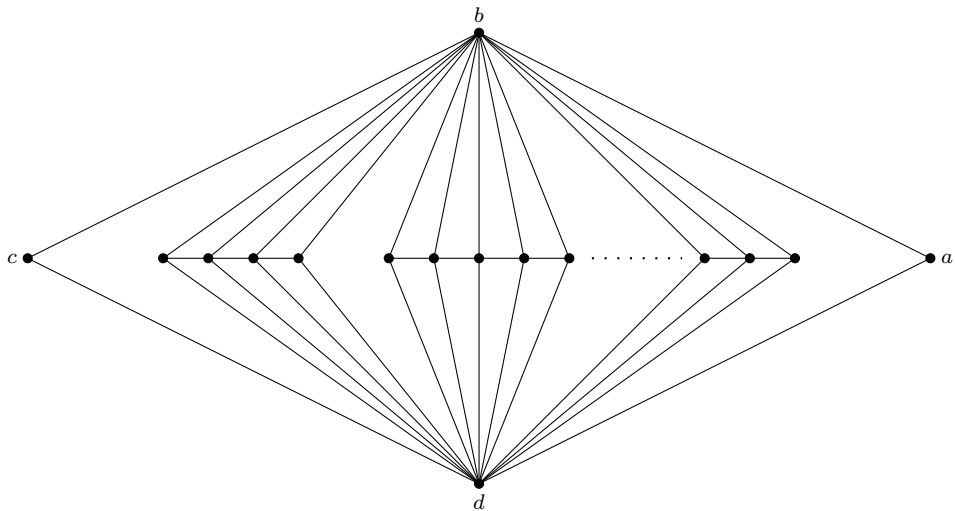
\begin{figure}[H]
\begin{center}
\begin{tikzpicture}[scale=0.6]

\draw (-10,0)--(0,5)--(10,0)--(0,-5)--(-10,0);

\node [right] at (10,0) {\tiny $a$};
\node [above] at (0,5) {\tiny $b$};
\node [left] at (-10,0) {\tiny $c$};
\node [below] at (0,-5) {\tiny $d$};

\draw [fill] (-10,0) circle [radius=0.1];
\draw [fill] (0,5) circle [radius=0.1];
\draw [fill] (10,0) circle [radius=0.1];
\draw [fill] (0,-5) circle [radius=0.1];

\draw (-7,0)--(-4,0);

\draw (0,5)--(-7,0);
\draw (0,5)--(-6,0);
\draw (0,5)--(-5,0);
\draw (0,5)--(-4,0);
\draw (0,-5)--(-7,0);
\draw (0,-5)--(-6,0);
\draw (0,-5)--(-5,0);
\draw (0,-5)--(-4,0);

\draw [fill] (-7,0) circle [radius=0.1];
\draw [fill] (-6,0) circle [radius=0.1];
\draw [fill] (-5,0) circle [radius=0.1];
\draw [fill] (-4,0) circle [radius=0.1];

\draw (-2,0)--(2,0);

\draw (0,5)--(-2,0);
\draw (0,5)--(-1,0);
\draw (0,5)--(0,0);
\draw (0,5)--(1,0);
\draw (0,5)--(2,0);
\draw (0,-5)--(-2,0);
\draw (0,-5)--(-1,0);
\draw (0,-5)--(0,0);
\draw (0,-5)--(1,0);
\draw (0,-5)--(2,0);

\draw [fill] (-2,0) circle [radius=0.1];
\draw [fill] (-1,0) circle [radius=0.1];
\draw [fill] (0,0) circle [radius=0.1];
\draw [fill] (1,0) circle [radius=0.1];
\draw [fill] (2,0) circle [radius=0.1];

\draw [thick, loosely dotted] (2.5,0)--(4.5,0);

\draw (5,0)--(7,0);

\draw (0,5)--(5,0);
\draw (0,5)--(6,0);
\draw (0,5)--(7,0);
\draw (0,-5)--(5,0);
\draw (0,-5)--(6,0);
\draw (0,-5)--(7,0);

\draw [fill] (5,0) circle [radius=0.1];
\draw [fill] (6,0) circle [radius=0.1];
\draw [fill] (7,0) circle [radius=0.1];
\end{tikzpicture}
\end{center}
\caption{Connected components of $\Gamma'$. Each pair of adjacent vertices of $\Gamma'$ are connected to a pair of non-adjacent vertices on $\partial\Delta_{\Gamma}$.}
\label{Gamma' with k components}
\end{figure}

In Figure \ref{Gamma' with k components}, label from the left-most connected component to the right-most component of $\Gamma'$ by $\Gamma'_{1},\cdots,\Gamma'_{k}$, and define $\Gamma_{i}$ to be the join of $\Gamma'_{i}$ and $\lbrace b,d\rbrace$. Since $\Delta_{\Gamma}$ is a triangulated disk, there must be an edge connecting $b$ and $d$ between $\Gamma_{i}$ and $\Gamma_{i+1}$, $i=1,\cdots k-1$:
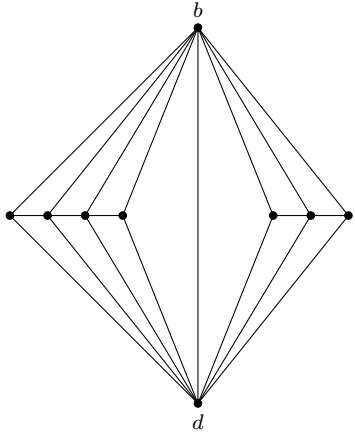
\begin{figure}[H]
\begin{center}
\begin{tikzpicture}[scale=0.5]

\node [above] at (5,5) {\tiny $b$};
\node [below] at (5,-5) {\tiny $d$};

\draw [fill] (5,5) circle [radius=0.1];
\draw [fill] (5,-5) circle [radius=0.1];

\draw (0,0)--(3,0);
\draw (5,5)--(0,0);
\draw (5,5)--(1,0);
\draw (5,5)--(2,0);
\draw (5,5)--(3,0);
\draw (5,-5)--(0,0);
\draw (5,-5)--(1,0);
\draw (5,-5)--(2,0);
\draw (5,-5)--(3,0);

\draw [fill] (0,0) circle [radius=0.1];
\draw [fill] (1,0) circle [radius=0.1];
\draw [fill] (2,0) circle [radius=0.1];
\draw [fill] (3,0) circle [radius=0.1];

\draw (5,5)--(5,-5);

\draw (7,0)--(9,0);
\draw (5,5)--(7,0);
\draw (5,5)--(8,0);
\draw (5,5)--(9,0);
\draw (5,-5)--(7,0);
\draw (5,-5)--(8,0);
\draw (5,-5)--(9,0);

\draw [fill] (7,0) circle [radius=0.1];
\draw [fill] (8,0) circle [radius=0.1];
\draw [fill] (9,0) circle [radius=0.1];

\end{tikzpicture}
\end{center}
\caption{Between $\Gamma_{i}$ and $\Gamma_{i+1}$, vertices $b,d$ are connected by an edge.}
\label{Edge between Gamma_i and Gamma_i+1}
\end{figure}
\noindent In Figure \ref{Edge between Gamma_i and Gamma_i+1} we see that connecting $b,d$ by an edge creates $K_{4}$ subgraphs in $\Gamma$. This contradicts our assumption. Thus, the graph $\Gamma'$ is a connected path. 

Now, we have shown that $\Gamma'$ is a connected path, and all pairs of adjacent vertices of $\Gamma'$ are connected either to vertices $a,c$ or $b,d$ simultaneously. Again, suppose $b,d$ are connected to all vertices of $\Gamma'$. The two end vertices of $\Gamma'$ have no choices but to be connected to $a$ and $c$. Hence, $\Gamma$ is a suspension of $\Gamma'$. This proves the lemma. 
\end{proof}

\begin{remark}\label{a family of graph that have cubic Dehn function}
In \cite{bradyrileyshortthegeometryofthewordproblemforfinitelygeneratedgroups}, Brady gave an example of a cubic Dehn function $\delta_{H_{\Gamma}}$, where $\Gamma$ is the suspension of a path of length $3$. 	Theorem \ref{Dehn functions for square boundary} plus Lemma \ref{The graph of square boundary and d=1} gives a concrete family of graphs $\Gamma$ satisfying $\delta_{H_{\Gamma}}(n)\simeq n^{3}$. 
\end{remark}

\subsection{Lower bound}\label{subsection lower bound} 

In \cite{abddy}, the authors introduced \emph{height-pushing maps} to give a lower bound on the higher dimensional Dehn functions of \emph{orthoplex groups}. The same technique can be used here to obtain the desired lower bound. We give necessary background here; see \cite{abddy} for more details.

Let $\Gamma$ be a finite simplical graph. We use the notations $X_{\Gamma}$, $\widetilde{X}_{\Gamma}$, $Z_{\Gamma}$, $h$ from Subsection ~\ref{subsection RAAG, BBG, DLP}. Recall that $\widetilde{X}_{\Gamma}$ is a $\mathrm{CAT}(0)$ cube complex. We put the $\ell^{1}$-metric on each cube and extend it to the whole space $\widetilde{X}_{\Gamma}$. Let $B_{r}(x)$ and $S_{r}(x)$ be the $\ell^{1}$-ball and $\ell^{1}$-sphere in $\widetilde{X}_{\Gamma}$ centered at $x$ with radius $r$, respectively. 

A subspace $F\subseteq\widetilde{X}_{\Gamma}$ is called a \emph{$k$-flat} if it is isometric to the Euclidean space $\mathbb{E}^{k}$. 

\begin{thm}\textnormal{(\cite{abddy}, Theorem 4.2)} \label{height-pushing map}
There is an $H_{\Gamma}$-equivariant retraction, called the \emph{height-pushing map}
\begin{align*}
\mathbf{P}:\widetilde{X}_{\Gamma}\setminus\bigcup\limits_{v\notin Z_{\Gamma}}B_{1/4}(v)\rightarrow Z_{\Gamma},
\end{align*}
where the union is over the vertices that are not in $Z_{\Gamma}$, such that for all $t>0$, when $\mathbf{P}$ is restricted to $h^{-1}([-t,t])$, $\mathbf{P}$ is a $(ct+c)$-Lipschitz map, where $c$ is a constant which only depends on $\Gamma$.
\end{thm}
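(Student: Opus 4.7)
The plan is to construct $\mathbf{P}$ by piecing together retractions on the cubes of $\widetilde{X}_{\Gamma}$ and then to prove the Lipschitz bound, which is the real content of the theorem. The cube-by-cube definition exploits the fact that $h$ is affine on each cube, while the uniformity of the construction gives equivariance almost for free.

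First I would set up the local picture on each cube. Identify any cube $C$ of $\widetilde{X}_{\Gamma}$ with $[0,1]^{k}$ so that $h|_{C}(x_{1},\ldots,x_{k}) = x_{1} + \cdots + x_{k} + c_{C}$ for an integer constant $c_{C}$. Then $C \cap Z_{\Gamma}$ is either empty or the polytope slice $\{\sum x_{i} = -c_{C}\}$. In the slice case, define $\mathbf{P}_{C}$ on $C$ minus the $1/4$-balls around its non-zero-height vertices as the piecewise-linear retraction obtained by flowing along the height gradient until hitting the slice. In the empty case, $C$ lies strictly on one side of $Z_{\Gamma}$ and one pushes $C$ onto the codimension-one face of $C$ whose $h$-value is closest to $0$, so that the retraction defined in a neighboring cube carries the image further. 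The $1/4$-balls are excised precisely so that the flow is always defined and avoids the ``singular'' vertices of the cubing where the push direction would be ambiguous.

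Next I would check gluing and equivariance. Because the local prescription depends only on the combinatorial data of vertex heights, and because the height function and its gradient are coherent across shared faces of adjacent cubes, the local maps $\mathbf{P}_{C}$ agree on common faces and assemble into a continuous global $\mathbf{P}$. Equivariance is automatic: $H_{\Gamma} = \ker\phi$ acts on $\widetilde{X}_{\Gamma}$ by deck transformations and preserves $h$ (since $h$ is $\phi$-equivariant), so $H_{\Gamma}$ permutes cubes in a height-respecting way and the canonical construction commutes with the action.

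The main obstacle is the Lipschitz estimate. For $p, q \in h^{-1}([-t, t])$, choose a rectifiable path $\gamma$ from $p$ to $q$ realizing the $\ell^{1}$-distance and bound the length of $\mathbf{P}(\gamma)$. Within a single cube the local retraction is bilipschitz with a constant $c_{0}$ depending only on $\dim\widetilde{X}_{\Gamma}$, since the gradient-flow projection onto a codimension-one slice of a unit cube with $1/4$-balls excised is uniformly bilipschitz. The $t$-dependence enters because cubes not meeting $Z_{\Gamma}$ must have their images iteratively pushed through up to $O(t)$ cube-by-cube steps before landing in $Z_{\Gamma}$, and one has to control how distances behave under this iterated pushing. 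The key technical point is that, working in the $\ell^{1}$ metric and tracking $\gamma$ level-by-level in $h$, the dilation only accumulates additively per level rather than multiplicatively, yielding a bound linear in $t$ rather than the naive exponential. Making this additive accounting precise, uniformly in $\Gamma$, is where the principal difficulty lies; one exploits that $\gamma$ stays in $h^{-1}([-t,t])$ and that crossing a single unit-height layer can only spread pairs of points at a bounded additive rate, so the total spread along the full flow to $Z_{\Gamma}$ is $O(t) \cdot d(p,q)$ as required.
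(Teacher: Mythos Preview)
The paper does not prove this theorem at all: it is quoted verbatim from \cite{abddy} (as Theorem~4.2 there) and used as a black box in the lower-bound argument of Lemma~\ref{lower bound of square boundary}. The surrounding text explicitly says ``We give necessary background here; see \cite{abddy} for more details,'' and no proof or sketch follows. So there is nothing in this paper to compare your proposal against.

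That said, your sketch is a reasonable outline of the kind of argument one expects for a result like this: a cube-by-cube construction using the affine structure of $h$ on each cube, equivariance coming from the canonical nature of the construction, and a Lipschitz bound obtained by tracking how many height-layers a path must be pushed through. The vaguest part of your outline is the passage from ``naive exponential'' to ``additive per layer''; that is exactly where the real work in \cite{abddy} lies, and you have not indicated a concrete mechanism (e.g., an explicit combinatorial description of what the push does to an $\ell^{1}$-geodesic segment crossing a cube, or an inductive inequality on lengths). If you want a self-contained proof you would need to make that step precise; otherwise, citing \cite{abddy} as the paper does is the appropriate move.
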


In the next lemma, we establish the lower bound for $\delta_{H_{\Gamma}}$ in Theorem \ref{Dehn functions for square boundary}.

\begin{lemma}\label{lower bound of square boundary}
Let $\Gamma$ be a finite simplicial graph such that $\Delta_{\Gamma}$ is a $2$-dimensional triangulated disk with square boundary. If $\dim_{I}(\Delta_{\Gamma})=d$, then $\delta_{H_{\Gamma}}(n)\succcurlyeq n^{d+2}$.
\end{lemma}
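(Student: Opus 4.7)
\medskip
\noindent\textbf{Proof proposal.}

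The plan is to adapt the height-pushing technique of \cite{abddy}. The retraction $\mathbf{P}\colon\widetilde X_\Gamma\setminus\bigcup_{v\notin Z_\Gamma}B_{1/4}(v)\to Z_\Gamma$ of Theorem~\ref{height-pushing map} is $(cn+c)$-Lipschitz on $h^{-1}([-n,n])$, so it converts volume estimates in the CAT(0) cube complex $\widetilde X_\Gamma$ into combinatorial area bounds in $Z_\Gamma$ with only a polynomial loss in $n$.

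The case $d=0$ is immediate: any $2$-simplex of $\Delta_\Gamma$ provides a $\mathbb{Z}^2$ subgroup of $H_\Gamma$ via Proposition~\ref{BB=RAAG}, so $H_\Gamma$ is not hyperbolic and $\delta_{H_\Gamma}(n)\succcurlyeq n^2$.

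For $d\in\{1,2\}$, the plan is to construct an explicit family of words $w_n\in H_\Gamma$ with $|w_n|=O(n)$ and to prove that every van Kampen diagram filling $w_n$ over the Dicks--Leary presentation has area at least $c\,n^{d+2}$. First, fix an interior $d$-simplex $\sigma$ of $\Delta_\Gamma$: the $(d{+}1)$-clique on its vertices gives an embedded $\mathbb{Z}^{d+1}\subset A_\Gamma$ and hence a $(d{+}1)$-flat $F\subset\widetilde X_\Gamma$. For $d=1$, Lemma~\ref{The graph of square boundary and d=1} supplies two non-adjacent vertices $a,b$ of $\Gamma$, each joined to both endpoints of $\sigma$, giving parallel translates $aF$ and $bF$ of $F$; for $d=2$, the $K_4$-freeness of $\Gamma$ forbids a vertex adjacent to all of $\sigma$, so the additional direction must instead be built by chaining together neighbouring interior triangles that share edges with $\sigma$. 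In either case the word $w_n$ is designed so that its loop in $Z_\Gamma$ traces the boundary of a $(d{+}2)$-dimensional region $R_n\subset\widetilde X_\Gamma$ of $(d{+}2)$-volume $\asymp n^{d+2}$. The lower bound then follows by a height-filtered volume comparison: any filling of area $A$ in $Z_\Gamma$ can be thickened inside $\widetilde X_\Gamma$ and pushed back via $\mathbf{P}$, which controls the $(d{+}2)$-volume of $R_n$ up to a single factor of the Lipschitz constant of $\mathbf{P}$ on $h^{-1}([-n,n])$, yielding $A\succcurlyeq n^{d+2}$.

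The step I expect to be the main obstacle is the incompressibility of $R_n$: ruling out the possibility that a van Kampen diagram short-circuits $w_n$ through a cheaper region of $Z_\Gamma$, routing around $R_n$ rather than through it. This is where the combinatorics of $\sigma$ and its peripheral structure enter decisively --- especially for $d=2$, where the naive ``prism over $\sigma$'' is obstructed by $K_4$-freeness and must be replaced by an iterated commutator configuration --- and will likely require a careful height-stratified analysis of arbitrary van Kampen diagrams in the spirit of the corridor schemes of \cite{bradyandforester}. The cases $d=1$ and $d=2$ will most likely need to be handled separately, since the geometry of the slab $R_n$ (a prism over an edge versus one over a triangle) differs qualitatively between them.
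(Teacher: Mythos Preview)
Your proposal misidentifies the geometry of the height-pushing argument, and the resulting outline has genuine gaps.

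\textbf{Where the flat comes from.} You build the flat $F$ from the interior $d$-simplex $\sigma$. The paper (following \cite{abddy}) does the opposite: the $2$-flat $\overline{F}$ is built entirely from the four \emph{boundary} vertices $a_0,b_0,a_1,b_1$ of the square, via the alternating rays $\gamma_i\vert_{\mathbb{R}^{+}}=a_ib_ia_ib_i\cdots$. Because $a_i$ and $b_i$ are non-adjacent, $h$ restricted to $\overline{F}$ is $|x_0|+|x_1|$, so $\overline{F}$ is a reversed pyramid whose horizontal slices are loops; after translating by $(a_0)^{-r}$ one obtains a loop $S_r\subset Z_\Gamma$ of length $\asymp r$ bounding a $2$-disk $D_r\subset\widetilde X_\Gamma$. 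The interior simplex $\sigma$ plays no role in the construction of $S_r$ or $D_r$; it enters only when one replaces $B_{1/4}(v)$ by a copy of $\Delta_\Gamma$ at each interior vertex $v\in D_r$ and pushes via $\mathbf{P}$.

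\textbf{What object gives the exponent.} There is no ``$(d{+}2)$-dimensional region $R_n$'': the disk $D_r$ is always $2$-dimensional, regardless of $d$. The exponent $d+2$ arises because $D_r$ has $\asymp r^2$ interior vertices, and at a vertex of height $-i$ the pushed copy $\mathbf{P}(\sigma)$ of a $d$-dimensional interior simplex has $d$-volume $\asymp i^d$ (since $\mathbf{P}$ is $(ci+c)$-Lipschitz there). Summing $\sum_{i=1}^{r} i\cdot i^{d}\asymp r^{d+2}$ gives the bound. Thus $d=1$ and $d=2$ are handled by the \emph{same} construction; the only difference is whether $\sigma$ is an interior edge or an interior triangle. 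Your proposed case split, and in particular the ``chaining together neighbouring interior triangles'' for $d=2$, is unnecessary.

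\textbf{How incompressibility is established.} You anticipate needing a corridor-scheme analysis of arbitrary van Kampen diagrams. The paper uses no such analysis for the lower bound (corridor schemes appear only in the \emph{upper} bound). What makes the pushed area a lower bound on \emph{every} filling is the purely algebraic fact $\langle a_0,a_1,b_0,b_1\rangle\cap\langle x,y\rangle=\{e\}$: this forces the pushed copies of $\sigma$ based at distinct vertices of $D_r$ to be pairwise disjoint in $Z_\Gamma$, and then the standard link-degree argument from \cite{abddy} shows that any filling of $S_r$ in $Z_\Gamma$ must cover all of them.

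In short, you have inverted the roles of the boundary square and the interior simplex, inflated a $2$-disk into a $(d{+}2)$-region, and replaced a short algebraic disjointness check with an open-ended diagram analysis. The actual argument is both simpler and more uniform in $d$ than your sketch suggests.
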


We give the idea of the proof of Lemma \ref{lower bound of square boundary}. Recall that Bestvina--Brady groups $H_{\Gamma}$ act geometrically on the zero level set $Z_{\Gamma}=h^{-1}(0)$. So the filling function of $Z_{\Gamma}$ is equivalent to $\delta_{H_{\Gamma}}$. For each $r$, we construct a loop $S_{r}$ in $Z_{\Gamma}$ whose length is $\simeq r$, and it bounds a disk in the ambient space $\widetilde{X}_{\Gamma}$ and a disk in $Z_{\Gamma}$. Then we use the height-pushing map to push the filling of the disk in $\widetilde{X}_{\Gamma}$ to get a lower bound of the filling of the loop $S_{r}$ in $Z_{\Gamma}$.

\begin{proof}[Proof of Lemma \ref{lower bound of square boundary}]
The case $d=0$ follows from Theorem \ref{Dehn function of interior dimension 0}; the case $d=2$ appears in \cite{abddy}. We prove the case when $d=1$ which uses the same strategy as the case $d=2$. When $d=1$, recall from Lemma \ref{The graph of square boundary and d=1} that $\Gamma$ is the suspension of a path of length at least $3$. Label the four vertices on the boundary of $\Delta_{\Gamma}$ as follows:
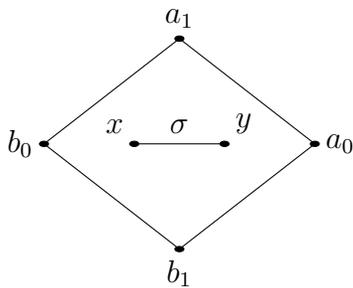
\begin{figure}[H]
\begin{center}
\begin{tikzpicture}[xscale=1.2, yscale=0.7]
\draw (1.5,0)--(0,2)--(-1.5,0)--(0,-2)--(1.5,0);
\draw (-0.5,0)--(0.5,0);

\draw [fill] (1.5,0) circle [radius=0.05];
\node [right] at (1.5,0) {$a_{0}$};

\draw [fill] (-1.5,0) circle [radius=0.05];
\node [left] at (-1.5,0) {$b_{0}$};

\draw [fill] (0,2) circle [radius=0.05];
\node [above] at (0,2) {$a_{1}$};

\draw [fill] (0,-2) circle [radius=0.05];
\node [below] at (0,-2) {$b_{1}$};

\draw [fill] (-0.5,0) circle [radius=0.05];
\node [above left] at (-0.5,0) {$x$};

\node [above] at (0,0) {$\sigma$};

\draw [fill] (0.5,0) circle [radius=0.05];
\node [above right] at (0.5,0) {$y$};

\end{tikzpicture}
\end{center}
\caption{The square boundary of $\Delta_{\Gamma}$ and an interior $1$-simplex $\sigma$.}
\label{The square boundary of a $2$-dimensional disk and an interior $1$-simplex.}
\end{figure}

Define bi-infinite geodesic rays $\gamma_{i}$, $i=0,1$, in the $1$-skeleton of $\widetilde{X}_{\Gamma}$:
\begin{align*}
\gamma_{i}(0)=e, \ \ \ 
\gamma_{i}\vert_{\mathbb{R}^{+}}=a_{i}b_{i}a_{i}b_{i}\cdots, \ \ \ 
\gamma_{i}\vert_{\mathbb{R}^{-}}=b_{i}a_{i}b_{i}a_{i}\cdots.
\end{align*}
Define a map $F:\mathbb{Z}\times\mathbb{Z}\rightarrow A_{\Gamma}$ by 
\begin{align*}
F(x)=\gamma_{0}(x_{0})\gamma_{1}(x_{1}), \ \ \ x=(x_{0},x_{1})\in\mathbb{Z}\times\mathbb{Z}.
\end{align*}
The image of $F$ consists of elements in the non-abelian group $\langle a_{0},a_{1},b_{0},b_{1}\rangle$. Let $\overline{F}$ be the non-standard $2$-dimensional flat in $\widetilde{X}_{\Gamma}$ such that the set of its vertices is the image of $F$; see Figure \ref{The non-standard 2-dimensional flat}. Since 
\begin{align*}
h(x)=h(F(x_{0},x_{1}))=|x_{0}|+|x_{1}|, \ \ x=(x_{0},x_{1})\in\mathbb{Z}\times\mathbb{Z},
\end{align*}
the flat $\overline{F}$ has a unique vertex $F(0,0)=\gamma_{0}(0)\gamma_{1}(0)=e$ at height $0$. We think of $\overline{F}$ as the boundary of a reversed infinite square pyramid with the apex $F(0,0)$. For each $r>0$, the intersection $\overline{F}\cap h^{-1}(r)$ is homeomorphic to $S^{1}$, which is a loop $L_{r}$ at height $r$. This loop $L_{r}$ bounds a $2$-disk $\overline{F}\cap h^{-1}([0,r])$ in $\overline{F}$.
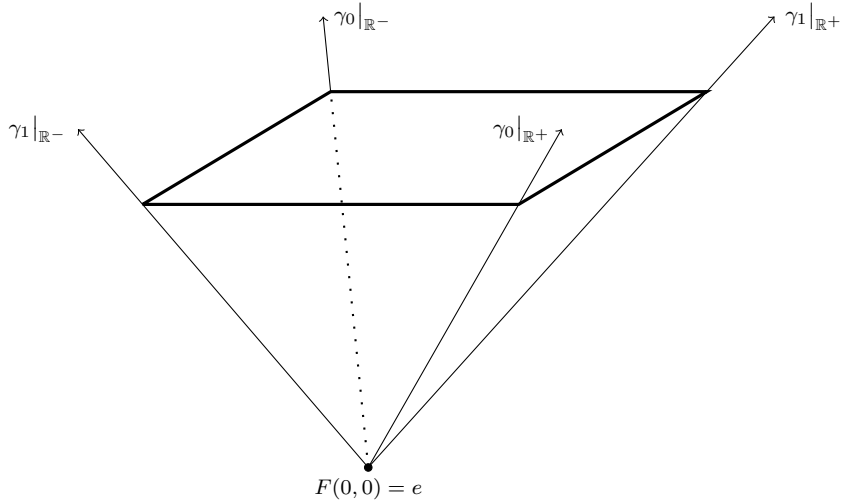
\begin{figure}[H]
\begin{center}
\begin{tikzpicture}[scale=0.5]


\draw [very thick] (0,0)--(10,0)--(15,3)--(5,3)--(0,0);

\draw [->] (6,-7)--(-1.71428571428,2);
\node [left] at (-1.71428571428,2) {\tiny $\gamma_{1}\big\vert_{\mathbb{R}^{-}}$};

\draw [->] (6,-7)--(11.1428571428,2);
\node [left] at (11.1428571428,2) {\tiny $\gamma_{0}\big\vert_{\mathbb{R}^{+}}$};

\draw [thick, loosely dotted] (6,-7)--(5,3);
\draw [->] (5,3)--(4.8,5);
\node [right] at (4.8,5) {\tiny $\gamma_{0}\big\vert_{\mathbb{R}^{-}}$};

\draw [->](6,-7)--(16.8,5);
\node [right] at (16.8,5) {\tiny $\gamma_{1}\big\vert_{\mathbb{R}^{+}}$};

\node [below] at (6,-7) {\tiny $F(0,0)=e$};
\draw [fill] (6,-7) circle [radius=0.1];

\end{tikzpicture}
\end{center}
\caption{The non-standard $2$-dimensional flat $\overline{F}$. The parallelogram drown in the thick lines is the intersection $\overline{F}\cap h^{-1}(r)$.}
\label{The non-standard 2-dimensional flat}
\end{figure}
\noindent Using the group action $A_{\Gamma}$ on $\widetilde{X}_{\Gamma}$, we translate the $2$-disk $\overline{F}\cap h^{-1}([0,r])$ to $(a_{0})^{-r}\overline{F}\cap h^{-1}([-r,0])$. We denote the new $2$-disk by $D_{r}$ and its interior by $\mathring{D}_{r}$. The loop $L_{r}$ at height $r$ is translated to a loop $S_{r}$ in $Z_{\Gamma}$ that is also homeomorphic to $S^{1}$:
\begin{align*}
S_{r}:=[(a_{0})^{-r}\overline{F}]\cap h^{-1}(0)=[(a_{0})^{-r}\overline{F}]\cap Z_{\Gamma}.
\end{align*}  
Notice that after performing this translation, the unique vertex $(a_{0})^{-r}F(0,0)$ is at height $-r$. Now we have that $S_{r}$ is a loop in $Z_{\Gamma}$ and it bounds a $2$-disk $D_{r}$. The next step is to use the height-pushing map $\mathbf{P}$ to push this filling $D_{r}$ of $S_{r}$ in $\widetilde{X}_{\Gamma}$ to fill $S_{r}$ in $Z_{\Gamma}$. 

In order to use the height-pushing map, we need to do some surgery on $D_{r}$. At each vertex $v\in\mathring{D}_{r}$, replace $B_{1/4}(v)$ by a copy of $\Delta_{\Gamma}$. We now show that after applying the height-pushing map to the surgered $\mathring{D}_{r}$, the scaled copies of $1$-simplices in the interior of $\Delta_{\Gamma}$ at each vertex of the surgered $\mathring{D}_{r}$ do not intersect much in $Z_{\Gamma}$. We first consider two different $1$-simplices $\sigma_{1}$ and $\sigma_{2}$ in the interior of $\Delta_{\Gamma}$ at a vertex $v$ in $\mathring{D}_{r}$. Since $\sigma_{1}$ and $\sigma_{2}$ either are disjoint or intersect at a $0$-simplex, the images $\mathbf{P}(\sigma_{1})$ and $\mathbf{P}(\sigma_{2})$ intersect at most at a $0$-simplex. We next consider $1$-simplices in the interior of $\Delta_{\Gamma}$ at different vertices $v_{1},v_{2}$ in $\mathring{D}_{r}$. Let $\sigma=[x,y]$ be an $1$-simplex in the interior of $\Delta_{\Gamma}$; see Figure \ref{The square boundary of a $2$-dimensional disk and an interior $1$-simplex.}. Let $v'_{1}$ and $v'_{2}$ be vertices of the scaled copies $\mathbf{P}(\sigma)$ in $Z_{\Gamma}$, based at vertices $v_{1}$ and $v_{2}$ in $\mathring{D}_{r}$, respectively. Then we have
\begin{align*}
v'_{1}=(a_{0})^{-r}v_{1}x^{s_{1}}y^{t_{1}} \ \ \text{and} \ \ v'_{2}=(a_{0})^{-r}v_{2}x^{s_{2}}y^{t_{2}}
\end{align*}
for some $s_{1},t_{2},s_{2},t_{2}\in\mathbb{Z}_{\geq0}$. If $v'_{1}=v'_{2}$, that is, the intersection of scaled copies $\mathbf{P}(\sigma)$ based at $v_{1}$ and $v_{2}$ is non-empty, then we have $v_{1}=v_{2}$ since $\langle a_{0}, a_{1}, b_{0}, b_{1}\rangle\cap\langle x,y\rangle=\lbrace0\rbrace$. Thus, no vertices of different scaled copies of $\sigma$ are the same. That is, scaled copies of $\sigma$ in $Z_{\Gamma}$ are disjoint. 
Each interior $1$-simplex is an intersection of two $2$-simplices in $\Delta_{\Gamma}$, so the image of the surgered $\mathring{D}_{r}$ under $\mathbf{P}$ has the area at least $2\cdot|\lbrace\text{number of $1$-simplices}\rbrace|$. Since we only need a lower bound, we don't need to count all the $1$-simplices. For each vertex in $\mathring{D}_{r}$, we take one $1$-simplex $\sigma$ in $\Delta_{\Gamma}$ based at that vertex. As we have seen that the scaled copies $\mathbf{P}(\sigma)$ don't cancel out. So the filling of $S_{r}$ in $Z_{\Gamma}$ will be at least the sum of the length of these scaled copies $\mathbf{P}(\sigma)$. For each interior $1$-simplex $\sigma$ in $\Delta_{\Gamma}$ based at a vertex $v$ of $\mathring{D}_{r}$, it follows by Theorem \ref{height-pushing map} that the length of the scaled copied $\mathbf{P}(\sigma)$ in $Z_{\Gamma}$ grows linearly in terms of $r$, that is, the length of $\mathbf{P}(\sigma)$ depends on how far is $\sigma$ pushed into $Z_{\Gamma}$ by the height-pushing map. Thus, the further the $\sigma$ is from $Z_{\Gamma}$, the longer the length of $\mathbf{P}(\sigma)$. At height $-r$, there is only one vertex in $\mathring{D}_{r}$, namely, the apex $(a_{0})^{-r}F(0,0)$. The image of the $1$-simplex $\sigma$ based at the apex under $\mathbf{P}$ is $cr+c$, by Theorem \ref{height-pushing map}. At each height $i=-(r-1),\cdots,-1$, there are $4i$ vertices and the image of $1$-simplex $\sigma$ under $\mathbf{P}$ has length $c(r-i)+c$. We have
\begin{align*}
\text{Area}(S_{r})\geq(cr+c)+\sum^{r-1}_{i=1}4i[c(r-i)+c]\simeq r^{3}.
\end{align*}
Thus, the filling function of $Z_{\Gamma}$ is at least cubic. Hence, $\delta_{H_{\Gamma}}$ is at least cubic. This proves the lemma. 
\end{proof}

\subsection{Upper bound}\label{subsection upper bound}

As we mentioned at the beginning of this section, we only need to establish the cubic upper bound for $d=1$ in Theorem \ref{Dehn functions for square boundary}. The proof relies on analyzing a van Kampen diagram of an arbitrary word $w\in H_{\Gamma}$ that represents the identity. All the van Kampen diagrams in this section are assumed to be minimal. The tool that we will be using is \emph{corridor schemes}.  Here we only give the definition; more details can be found in \cite{bradyandforester}. Let $\Gamma$ be a finite simplicial graph with labelings on the edges such that $\Delta_{\Gamma}$ is a $2$-dimensional triangulated disk. A \emph{corridor scheme} for $\Gamma$ is a collection $\tau$ of labeled edges of $\Gamma$ such that every triangle of $\Delta_{\Gamma}$ has either zero or two edges in $\tau$. Given a van Kampen diagram $\Delta$, a \emph{$\tau$-corridor} is a corridor in $\Delta$ that consists of triangles; each triangle has exactly two edges in $\sigma$, and every pair of adjacent triangles intersect at an edge in $\sigma$. If $\alpha$ and $\beta$ are two disjoint corridor schemes, then $\alpha$-corridors and $\beta$-corridors never cross each other. If $\Gamma$ is oriented, then each triangle in a diagram $\Delta$ is orientated. 

Recalling the fact we only have to establish the cubic upper bound for $d=1$ in Theorem \ref{Dehn functions for square boundary} and Lemma \ref{The graph of square boundary and d=1}, the graph $\Gamma$ in the rest of this subsection will be the suspension of a path of length at least $3$ with orientations and labels on the edges given by Figure \ref{The suspension of a path of length at least $3$ with orientation.}

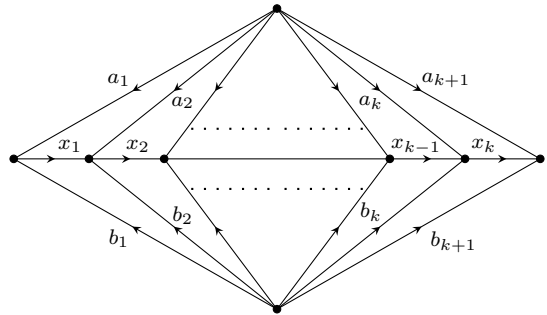
\begin{figure}[H]
\begin{center}
	\begin{tikzpicture}[scale=0.5]
	
	\draw [middlearrow={stealth}] (-7,0)--(-5,0);
	\draw [middlearrow={stealth}] (-5,0)--(-3,0);
	\draw (-3,0)--(0,0);
	\draw (0,0)--(3,0);
	\draw [middlearrow={stealth}] (3,0)--(5,0);
	\draw [middlearrow={stealth}] (5,0)--(7,0);
	
	\draw [middlearrow={stealth}] (0,4)--(-7,0);
	\draw [middlearrow={stealth}] (0,4)--(-5,0);
	\draw [middlearrow={stealth}] (0,4)--(-3,0);
	\draw [middlearrow={stealth}] (0,4)--(3,0);
	\draw [middlearrow={stealth}] (0,4)--(5,0);
	\draw [middlearrow={stealth}] (0,4)--(7,0);
	
	\draw [middlearrow={stealth}] (0,-4)--(-7,0);
	\draw [middlearrow={stealth}] (0,-4)--(-5,0);
	\draw [middlearrow={stealth}] (0,-4)--(-3,0);
	\draw [middlearrow={stealth}] (0,-4)--(3,0);
	\draw [middlearrow={stealth}] (0,-4)--(5,0);
	\draw [middlearrow={stealth}] (0,-4)--(7,0);
	
	\draw [thick, loosely dotted] (-0.2,0.8)--(-2.3,0.8);
	\draw [thick, loosely dotted] (0.2,0.8)--(2.3,0.8);
	\draw [thick, loosely dotted] (-0.2,-0.8)--(-2.3,-0.8);
	\draw [thick, loosely dotted] (0.2,-0.8)--(2.3,-0.8);

	\draw [fill] (0,4) circle [radius=0.1];
	\draw [fill] (-7,0) circle [radius=0.1];
	\draw [fill] (-5,0) circle [radius=0.1];
	\draw [fill] (-3,0) circle [radius=0.1];
	\draw [fill] (3,0) circle [radius=0.1];
	\draw [fill] (5,0) circle [radius=0.1];
	\draw [fill] (7,0) circle [radius=0.1];
	\draw [fill] (0,-4) circle [radius=0.1];

	\node at (-4.2,2.1) {\tiny $a_{1}$};
	\node at (-2.5,1.5) {\tiny $a_{2}$};
	\node at (2.5,1.5) {\tiny $a_{k}$};
	\node at (4.5,2.1) {\tiny $a_{k+1}$};

	\node at (-5.5,0.35) {\tiny $x_{1}$};
	\node at (-3.7,0.35) {\tiny $x_{2}$};
	\node at (3.7,0.32) {\tiny $x_{k-1}$};
	\node at (5.5,0.35) {\tiny $x_{k}$};
	
	\node at (-4.2,-2.1) {\tiny $b_{1}$};
	\node at (-2.5,-1.5) {\tiny $b_{2}$};
	\node at (2.5,-1.4) {\tiny $b_{k}$};
	\node at (4.7,-2.2) {\tiny $b_{k+1}$};
	
	\end{tikzpicture}
\end{center}
\caption{The graph $\Gamma$ with orientations and labels.}
\label{The suspension of a path of length at least $3$ with orientation.}
\end{figure}
The Dicks--Leary presentation (see Theorem \ref{Dicks and Leary presentation}) for $H_{\Gamma}$ is $\langle\mathcal{A}, \mathcal{B}, \mathcal{X}\vert\mathcal{R}\rangle$, where

\begin{align*}
\mathcal{A}&=\lbrace a_{1},\cdots,a_{k+1}\rbrace  \\
\mathcal{B}&=\lbrace b_{1},\cdots,b_{k+1}\rbrace  \\
\mathcal{X}&=\lbrace x_{1},\cdots,x_{k}\rbrace    \\
\mathcal{R}&=\lbrace a_{i}x_{i}=a_{i+1}=x_{i}a_{i}, \  b_{i}x_{i}=b_{i+1}=x_{i}b_{i}\rbrace, \ i=1,\cdots,k 
\end{align*}

Let $w$ be a freely reduced word of length at most $n$ that represents the identity in $H_{\Gamma}$ and $\Delta$ a minimal van Kampen diagram for $w$. Choose two disjoint corridor schemes $\alpha=\lbrace a_{1},\cdots,a_{k+1}\rbrace$ and $\beta=\lbrace b_{1},\cdots,b_{k+1}\rbrace$ for $\Gamma$. Note that $\alpha$-corridors and $\beta$-corridors never cross each other. For each corridor in $\Delta$, fix a vertex $p$ of the corridor that is on $\partial\Delta$. When reading along the boundary of a corridor starting from $p$, we get a boundary word $u'v''u''v'$ or its cyclic permutation, where $u',u''$ are letters in $\alpha$ or $\beta$; $v',v''$ are words in the free group $F(x_{1},\cdots,x_{k})$. Note that $u'$ and $u''$ are labelings of edges on $\partial\Delta$.

\begin{figure}[H]
\begin{center}
\begin{tikzpicture}[scale=0.5]
\draw (0,0)--(10,0)--(10,2)--(0,2)--(0,0);

\node [below] at (5,0) {\tiny $v''$};
\node [above] at (5,2) {\tiny $v'$};
\node [left] at (0,1) {\tiny $u'$};
\node [right] at (10,1) {\tiny $u''$};

\draw [fill] (0,2) circle [radius=0.1];

\node [above left] at (0,2) {\tiny $p$};

\end{tikzpicture}
\end{center}
\caption{Boundary of a single corridor.}
\label{boundary of a single corridor}
\end{figure}
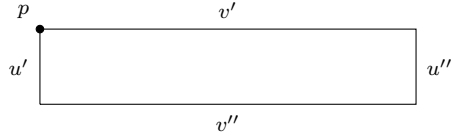

\begin{remark}
Since $\Gamma$ is an oriented simplicial graph, all the corridors and triangles of a diagram $\Delta$ are oriented. For the pictures in the rest of this subsection, we omit the arrows on some of the edges for simplicity. Since each edge is labeled with a letter, and consecutive edges form a path, we don't distinguish letters and edges, and words and paths.  
\end{remark}

\begin{definition}\label{definition of a stack}
Let $C_{i}$, $1\leq i\leq h$, be a corridor in $\Delta$ with the boundary word $u'_{i}v''_{i}u''_{i}v'_{i}$ (or its cyclic permutation) where $u'_{i},u''_{i}$ are letters in $\alpha$ or $\beta$; $v'_{i},v''_{i}$ are words in the free group $F(x_{1},\cdots,x_{k})$; see Figure \ref{boundary of a single corridor}. If $u'_{1},\cdots,u'_{h}$ and $u''_{1},\cdots,u''_{h}$ are two sets of consecutive letters on $\partial\Delta$ and $v''_{i}=v'_{i+1}$ for $i=1,\cdots h-1$, then the subdiagram $T$ of $\Delta$ obtained by gluing corridors $C_{1},\cdots,C_{h}$ along the words $v''_{1}=v'_{2},\cdots, v''_{h-1}=v'_{h}$ is called a \emph{stack}. The shorter word of the words $v'_{1}$ and $v''_{h}$ is called the \emph{top} of $T$; the longer word of the words $v'_{1}$ and $v''_{h}$ is called the \emph{bottom} of $T$. If $v'_{1}$ and $v''_{h}$ have the same length, then we call either one the top and the other one the bottom. The words $u'_{1}\cdots u'_{h}$ and $u''_{1}\cdots u''_{h}$ are called the \emph{legs} of $T$, and the number $h$ is called the \emph{height} of $T$.
\end{definition}

\begin{remark}
For two consecutive corridors $C_{i}$ and $C_{i+1}$ as defined in the Definition \ref{definition of a stack}, every $2$-cell of $\Delta$ with an edge labeled with a letter in the word $v''_{i}=v'_{i+1}$ must be part of either $C_{i}$ or $C_{i+1}$. Otherwise, it would violate the minimality assumption on $\Delta$.
\end{remark}

Roughly speaking, a stack is a pile of corridors where one corridor sits on top of another. Note that the top and bottom of a stack are not parts of $\partial\Delta$, and they might have different lengths; the legs are parts of $\partial\Delta$, and they are not necessarily straight. Figure \ref{A simple example of a stack} shows an example of a stack. For simplicity, we draw a stack with straight legs.

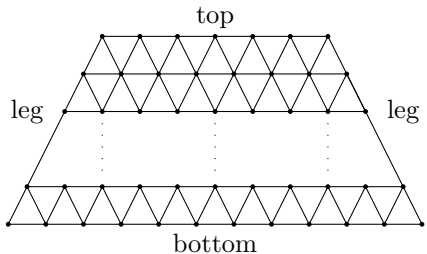
\begin{figure}[H]
\begin{center}
\begin{tikzpicture}[scale=0.5]
\draw (0,0)--(6,0);

\draw (0,0)--(-0.5,-1);
\draw (6,0)--(6.5,-1);

\draw (-0.5,-1)--(6.5,-1);

\draw (-0.5,-1)--(-1,-2);
\draw (6.5,-1)--(7,-2);

\draw (-1,-2)--(7,-2);

\draw (-1,-2)--(-2,-4);
\draw (7,-2)--(8,-4);

\draw (-2,-4)--(8,-4);

\draw (-2,-4)--(-2.5,-5);
\draw (8,-4)--(8.5,-5);

\draw (-2.5,-5)--(8.5,-5);

\draw [fill] (0,0) circle [radius=0.05];
\draw [fill] (1,0) circle [radius=0.05];
\draw [fill] (2,0) circle [radius=0.05];
\draw [fill] (3,0) circle [radius=0.05];
\draw [fill] (4,0) circle [radius=0.05];
\draw [fill] (5,0) circle [radius=0.05];
\draw [fill] (6,0) circle [radius=0.05];

\draw (0,0)--(0.5,-1);
\draw (0.5,-1)--(1,0);
\draw (1,0)--(1.5,-1);
\draw (1.5,-1)--(2,0);
\draw (2,0)--(2.5,-1);
\draw (2.5,-1)--(3,0);
\draw (3,0)--(3.5,-1);
\draw (3.5,-1)--(4,0);
\draw (4,0)--(4.5,-1);
\draw (4.5,-1)--(5,0);
\draw (5,0)--(5.5,-1);
\draw (5.5,-1)--(6,0);

\draw [fill] (-0.5,-1) circle [radius=0.05];
\draw [fill] (0.5,-1) circle [radius=0.05];
\draw [fill] (1.5,-1) circle [radius=0.05];
\draw [fill] (2.5,-1) circle [radius=0.05];
\draw [fill] (3.5,-1) circle [radius=0.05];
\draw [fill] (4.5,-1) circle [radius=0.05];
\draw [fill] (5.5,-1) circle [radius=0.05];
\draw [fill] (6.5,-1) circle [radius=0.05];

\draw (-0.5,-1)--(0,-2);
\draw (0,-2)--(0.5,-1);
\draw (0.5,-1)--(1,-2);
\draw (1,-2)--(1.5,-1);
\draw (1.5,-1)--(2,-2);
\draw (2,-2)--(2.5,-1);
\draw (2.5,-1)--(3,-2);
\draw (3,-2)--(3.5,-1);
\draw (3.5,-1)--(4,-2);
\draw (4,-2)--(4.5,-1);
\draw (4.5,-1)--(5,-2);
\draw (5,-2)--(5.5,-1);
\draw (5.5,-1)--(6,-2);
\draw (6,-2)--(6.5,-1);
\draw (6.5,-1)--(7,-2);

\draw [fill] (-1,-2) circle [radius=0.05];
\draw [fill] (0,-2) circle [radius=0.05];
\draw [fill] (1,-2) circle [radius=0.05];
\draw [fill] (2,-2) circle [radius=0.05];
\draw [fill] (3,-2) circle [radius=0.05];
\draw [fill] (4,-2) circle [radius=0.05];
\draw [fill] (5,-2) circle [radius=0.05];
\draw [fill] (6,-2) circle [radius=0.05];
\draw [fill] (7,-2) circle [radius=0.05];

\draw [loosely dotted] (0,-2)--(0,-3.8);
\draw [loosely dotted] (3,-2)--(3,-3.8);
\draw [loosely dotted] (6,-2)--(6,-3.8);

\draw [fill] (-2,-4) circle [radius=0.05];
\draw [fill] (-1,-4) circle [radius=0.05];
\draw [fill] (0,-4) circle [radius=0.05];
\draw [fill] (1,-4) circle [radius=0.05];
\draw [fill] (2,-4) circle [radius=0.05];
\draw [fill] (3,-4) circle [radius=0.05];
\draw [fill] (4,-4) circle [radius=0.05];
\draw [fill] (5,-4) circle [radius=0.05];
\draw [fill] (6,-4) circle [radius=0.05];
\draw [fill] (7,-4) circle [radius=0.05];
\draw [fill] (8,-4) circle [radius=0.05];

\draw (-2,-4)--(-1.5,-5);
\draw (-1.5,-5)--(-1,-4);
\draw (-1,-4)--(-0.5,-5);
\draw (-0.5,-5)--(0,-4);
\draw (0,-4)--(0.5,-5);
\draw (0.5,-5)--(1,-4);
\draw (1,-4)--(1.5,-5);
\draw (1.5,-5)--(2,-4);
\draw (2,-4)--(2.5,-5);
\draw (2.5,-5)--(3,-4);
\draw (3,-4)--(3.5,-5);
\draw (3.5,-5)--(4,-4);
\draw (4,-4)--(4.5,-5);
\draw (4.5,-5)--(5,-4);
\draw (5,-4)--(5.5,-5);
\draw (5.5,-5)--(6,-4);
\draw (6,-4)--(6.5,-5);
\draw (6.5,-5)--(7,-4);
\draw (7,-4)--(7.5,-5);
\draw (7.5,-5)--(8,-4);

\draw [fill] (-2.5,-5) circle [radius=0.05];
\draw [fill] (-1.5,-5) circle [radius=0.05];
\draw [fill] (-0.5,-5) circle [radius=0.05];
\draw [fill] (0.5,-5) circle [radius=0.05];
\draw [fill] (1.5,-5) circle [radius=0.05];
\draw [fill] (2.5,-5) circle [radius=0.05];
\draw [fill] (3.5,-5) circle [radius=0.05];
\draw [fill] (4.5,-5) circle [radius=0.05];
\draw [fill] (5.5,-5) circle [radius=0.05];
\draw [fill] (6.5,-5) circle [radius=0.05];
\draw [fill] (7.5,-5) circle [radius=0.05];
\draw [fill] (8.5,-5) circle [radius=0.05];

\node at (3,0.5) {\footnotesize top};
\node at (8,-2) {\footnotesize leg};
\node at (-2,-2) {\footnotesize leg};
\node at (3,-5.5) {\footnotesize bottom};
\end{tikzpicture}
\end{center}
\caption{A simple example of a stack.}
\label{A simple example of a stack}
\end{figure}

\begin{definition}
Let $C$ be either a single $\alpha$-corridor or $\beta$-corridor with boundary word $u'v''u''v'$ (or its cyclic permutation), as shown in Figure \ref{boundary of a single corridor}. We say that a vertex $q$ on the boundary of $C$ labeled by the word $v'$ (respectively $v''$) is a \emph{$j$-vertex}, or $q$ has \emph{type $j$}, if there are $j+1$ edges connecting $q$ to $j+1$ distinct consecutive vertices on the part of boundary of $C$ labeled by the word $v''$ (respectively $v'$).
\begin{figure}[H]
\begin{center}
\begin{tikzpicture}[scale=0.4]

\draw (-10,4)--(-8,4);
\draw (-8,4)--(0,4)--(8,4);
\draw (8,4)--(10,4);

\draw (-8,4)--(-5,0);
\draw (8,4)--(5,0);
\draw (0,4)--(-5,0);
\draw (0,4)--(-3,0);
\draw (0,4)--(3,0);
\draw (0,4)--(5,0);

\draw [fill] (-8,4) circle [radius=0.1];
\draw [fill] (0,4) circle [radius=0.1];
\draw [fill] (8,4) circle [radius=0.1];
\draw [fill] (-5,0) circle [radius=0.1];
\draw [fill] (-3,0) circle [radius=0.1];
\draw [fill] (3,0) circle [radius=0.1];
\draw [fill] (5,0) circle [radius=0.1];

\draw [thick, loosely dotted] (-2,1)--(2,1);

\draw (-10,0)--(-5,0);
\draw (-5,0)--(5,0);
\draw (5,0)--(10,0);

\node [above] at (0,4) {\tiny $j$};
\node [below] at (-5,0) {\tiny $v_{1}$};
\node [below] at (-3,0) {\tiny $v_{2}$};
\node [below] at (3,0) {\tiny $v_{j}$};
\node [below] at (5,0) {\tiny $v_{j+1}$};
\end{tikzpicture}
\end{center}
\caption{Picture of a $j$-vertex. The number $j$ around the vertex indicates that the vertex is a $j$-vertex.}
\label{j-vertex}
\end{figure}
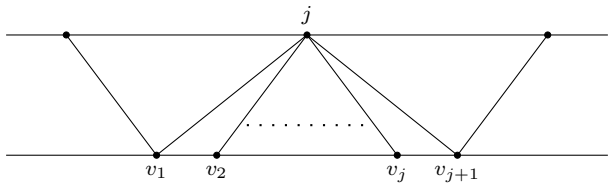
We say that a pair of adjacent vertices $q_{1}$ and $q_{2}$ on the boundary of $C$ labeled by the word $v'$ (respectively $v''$) \emph{generate} a vertex $q_{3}$ in $v''$ (respectively $v'$) if there is a triangle shown as follows:

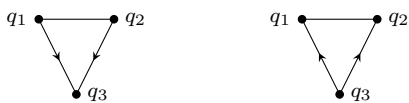
\begin{figure}[H]
\begin{center}
\begin{tikzpicture}[scale=0.5]


\draw (5,2)--(7,2);

\draw [middlearrow={stealth}] (5,2)--(6,0);
\draw [middlearrow={stealth}] (7,2)--(6,0);

\draw [fill] (5,2) circle [radius=0.1];
\draw [fill] (6,0) circle [radius=0.1];
\draw [fill] (7,2) circle [radius=0.1];

\node [left] at (5,2) {\tiny $q_{1}$};
\node [right] at (7,2) {\tiny $q_{2}$};
\node [right] at (6,0) {\tiny $q_{3}$};


\draw (12,2)--(14,2);

\draw [middlearrow={stealth reversed}] (12,2)--(13,0);
\draw [middlearrow={stealth reversed}] (14,2)--(13,0);

\draw [fill] (12,2) circle [radius=0.1];
\draw [fill] (13,0) circle [radius=0.1];
\draw [fill] (14,2) circle [radius=0.1];

\node [left] at (12,2) {\tiny $q_{1}$};
\node [right] at (14,2) {\tiny $q_{2}$};
\node [right] at (13,0) {\tiny $q_{3}$};

\end{tikzpicture}
\end{center}
\caption{A pair of adjacent vertices $p_{1}$ and $p_{2}$ generates $w$.}
\end{figure}
\end{definition}

\begin{lemma}\label{stacks are cubic}
Let $\Gamma$ be the graph as shown in Figure \ref{The suspension of a path of length at least $3$ with orientation.}. Let $w\in H_{\Gamma}$ be a freely reduced word that represents the identity. Let $T$ be a stack in a minimal van Kampen diagram $\Delta$ for $w$ such that the top of $T$ has length $l$ and the height of $T$ is $h$. Then the area of $T$ satisfies
\begin{align*}
\text{Area}(T)\leq K(lh^{2}+h^{3}),
\end{align*} 
where $K$ is a positive constant that does not depend on $l$ and $h$.
\end{lemma}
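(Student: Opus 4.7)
The plan is to proceed by strong induction on the height $h$, splitting off the top corridor $C_1$ and applying the inductive hypothesis to the sub-stack $T' = C_2 \cup \cdots \cup C_h$ of height $h-1$, whose top word $v'_2 = v''_1$ has length $l_2$.

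The central quantitative ingredient will be a \emph{width-growth inequality} for a single corridor. Using the $j$-vertex bookkeeping just defined, let $V_0, V_1, \ldots, V_{l_i}$ denote the top vertices of $C_i$ with $j$-types $j_{V_0}, \ldots, j_{V_{l_i}}$. Every apex-on-top fan at $V_r$ consists of $j_{V_r}$ triangles, each of which places one generated vertex onto the bottom boundary, and hence $\sum_r j_{V_r} = l_{i+1}$. At each top vertex $V$, the rungs of the fan are $a$-edges whose indices trace a walk in $\{1,\ldots,k+1\}$; minimality of $\Delta$ prevents this walk from backtracking immediately, since a backtrack would produce a pair of adjacent triangles in $C_i$ forming a cancelling pair. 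Coupling this local constraint with the compatibility forced at the shared word $v''_i = v'_{i+1}$ (every bottom vertex of $C_i$ is simultaneously a top vertex of $C_{i+1}$, carrying its own $j$-type), I plan to establish
\[
l_{i+1} \leq l_i + C h,
\]
where $C = C(\Gamma)$ depends only on the defining graph; the extra $Ch$ absorbs the two leg contributions and the possible excess coming from the $h$ fans lined up vertically above the top vertices of $C_i$.

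Iterating this inequality yields $l_i \leq l + Ch(i-1) \leq l + Ch^2$ for every $i \in \{1,\ldots,h+1\}$, and substituting into $\text{Area}(T) = \sum_{i=1}^{h}(l_i + l_{i+1}) \leq 2\sum_{i=1}^{h+1} l_i$ gives, after elementary arithmetic,
\[
\text{Area}(T) \leq 2hl + Ch^3 \leq K(lh^2 + h^3)
\]
for a suitable $K = K(\Gamma)$, using $2hl \leq 2lh^2$ for $h \geq 1$. The base case $h = 1$ reduces to the width-growth inequality with $i = 1$, giving $\text{Area}(T) = l + l_2 \leq 2l + C$, which is dominated by $K(l + 1)$.

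The main obstacle I anticipate is the width-growth inequality itself. A purely local minimality argument at a single top vertex rules out only immediate backtracking in the fan walk and gives at best $l_{i+1} \leq k\, l_i + k$, which iterates to exponential growth in $h$ and is far too weak. To obtain the correct linear-in-$h$ dependence, one must use a \emph{global} accounting that couples the fan structure on $C_i$'s top with the fan structures at the same vertices on $C_{i-1}$'s bottom (and with $C_{i+1}$'s top), so that any persistent backtracking running vertically through the stack can be traced through $\Delta$ and ruled out by the minimality assumption. It is this non-local coupling between adjacent corridors, together with a careful attribution of any residual width-growth to the two boundary legs on $\partial\Delta$, that I expect will produce the linear term $Ch$ in the width-growth bound.
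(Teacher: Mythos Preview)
Your overall framework---bound the per-level width growth $l_{i+1}-l_i$ and sum---is the same as the paper's. The gap is in the width-growth inequality itself: the bound $l_{i+1}\le l_i + Ch$ with $C=C(\Gamma)$ independent of $l$ is \emph{false}, and no amount of global coupling between corridors will produce it. The reason is that $2$-vertices persist. In the paper's vertex-type analysis (Lemmas~4.7--4.9), every $2$-vertex on $t_i$ creates a $2$-vertex on $t_{i+1}$, and each $2$-vertex contributes $2$ rather than $1$ to the next width. So if $t_0$ carries $\Theta(l)$ vertices of type $\ge 2$---which is realizable, e.g.\ a long word like $(x_1x_2\cdots x_k)^N$ has a $2$-vertex at every $x_m\to x_{m+1}$ juncture---then the number of $2$-vertices stays $\Theta(l)$ at every level, and hence $l_{i+1}-l_i \ge c\,l$ for all $i$. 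Your inequality would force $l_h \le l + Ch^2$, whereas in this situation $l_h \ge l(1+ch)$; for $l\gg h$ these are incompatible.

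What the paper actually proves is the correct per-level growth $|t_{i+1}|-|t_{i-1}| = 2(k+1)l + 4(k-1)i + O(1)$, i.e.\ roughly $l_{i+1}-l_i \le C(l+i)$. Iterating gives $l_i \le C(l i + i^2)$, and summing over $i\le h$ yields the stated $K(lh^2+h^3)$. The $lh^2$ term is genuine---it cannot be improved to $lh$---and your proposed inequality would give exactly that improvement, so it cannot hold. The substance of the proof is not a global minimality argument but the purely local classification: after one corridor all interior vertices on $t_{i+1}$ are of type $1$, $2$, or $3$; after two corridors all interior vertices on $t_{i+2}$ are of type $1$ or $2$; and the $2$-vertex count increases only by $2(k-1)$ per level beyond whatever was seeded by $t_0$. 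That classification is what you are missing.
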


The proof of Lemma \ref{stacks are cubic} relies on a series of lemmas that analyze the boundaries of corridors of a stack. Since all the van Kampen diagrams are assumed to be minimal, in Lemma \ref{stacks are cubic} each vertex on the boundary of a corridor is of type at most $k$ for a fixed $k\geq 3$. We label the boundary words of corridors of a stack $T$ as shown in Figure \ref{Boundary words of corridors in a stack $T$.}:
\begin{figure}[H]
\begin{center}
\begin{tikzpicture}[scale=0.6]
\draw (0,10)--(10,10);
\node [above] at (5,10) {\tiny $t_{0}$};
\draw [fill] (0,10) circle [radius=0.05];
\draw [fill] (10,10) circle [radius=0.05];

\draw (0,10)--(0,8.5);
\node [left] at (0,9.25) {\tiny $u'_{1}$};
\draw (10,10)--(10,8.5);
\node [right] at (10,9.25) {\tiny $u''_{1}$};

\draw (0,8.5)--(10,8.5);
\node [above] at (5,8.5) {\tiny $t_{1}$};
\draw [fill] (0,8.5) circle [radius=0.05];
\draw [fill] (10,8.5) circle [radius=0.05];

\draw (0,8.5)--(0,7);
\node [left] at (0,7.75) {\tiny $u'_{2}$};
\draw (10,8.5)--(10,7);
\node [right] at (10,7.75) {\tiny $u''_{2}$};

\draw (0,7)--(10,7);
\node [above] at (5,7) {\tiny $t_{2}$};
\draw [fill] (0,7) circle [radius=0.05];
\draw [fill] (10,7) circle [radius=0.05];

\draw [thick, loosely dotted] (-0.5,6.5)--(-0.5,3.5);
\draw (0,7)--(0,3);
\draw [thick, loosely dotted] (5,6.5)--(5,4);
\draw (10,7)--(10,3);
\draw [thick, loosely dotted] (10.5,6.5)--(10.5,3.5);

\draw (0,3)--(10,3);
\node [above] at (5,3) {\tiny $t_{h-2}$};
\draw [fill] (0,3) circle [radius=0.05];
\draw [fill] (10,3) circle [radius=0.05];

\draw (0,3)--(0,1.5);
\node [left] at (0,2.25) {\tiny $u'_{h-1}$};
\draw (10,3)--(10,1.5);
\node [right] at (10,2.25) {\tiny $u''_{h-1}$};

\draw (0,1.5)--(10,1.5);
\node [above] at (5,1.5) {\tiny $t_{h-1}$};
\draw [fill] (0,1.5) circle [radius=0.05];
\draw [fill] (10,1.5) circle [radius=0.05];

\draw (0,1.5)--(0,0);
\node [left] at (0,0.75) {\tiny $u'_{h}$};
\draw (10,1.5)--(10,0);
\node [right] at (10,0.75) {\tiny $u''_{h}$};

\draw (0,0)--(10,0);
\node [above] at (5,0) {\tiny $t_{h}$};
\draw [fill] (0,0) circle [radius=0.05];
\draw [fill] (10,0) circle [radius=0.05];
\end{tikzpicture}
\end{center}
\caption{Boundary words of corridors in a stack $T$.}
\label{Boundary words of corridors in a stack $T$.}
\end{figure}
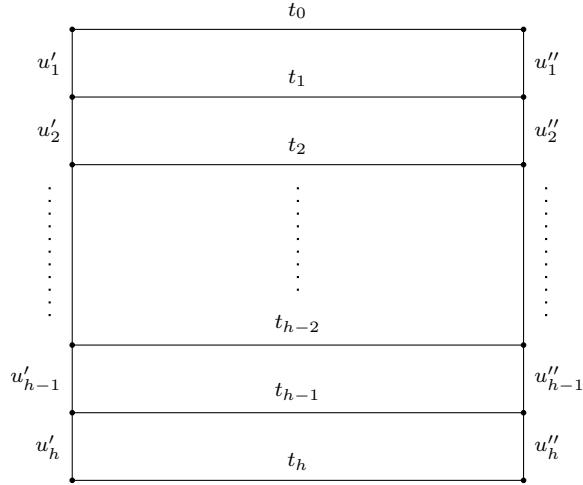

Let's look at a single corridor $C_{i}$ as shown in Figure \ref{A single corridor with boundary words.}:

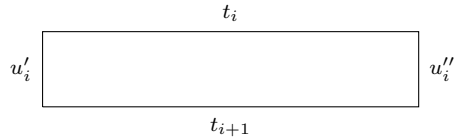
\begin{figure}[H]
\begin{center}
\begin{tikzpicture}[scale=0.5]
\draw (0,0)--(10,0)--(10,2)--(0,2)--(0,0);

\node [below] at (5,0) {\tiny $t_{i+1}$};
\node [above] at (5,2) {\tiny $t_{i}$};
\node [left] at (0,1) {\tiny $u'_{i}$};
\node [right] at (10,1) {\tiny $u''_{i}$};
\end{tikzpicture}
\end{center}
\caption{A single corridor with boundary words.}
\label{A single corridor with boundary words.}
\end{figure}

\noindent Each edge $x_{m}$ of $t_{i}$, $1\leq m\leq k$, is a part of the boundary of a triangle in the interior of $C_{i}$:

\begin{figure}[H]
\begin{center}
\begin{tikzpicture}[scale=0.5]
\draw (0,0)--(10,0)--(10,2)--(0,2)--(0,0);

\node [left] at (0,1) {\tiny $u'_{i}$};
\node [right] at (10,1) {\tiny $u''_{i}$};

\draw (4,2)--(5,0);
\draw (6,2)--(5,0);

\node [above] at (5,2) {\tiny $x_{m}$};

\draw [fill] (4,2) circle [radius=0.1];
\draw [fill] (6,2) circle [radius=0.1];
\draw [fill] (5,0) circle [radius=0.1];
\end{tikzpicture}
\end{center}
\caption{}
\label{}
\end{figure}
\noindent Depending on $C_{i}$ is an $\alpha$-corridor or $\beta$-corridor, the orientation of $x_{m}$, and the orientation of $u'_{i}, u''_{i}$ (they have the same orientation), we have the following eight possibilities:
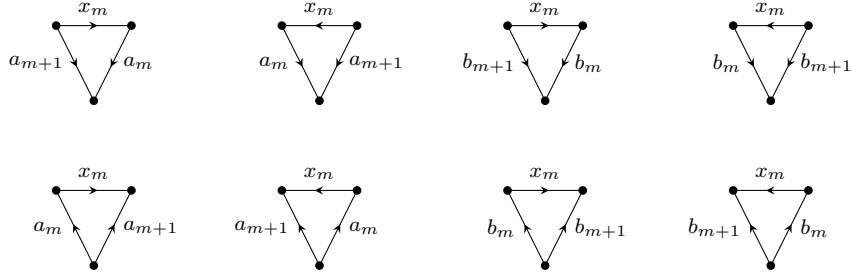
\begin{figure}[H]
\begin{center}
\begin{tikzpicture}[scale=0.5]
\draw [middlearrow={stealth}] (0,2)--(2,2);
\draw [middlearrow={stealth}] (0,2)--(1,0);
\draw [middlearrow={stealth}] (2,2)--(1,0);

\node [above] at (1,2) {\tiny $x_{m}$};
\node [left] at (0.5,1) {\tiny $a_{m+1}$};
\node [right] at (1.5,1) {\tiny $a_{m}$};

\draw [fill] (0,2) circle [radius=0.1];
\draw [fill] (2,2) circle [radius=0.1];
\draw [fill] (1,0) circle [radius=0.1];

\draw [middlearrow={stealth reversed}] (6,2)--(8,2);
\draw [middlearrow={stealth}] (6,2)--(7,0);
\draw [middlearrow={stealth}] (8,2)--(7,0);

\node [above] at (7,2) {\tiny $x_{m}$};
\node [left] at (6.5,1) {\tiny $a_{m}$};
\node [right] at (7.5,1) {\tiny $a_{m+1}$};

\draw [fill] (6,2) circle [radius=0.1];
\draw [fill] (8,2) circle [radius=0.1];
\draw [fill] (7,0) circle [radius=0.1];

\draw [middlearrow={stealth}] (12,2)--(14,2);
\draw [middlearrow={stealth}] (12,2)--(13,0);
\draw [middlearrow={stealth}] (14,2)--(13,0);

\node [above] at (13,2) {\tiny $x_{m}$};
\node [left] at (12.5,1) {\tiny $b_{m+1}$};
\node [right] at (13.5,1) {\tiny $b_{m}$};

\draw [fill] (12,2) circle [radius=0.1];
\draw [fill] (14,2) circle [radius=0.1];
\draw [fill] (13,0) circle [radius=0.1];

\draw [middlearrow={stealth reversed}] (18,2)--(20,2);
\draw [middlearrow={stealth}] (18,2)--(19,0);
\draw [middlearrow={stealth}] (20,2)--(19,0);

\node [above] at (19,2) {\tiny $x_{m}$};
\node [left] at (18.5,1) {\tiny $b_{m}$};
\node [right] at (19.5,1) {\tiny $b_{m+1}$};

\draw [fill] (18,2) circle [radius=0.1];
\draw [fill] (20,2) circle [radius=0.1];
\draw [fill] (19,0) circle [radius=0.1];

\end{tikzpicture}
\end{center}

\

\begin{center}
\begin{tikzpicture}[scale=0.5]
\draw [middlearrow={stealth}] (0,2)--(2,2);
\draw [middlearrow={stealth reversed}] (0,2)--(1,0);
\draw [middlearrow={stealth reversed}] (2,2)--(1,0);

\node [above] at (1,2) {\tiny $x_{m}$};
\node [left] at (0.5,1) {\tiny $a_{m}$};
\node [right] at (1.5,1) {\tiny $a_{m+1}$};

\draw [fill] (0,2) circle [radius=0.1];
\draw [fill] (2,2) circle [radius=0.1];
\draw [fill] (1,0) circle [radius=0.1];

\draw [middlearrow={stealth reversed}] (6,2)--(8,2);
\draw [middlearrow={stealth reversed}] (6,2)--(7,0);
\draw [middlearrow={stealth reversed}] (8,2)--(7,0);

\node [above] at (7,2) {\tiny $x_{m}$};
\node [left] at (6.5,1) {\tiny $a_{m+1}$};
\node [right] at (7.5,1) {\tiny $a_{m}$};

\draw [fill] (6,2) circle [radius=0.1];
\draw [fill] (8,2) circle [radius=0.1];
\draw [fill] (7,0) circle [radius=0.1];

\draw [middlearrow={stealth}] (12,2)--(14,2);
\draw [middlearrow={stealth reversed}] (12,2)--(13,0);
\draw [middlearrow={stealth reversed}] (14,2)--(13,0);

\node [above] at (13,2) {\tiny $x_{m}$};
\node [left] at (12.5,1) {\tiny $b_{m}$};
\node [right] at (13.5,1) {\tiny $b_{m+1}$};

\draw [fill] (12,2) circle [radius=0.1];
\draw [fill] (14,2) circle [radius=0.1];
\draw [fill] (13,0) circle [radius=0.1];

\draw [middlearrow={stealth reversed}] (18,2)--(20,2);
\draw [middlearrow={stealth reversed}] (18,2)--(19,0);
\draw [middlearrow={stealth reversed}] (20,2)--(19,0);

\node [above] at (19,2) {\tiny $x_{m}$};
\node [left] at (18.5,1) {\tiny $b_{m+1}$};
\node [right] at (19.5,1) {\tiny $b_{m}$};

\draw [fill] (18,2) circle [radius=0.1];
\draw [fill] (20,2) circle [radius=0.1];
\draw [fill] (19,0) circle [radius=0.1];

\end{tikzpicture}
\end{center}
\caption{The first row shows the cases where the arrows on $u'_{i}$ and $u''_{i}$ are pointing from $t_{i}$ to $t_{i+1}$; the second row shows the cases where the arrows on $u'_{i}$ and $u''_{i}$ are pointing from $t_{i+1}$ to $t_{i}$. The left two columns are the cases where $C_{i}$ is an $\alpha$-corridor; the right two columns are the cases where $C_{i}$ is a $\beta$-corridor.}
\label{cases for oriented 2-cells}
\end{figure}
\noindent Note that the boundary word of each triangle in Figure \ref{cases for oriented 2-cells} is a relator of $H_{\Gamma}$.

\begin{lemma}\label{Properties of one corridor in a stack}
Let $C_{i}$ be either an $\alpha$-corridor or $\beta$-corridor in a stack $T$ as shown in Figure \ref{Boundary words of corridors in a stack $T$.} and Figure \ref{A single corridor with boundary words.}.
\begin{enumerate}
\item[(1)] The area of $C_{i}$ is $|t_{i}|+|t_{i+1}|$.
\item[(2)] Suppose $|t_{i}|<|t_{i+1}|$, then 
\begin{align*}
|t_{i+1}|=\sum_{j}j\cdot\big|\lbrace\text{$j$-vertices on $t_{i}$}\rbrace\big|.
\end{align*}
\end{enumerate}
\end{lemma}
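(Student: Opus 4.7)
The plan is a double-counting argument built on the observation that each $2$-cell of $C_i$ is a relator triangle with exactly one $x$-edge and two edges in the corridor scheme $\alpha$ or $\beta$.

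For (1), I will note that the legs $u'_i$ and $u''_i$ are $\alpha$- or $\beta$-labeled and thus carry no $x$-letter, so the $x$-edge of every triangle of $C_i$ must lie on either $t_i$ or $t_{i+1}$. Conversely, every $x$-edge on $t_i$ or $t_{i+1}$ lies on the boundary of $C_i$ and therefore belongs to exactly one triangle of $C_i$. This sets up a bijection between the $2$-cells of $C_i$ and the $x$-edges on $t_i \cup t_{i+1}$, giving $\text{Area}(C_i) = |t_i| + |t_{i+1}|$.

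For (2), I will count the triangles of $C_i$ whose $x$-edge lies on $t_{i+1}$, which by (1) equals $|t_{i+1}|$. Each such triangle has its apex on $t_i$, so I partition these triangles according to their apex. Fix a $j$-vertex $q$ on $t_i$, with downward edges $qv_1, \ldots, qv_{j+1}$ ending at consecutive vertices of $t_{i+1}$. For each $1 \le r \le j$, the $x$-edge $v_r v_{r+1} \subset t_{i+1}$ together with $qv_r$ and $qv_{r+1}$ bounds a sub-disk of $C_i$ with exactly three boundary edges, which must be a single triangle of $C_i$. Conversely, any triangle of $C_i$ with apex $q$ and $x$-edge on $t_{i+1}$ has its $x$-edge joining two consecutive downward neighbors of $q$, because the only $C_i$-edges with both endpoints on $t_{i+1}$ are the $x$-edges of $t_{i+1}$ itself. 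Thus $q$ is the apex of exactly $j$ such triangles, and summing over all vertices on $t_i$ produces the desired identity.

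The argument is essentially routine; the only point requiring care is the final bijection between triangles apexed at $q$ and consecutive pairs of downward edges from $q$, and this reduces to the labelling restriction above. I also note that the hypothesis $|t_i| < |t_{i+1}|$ is not actually used in the proof of (2): the identity holds for every corridor $C_i$, though it is the growing case that will matter later in the proof of Lemma \ref{stacks are cubic}.
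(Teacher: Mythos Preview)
Your proof is correct and follows the same double-counting idea as the paper: each triangle of $C_i$ has a unique $x$-edge on $t_i\cup t_{i+1}$, giving (1), and each edge of $t_{i+1}$ is the $x$-edge of a unique triangle whose apex on $t_i$ is some $j$-vertex, giving (2). Your write-up is more detailed than the paper's one-line justifications, and your remark that the hypothesis $|t_i|<|t_{i+1}|$ is not actually used is correct.
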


\begin{proof} \
\begin{enumerate}
\item[(1)] Since each edge on $t_{i}$ and $t_{i+1}$ is part of a unique triangle in $C_{i}$, the result follows. 
\item[(2)] Since each $j$-vertex on $t_{i}$ contributes $j$ edges on $t_{i+1}$, the statement follows immediately. 
\end{enumerate}
\end{proof}

\begin{lemma}\label{Cases of 1-vertices}
Let $C_{i}$ be either an $\alpha$-corridor or $\beta$-corridor in a stack $T$ as shown in Figure \ref{Boundary words of corridors in a stack $T$.} and Figure \ref{A single corridor with boundary words.}. There are eight combinations of edges on $t_{i}$ (respectively $t_{i+1}$) that will create $1$-vertices on the $t_{i}$ (respectively $t_{i+1}$) for a given $m$, $2\leq m\leq k-1$, as shown in Figure \ref{Cases of 1 vertex}.
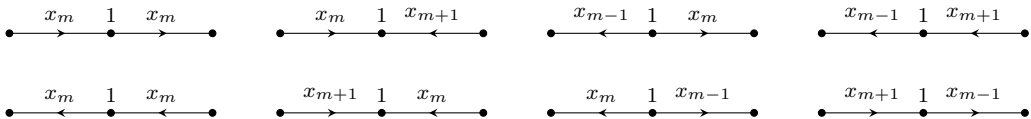
\begin{figure}[H]
\begin{center}
\begin{tikzpicture}[scale=0.9]
\draw [middlearrow={stealth}] (0,0)--(1.5,0);
\draw [middlearrow={stealth}] (1.5,0)--(3,0);

\node [above] at (0.75,0) {\tiny $x_{m}$};
\node [above] at (1.5,0) {\tiny $1$};
\node [above] at (2.25,0) {\tiny $x_{m}$};

\draw [fill] (0,0) circle [radius=0.05];
\draw [fill] (1.5,0) circle [radius=0.05];
\draw [fill] (3,0) circle [radius=0.05];

\draw [middlearrow={stealth}] (4,0)--(5.5,0);
\draw [middlearrow={stealth reversed}](5.5,0)--(7,0);

\node [above] at (4.75,0) {\tiny $x_{m}$};
\node [above] at (5.5,0) {\tiny $1$};
\node [above] at (6.25,0) {\tiny $x_{m+1}$};

\draw [fill] (4,0) circle [radius=0.05];
\draw [fill] (5.5,0) circle [radius=0.05];
\draw [fill] (7,0) circle [radius=0.05];

\draw [middlearrow={stealth reversed}] (8,0)--(9.5,0);
\draw [middlearrow={stealth}] (9.5,0)--(11,0);

\node [above] at (8.75,0) {\tiny $x_{m-1}$};
\node [above] at (9.5,0) {\tiny $1$};
\node [above] at (10.25,0) {\tiny $x_{m}$};

\draw [fill] (8,0) circle [radius=0.05];
\draw [fill] (9.5,0) circle [radius=0.05];
\draw [fill] (11,0) circle [radius=0.05];

\draw [middlearrow={stealth reversed}] (12,0)--(13.5,0);
\draw [middlearrow={stealth reversed}] (13.5,0)--(15,0);

\node [above] at (12.75,0) {\tiny $x_{m-1}$};
\node [above] at (13.5,0) {\tiny $1$};
\node [above] at (14.25,0) {\tiny $x_{m+1}$};

\draw [fill] (12,0) circle [radius=0.05];
\draw [fill] (13.5,0) circle [radius=0.05];
\draw [fill] (15,0) circle [radius=0.05];
\end{tikzpicture}

\
\

\begin{tikzpicture}[scale=0.9]
\draw [middlearrow={stealth reversed}] (0,0)--(1.5,0);
\draw [middlearrow={stealth reversed}] (1.5,0)--(3,0);

\node [above] at (0.75,0) {\tiny $x_{m}$};
\node [above] at (1.5,0) {\tiny $1$};
\node [above] at (2.25,0) {\tiny $x_{m}$};

\draw [fill] (0,0) circle [radius=0.05];
\draw [fill] (1.5,0) circle [radius=0.05];
\draw [fill] (3,0) circle [radius=0.05];

\draw [middlearrow={stealth}] (4,0)--(5.5,0);
\draw [middlearrow={stealth reversed}](5.5,0)--(7,0);

\node [above] at (4.75,0) {\tiny $x_{m+1}$};
\node [above] at (5.5,0) {\tiny $1$};
\node [above] at (6.25,0) {\tiny $x_{m}$};

\draw [fill] (4,0) circle [radius=0.05];
\draw [fill] (5.5,0) circle [radius=0.05];
\draw [fill] (7,0) circle [radius=0.05];

\draw [middlearrow={stealth reversed}] (8,0)--(9.5,0);
\draw [middlearrow={stealth}] (9.5,0)--(11,0);

\node [above] at (8.75,0) {\tiny $x_{m}$};
\node [above] at (9.5,0) {\tiny $1$};
\node [above] at (10.25,0) {\tiny $x_{m-1}$};

\draw [fill] (8,0) circle [radius=0.05];
\draw [fill] (9.5,0) circle [radius=0.05];
\draw [fill] (11,0) circle [radius=0.05];

\draw [middlearrow={stealth}] (12,0)--(13.5,0);
\draw [middlearrow={stealth}] (13.5,0)--(15,0);

\node [above] at (12.75,0) {\tiny $x_{m+1}$};
\node [above] at (13.5,0) {\tiny $1$};
\node [above] at (14.25,0) {\tiny $x_{m-1}$};

\draw [fill] (12,0) circle [radius=0.05];
\draw [fill] (13.5,0) circle [radius=0.05];
\draw [fill] (15,0) circle [radius=0.05];
\end{tikzpicture}
\end{center}
\caption{Possible combinations of edges that create $1$-vertices.}
\label{Cases of 1 vertex}
\end{figure}
\end{lemma}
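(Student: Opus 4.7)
The plan is to enumerate, via a direct case analysis, all local configurations of a $1$-vertex on $t_i$; the analysis for $t_{i+1}$ is identical by reflecting the picture. Fix such a vertex $q$ on $t_i$. By the definition of a $1$-vertex, there are exactly two edges of $\Delta$ joining $q$ to consecutive vertices $r_1, r_2$ on $t_{i+1}$; together with the edge $[r_1 r_2]$ on $t_{i+1}$, these form a single relator triangle $\Delta_0$ in $C_i$ whose apex is $q$ and whose base lies on $t_{i+1}$. By Theorem \ref{Dicks and Leary presentation}, the base of $\Delta_0$ is labeled $x_l$ for some $l$, and the two upward edges are labeled $\{a_l, a_{l+1}\}$ (or $\{b_l, b_{l+1}\}$, if $C_i$ is a $\beta$-corridor), placed at left and right according to the orientation of $x_l$.

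Now let $e_L$ and $e_R$ be the two edges of $t_i$ adjacent to $q$. Each is the top $x$-edge of a unique relator triangle in $C_i$ whose base is a single vertex on $t_{i+1}$; these are precisely the triangles shown in Figure \ref{cases for oriented 2-cells}. Call them $\Delta_L$ and $\Delta_R$. Minimality of $\Delta$ forces $\Delta_L$ and $\Delta_R$ to share their respective down edges with $\Delta_0$, so the right-down edge of $\Delta_L$ coincides with $[qr_1]$ and the left-down edge of $\Delta_R$ coincides with $[qr_2]$. Given the label $a_l$ (say) on $[qr_1]$, Figure \ref{cases for oriented 2-cells} shows that $e_L$ is then constrained to be either $x_l$ oriented rightward or $x_{l-1}$ oriented leftward; a symmetric statement determines $e_R$ from the label on $[qr_2]$.

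Organizing the case analysis by three independent binary choices (the orientation of $x_l$ in $\Delta_0$, the orientation of $e_L$ in $\Delta_L$, and the orientation of $e_R$ in $\Delta_R$) yields $2 \cdot 2 \cdot 2 = 8$ configurations. Setting $l = m$, the four cases with $x_l$ pointing rightward produce the top row of Figure \ref{Cases of 1 vertex} and the four cases with $x_l$ pointing leftward produce the bottom row. The hypothesis $2 \leq m \leq k-1$ ensures that all index choices $m-1, m, m+1$ appearing in these eight cases lie in the admissible range $\{1, \dots, k\}$. The argument for a $\beta$-corridor is identical with every $a$ replaced by $b$.

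The main obstacle is simply bookkeeping: one must verify that the left-down and right-down labels on the $a$-edges of each triangle in Figure \ref{cases for oriented 2-cells} are read off correctly from the relator $a_m x_m = a_{m+1}$, and then check that the three binary orientation choices cycle through Figure \ref{Cases of 1 vertex} exactly as expected. There is no conceptual difficulty beyond this check.
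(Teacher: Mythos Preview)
Your proof is correct and uses the same local case analysis as the paper's. The paper verifies one of the eight configurations directly (showing the two relator triangles on $e_L$ and $e_R$ force exactly two down-edges at the shared vertex) and declares the remaining cases similar, while you organize all eight at once via three binary orientation choices on $\Delta_0$, $\Delta_L$, $\Delta_R$; both arguments amount to reading off the admissible labels on $e_L$ and $e_R$ from the relator triangles in Figure~\ref{cases for oriented 2-cells}.
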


\begin{proof} We prove the case when $C_{i}$ is an $\alpha$-corridor; the case for a single $\beta$-corridor is similar. To see the conclusion, we only need to know how to fill $C_{i}$. Take the left-most picture in the first row of Figure \ref{Cases of 1 vertex}. Suppose the $x_{m}$'s are edges on either $t_{i}$ or $t_{i+1}$. By Figure \ref{cases for oriented 2-cells}, we have two choice to complete the triangles based on the $x_{m}$'s: 
\begin{figure}[H]
\begin{center}
\begin{tikzpicture}
\draw [middlearrow={stealth}] (0,0)--(1.5,0);
\draw [middlearrow={stealth}] (1.5,0)--(3,0);

\node [above] at (0.75,0) {\tiny $x_{m}$};
\node [above] at (1.5,0) {\tiny $1$};
\node [above] at (2.25,0) {\tiny $x_{m}$};

\draw [fill] (0,0) circle [radius=0.05];
\draw [fill] (1.5,0) circle [radius=0.05];
\draw [fill] (3,0) circle [radius=0.05];

\draw [middlearrow={stealth}] (0,0)--(0.75,-1.5);
\draw [middlearrow={stealth}] (1.5,0)--(0.75,-1.5);
\draw [middlearrow={stealth}] (1.5,0)--(2.25,-1.5);
\draw [middlearrow={stealth}] (3,0)--(2.25,-1.5);

\node [left] at (1.275,-0.6) {\tiny $a_{m}$};
\node [right] at (1.7,-0.6) {\tiny $a_{m+1}$};

\draw [fill] (0.75,-1.5) circle [radius=0.05];
\draw [fill] (2.25,-1.5) circle [radius=0.05];

\draw [middlearrow={stealth}] (0.75,-1.5)--(2.25,-1.5);

\node [below] at (1.5, -1.5) {\tiny $x_{m}$};

\draw [middlearrow={stealth}] (5,0)--(6.5,0);
\draw [middlearrow={stealth}] (6.5,0)--(8,0);

\node [above] at (5.75,0) {\tiny $x_{m}$};
\node [above] at (6.5,0) {\tiny $1$};
\node [above] at (7.25,0) {\tiny $x_{m}$};

\draw [fill] (5,0) circle [radius=0.05];
\draw [fill] (6.5,0) circle [radius=0.05];
\draw [fill] (8,0) circle [radius=0.05];

\draw [middlearrow={stealth reversed}] (5,0)--(5.75,-1.5);
\draw [middlearrow={stealth reversed}] (6.5,0)--(5.75,-1.5);
\draw [middlearrow={stealth reversed}] (6.5,0)--(7.25,-1.5);
\draw [middlearrow={stealth reversed}] (8,0)--(7.25,-1.5);

\node [left] at (6.275,-0.6) {\tiny $a_{m+1}$};
\node [right] at (6.7,-0.6) {\tiny $a_{m}$};

\draw [fill] (5.75,-1.5) circle [radius=0.05];
\draw [fill] (7.25,-1.5) circle [radius=0.05];

\draw [middlearrow={stealth}] (5.75,-1.5)--(7.25,-1.5);

\node [below] at (6.5, -1.5) {\tiny $x_{m}$};
\end{tikzpicture}
\end{center}
\caption{}
\end{figure}
\noindent Thus, the vertex is a $1$-vertex. The arguments for other combinations are similar. 
\end{proof}

\begin{lemma}\label{two consecutive corridor with the same orientation}
Let $C_{i}$ and $C_{i+1}$ be two consecutive corridors in a stack $T$. Suppose that the arrows on the edges $u'_{i}, u''_{i}, u'_{i+1}, u''_{i+1}$ have the same orientation, as shown in Figure \ref{picture of two consecutive corridor with the same orientation}. Assume that there are no $0$-vertices on $t_{i}$. Then
\begin{enumerate}
\item[(1)] All the vertices on $t_{i+1}$ are $1$-vertices, $2$-vertices, or $3$-vertices, except possibly the two vertices at the ends of $t_{i+1}$.

\item[(2)] All the vertices on $t_{i+2}$ are either $1$-vertices or $2$-vertices, except possibly the two vertices at the ends of $t_{i+1}$.

\item[(3)] We have $|t_{i}|\leq|t_{i+1}|\leq|t_{i+2}|$.
\end{enumerate} 
\begin{figure}[H]
\begin{center}
\begin{tikzpicture}[scale=0.9]

\node [above] at (3,3) {\tiny $t_{i}$};
\draw (0,3)--(6,3);

\draw [fill] (0,3) circle [radius=0.05];
\draw [fill] (6,3) circle [radius=0.05];

\node [left] at (0,2.25) {\tiny $u'_{i}$};
\draw [middlearrow={stealth}] (0,3)--(0,1.5);
\node at (3,2.25) {\footnotesize Corridor $C_{i}$};
\draw [middlearrow={stealth}] (6,3)--(6,1.5);
\node [right] at (6,2.25) {\tiny $u''_{i}$};

\draw (0,1.5)--(6,1.5);

\node [above] at (3,1.4) {\tiny $t_{i+1}$};
\draw [fill] (0,1.5) circle [radius=0.05];
\draw [fill] (6,1.5) circle [radius=0.05];

\node [left] at (0,0.75) {\tiny $u'_{i+1}$};
\draw [middlearrow={stealth}] (0,1.5)--(0,0);
\node at (3,0.75) {\footnotesize Corridor $C_{i+1}$};
\draw [middlearrow={stealth}] (6,1.5)--(6,0);
\node [right] at (6,0.75) {\tiny $u''_{i+1}$};

\draw (0,0)--(6,0);

\node [below] at (3,0) {\tiny $t_{i+2}$};
\draw [fill] (0,0) circle [radius=0.05];
\draw [fill] (6,0) circle [radius=0.05];



\node [above] at (11,3) {\tiny $t_{i}$};
\draw (8,3)--(14,3);

\draw [fill] (8,3) circle [radius=0.05];
\draw [fill] (14,3) circle [radius=0.05];

\node [left] at (8,2.25) {\tiny $u'_{i}$};
\draw [middlearrow={stealth reversed}] (8,3)--(8,1.5);
\node at (11,2.25) {\footnotesize Corridor $C_{i}$};
\draw [middlearrow={stealth reversed}] (14,3)--(14,1.5);
\node [right] at (14,2.25) {\tiny $u''_{i}$};

\draw (8,1.5)--(14,1.5);

\node [above] at (11,1.4) {\tiny $t_{i+1}$};
\draw [fill] (8,1.5) circle [radius=0.05];
\draw [fill] (14,1.5) circle [radius=0.05];

\node [left] at (8,0.75) {\tiny $u'_{i+1}$};
\draw [middlearrow={stealth reversed}] (8,1.5)--(8,0);
\node at (11,0.75) {\footnotesize Corridor $C_{i+1}$};
\draw [middlearrow={stealth reversed}] (14,1.5)--(14,0);
\node [right] at (14,0.75) {\tiny $u''_{i+1}$};

\draw (8,0)--(14,0);

\node [below] at (11,0) {\tiny $t_{i+2}$};
\draw [fill] (8,0) circle [radius=0.05];
\draw [fill] (14,0) circle [radius=0.05];

\end{tikzpicture}
\end{center}
\caption{}
\label{picture of two consecutive corridor with the same orientation}
\end{figure}
\end{lemma}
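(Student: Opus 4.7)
The plan is to combine a length-counting argument for claim (3) with a careful case analysis on the local configurations of $x$-edges at interior vertices of $t_{i+1}$ and $t_{i+2}$ for claims (1) and (2). The first inequality of claim (3), $|t_i|\le|t_{i+1}|$, is immediate from Lemma \ref{Properties of one corridor in a stack}(2): summing the types $j_v$ of all vertices $v\in t_i$ gives $|t_{i+1}|=\sum_v j_v$, and the hypothesis that no vertex on $t_i$ is a $0$-vertex yields $j_v\ge1$ for every $v$, so $|t_{i+1}|\ge|t_i|+1$.

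For claim (1) I will look locally at each interior vertex $v\in t_{i+1}$. Such a $v$ is adjacent to two $x$-edges on $t_{i+1}$, each of which is the $x$-edge of an upward triangle of $C_i$ whose apex is a vertex $w$ of $t_i$. The orientation hypothesis together with Figure \ref{cases for oriented 2-cells} determines the label and orientation of each $x$-edge in terms of the $a$-labels at $w$, and the no-$0$-vertex hypothesis forces the $a$-labels around $w$ to form a zig-zag whose endpoints are pinned by the labels of the two $x$-edges of $t_i$ meeting at $w$. I then apply Figure \ref{cases for oriented 2-cells} again to the corridor $C_{i+1}$, whose $u$-edges share the same orientation, to read off the $a$-labels of the two outgoing edges at $v$ in $C_{i+1}$; the type of $v$ in $C_{i+1}$ equals the difference of these two $a$-indices, and the enumeration pins that difference in $\{1,2,3\}$. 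Lemma \ref{Cases of 1-vertices} will serve as a convenient lookup for the $1$-vertex subcases. For claim (2) I iterate the analysis one corridor lower: claim (1) restricts the $x$-edge pairs that can appear on $t_{i+1}$, and running the same machinery on $t_{i+2}$ narrows the difference of $a$-indices further to $\{1,2\}$, excluding the type-$3$ case.

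The second inequality of claim (3), $|t_{i+1}|\le|t_{i+2}|$, follows from Lemma \ref{Properties of one corridor in a stack}(2) applied to $C_{i+1}$: claim (1) gives $j_v\ge1$ at every interior vertex of $t_{i+1}$, and a short ancillary argument using the uniform orientation of $u'_{i+1}$ and $u''_{i+1}$ rules out the degenerate situation in which both endpoints of $t_{i+1}$ become $0$-vertices of $C_{i+1}$ simultaneously, so the sum of types is at least $|t_{i+1}|$. The main obstacle is the size of the case split in the middle paragraph: one must handle four sign patterns for the two $x$-edges at $v$, two corridor types ($\alpha$ or $\beta$), and two possible uniform $u$-orientations, and verify that each subcase produces the claimed type bound. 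I plan to treat the $\alpha$- and $\beta$-corridors in parallel by the symmetry $a_m\leftrightarrow b_m$, reducing the bookkeeping to a single table of local moves that can be checked by inspection.
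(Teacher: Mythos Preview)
Your plan is essentially the paper's own proof: the paper also establishes (1) by enumerating, for each interior vertex $v$ of $t_{i+1}$, the possible pairs of incident $x$-edges (splitting into vertices ``generated'' by an edge $x_m$ of $t_i$, shown to have type $1$ or $3$, versus those ``created'' inside the fan below a $j$-vertex of $t_i$, shown to have type $2$), and then proves (2) by running the same local analysis one level down, organized as a case split over the six unordered pairs of adjacent types on $t_{i+1}$. One caveat: your ancillary claim for the second inequality in (3)---that the uniform orientation of $u'_{i+1},u''_{i+1}$ prevents both endpoints of $t_{i+1}$ from being $0$-vertices in $C_{i+1}$---is not something the paper proves either; the paper's argument for (3) just counts vertices, and the endpoint types are instead controlled by an explicit extra hypothesis in the application (the cubic bound for stacks), so you may want to handle that point the same way rather than try to extract it from the orientation alone.
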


\begin{proof} 
We prove the lemma for the case when $C_{i},C_{i+1}$ are consecutive $\alpha$-corridors, and the arrows on $u'_{i},u''_{i},u'_{i+1},u''_{i+1}$ are pointing away from $t_{i}$. Other cases are similar. 

\begin{enumerate}
\item[(1)] We claim that every pair of adjacent vertices on $t_{i}$ generates either a $1$-vertex or $3$-vertex on $t_{i+1}$. Let two adjacent vertices on $t_{i}$ be connected by an edge $x_{m}$. These two adjacent vertices generate a vertex on $t_{i+1}$; Figure \ref{A pair of adjacent vertices on $t_{i}$ generates either a $1$-vertex or a $3$-vertex on $t_{i+1}$.} lists all possible combinations of edges that meet at the vertex on $t_{i+1}$ that is generated by a pair of adjacent vertices on $t_{i}$.
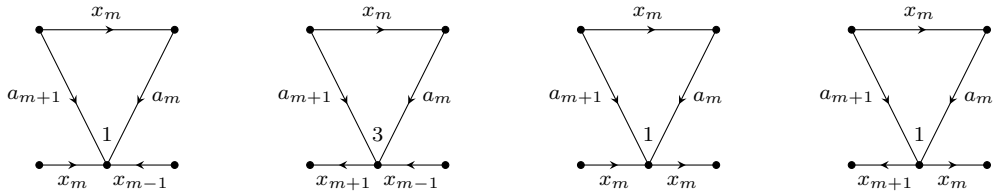
\begin{figure}[H] 
\begin{center}
\begin{tikzpicture}[scale=0.9]

\draw [middlearrow={stealth}] (0,2)--(2,2);
\draw [middlearrow={stealth}] (0,2)--(1,0);
\draw [middlearrow={stealth}] (2,2)--(1,0);
\draw [middlearrow={stealth}] (0,0)--(1,0);
\draw [middlearrow={stealth reversed}] (1,0)--(2,0);

\node [above] at (1,2) {\tiny $x_{m}$};
\node [left] at (0.5,1) {\tiny $a_{m+1}$};
\node [right] at (1.5,1) {\tiny $a_{m}$};
\node [below] at (0.5,0) {\tiny $x_{m}$};
\node [below] at (1.5,0) {\tiny $x_{m-1}$};

\node [above] at (1,0.2) {\tiny $1$};

\draw [fill] (0,2) circle [radius=0.05];
\draw [fill] (2,2) circle [radius=0.05];
\draw [fill] (0,0) circle [radius=0.05];
\draw [fill] (1,0) circle [radius=0.05];
\draw [fill] (2,0) circle [radius=0.05];

\draw [middlearrow={stealth}] (4,2)--(6,2);
\draw [middlearrow={stealth}] (4,2)--(5,0);
\draw [middlearrow={stealth}] (6,2)--(5,0);
\draw [middlearrow={stealth reversed}] (4,0)--(5,0);
\draw [middlearrow={stealth reversed}] (5,0)--(6,0);

\node [above] at (5,2) {\tiny $x_{m}$};
\node [left] at (4.5,1) {\tiny $a_{m+1}$};
\node [right] at (5.5,1) {\tiny $a_{m}$};
\node [below] at (4.5,0) {\tiny $x_{m+1}$};
\node [below] at (5.5,0) {\tiny $x_{m-1}$};

\node [above] at (5,0.2) {\tiny $3$};

\draw [fill] (4,2) circle [radius=0.05];
\draw [fill] (6,2) circle [radius=0.05];
\draw [fill] (4,0) circle [radius=0.05];
\draw [fill] (5,0) circle [radius=0.05];
\draw [fill] (6,0) circle [radius=0.05];

\draw [middlearrow={stealth}] (8,2)--(10,2);
\draw [middlearrow={stealth}] (8,2)--(9,0);
\draw [middlearrow={stealth}] (10,2)--(9,0);
\draw [middlearrow={stealth}] (8,0)--(9,0);
\draw [middlearrow={stealth}] (9,0)--(10,0);

\node [above] at (9,2) {\tiny $x_{m}$};
\node [left] at (8.5,1) {\tiny $a_{m+1}$};
\node [right] at (9.5,1) {\tiny $a_{m}$};
\node [below] at (8.5,0) {\tiny $x_{m}$};
\node [below] at (9.5,0) {\tiny $x_{m}$};

\node [above] at (9,0.2) {\tiny $1$};

\draw [fill] (8,2) circle [radius=0.05];
\draw [fill] (10,2) circle [radius=0.05];
\draw [fill] (8,0) circle [radius=0.05];
\draw [fill] (9,0) circle [radius=0.05];
\draw [fill] (10,0) circle [radius=0.05];

\draw [middlearrow={stealth}] (12,2)--(14,2);
\draw [middlearrow={stealth}] (12,2)--(13,0);
\draw [middlearrow={stealth}] (14,2)--(13,0);
\draw [middlearrow={stealth reversed}] (12,0)--(13,0);
\draw [middlearrow={stealth}] (13,0)--(14,0);

\node [above] at (13,2) {\tiny $x_{m}$};
\node [left] at (12.5,1) {\tiny $a_{m+1}$};
\node [right] at (13.5,1) {\tiny $a_{m}$};
\node [below] at (12.5,0) {\tiny $x_{m+1}$};
\node [below] at (13.5,0) {\tiny $x_{m}$};

\node [above] at (13,0.2) {\tiny $1$};

\draw [fill] (12,2) circle [radius=0.05];
\draw [fill] (14,2) circle [radius=0.05];
\draw [fill] (12,0) circle [radius=0.05];
\draw [fill] (13,0) circle [radius=0.05];
\draw [fill] (14,0) circle [radius=0.05];

\end{tikzpicture}
\end{center}
\caption{A pair of adjacent vertices on $t_{i}$ generates either a $1$-vertex or $3$-vertex on $t_{i+1}$.}
\label{A pair of adjacent vertices on $t_{i}$ generates either a $1$-vertex or a $3$-vertex on $t_{i+1}$.}
\end{figure}

\noindent The $1$-vertices in Figure \ref{A pair of adjacent vertices on $t_{i}$ generates either a $1$-vertex or a $3$-vertex on $t_{i+1}$.} are recognized by Lemma \ref{Cases of 1-vertices}. The following picture shows that the vertex in Figure \ref{A pair of adjacent vertices on $t_{i}$ generates either a $1$-vertex or a $3$-vertex on $t_{i+1}$.} is a $3$-vertex:
\begin{figure}[H]
\begin{center}
\begin{tikzpicture}
\draw [middlearrow={stealth reversed}] (0,0)--(4.5,0);
\draw [middlearrow={stealth reversed}] (4.5,0)--(9,0);

\node [above] at (2.25,0) {\tiny $x_{m+1}$};
\node [above] at (4.5,0) {\tiny $3$};
\node [above] at (6.75,0) {\tiny $x_{m-1}$};

\draw [fill] (0,0) circle [radius=0.05];
\draw [fill] (4.5,0) circle [radius=0.05];
\draw [fill] (9,0) circle [radius=0.05];


\draw [middlearrow={stealth}] (0,0)--(2.25,-2);
\draw [middlearrow={stealth}] (4.5,0)--(2.25,-2);

\draw [middlearrow={stealth}] (4.5,0)--(3.75,-2);
\draw [middlearrow={stealth}] (4.5,0)--(5.25,-2);
\draw [middlearrow={stealth}] (4.5,0)--(6.75,-2);

\draw [middlearrow={stealth}] (4.5,0)--(6.75,-2);
\draw [middlearrow={stealth}] (9,0)--(6.75,-2);

\node [left] at (1.125,-1) {\tiny $a_{m+1}$};
\node [left] at (3.275,-1) {\tiny $a_{m+2}$};
\node [left] at (4.125,-1.3) {\tiny $a_{m+1}$};
\node [right] at (4.875,-1.3) {\tiny $a_{m}$};
\node [right] at (5.625,-1) {\tiny $a_{m-1}$};
\node [right] at (7.875,-1) {\tiny $a_{m}$};


\draw [middlearrow={stealth reversed}] (2.25,-2)--(3.75,-2);
\draw [middlearrow={stealth reversed}] (3.75,-2)--(5.25,-2);
\draw [middlearrow={stealth reversed}] (5.25,-2)--(6.75,-2);

\node [below] at (3,-2) {\tiny $x_{m+1}$};
\node [below] at (4.5,-2) {\tiny $x_{m}$};
\node [below] at (6,-2) {\tiny $x_{m-1}$};

\draw [fill] (2.25,-2) circle [radius=0.05];
\draw [fill] (3.75,-2) circle [radius=0.05];
\draw [fill] (5.25,-2) circle [radius=0.05];
\draw [fill] (6.75,-2) circle [radius=0.05];
\end{tikzpicture}
\end{center}
\caption{}
\end{figure}

\noindent There are four more cases obtained by flipping the pictures in Figure \ref{A pair of adjacent vertices on $t_{i}$ generates either a $1$-vertex or a $3$-vertex on $t_{i+1}$.} such that the arrow on the edge $x_{m}$ of $t_{i}$ has the opposite orientation. This proves the claim.

Next, we show that the vertices on $t_{i+1}$ that are not generated by pairs of adjacent vertices on $t_{i}$ are $2$-vertices, except the two vertices at the end of $t_{i+1}$. We claim that each $j$-vertex on $t_{i}$ creates $j-1$ vertices on $t_{i+1}$ that are all $2$-vertices. For any $j$-vertex on $t_{i}$ (respectively $t_{i+1}$), by definition, there are $j+1$ edges connecting the $j$-vertex to $j+1$ consecutive distinct vertices on $t_{i+1}$ (respectively $t_{i}$). That is, there are $j$ distinct triangles based on $j$ consecutive edges on $t_{i+1}$ (respectively $t_{i}$) that has the $j$-vertex as the common vertex; see Figure \ref{Each $j$-vertex on $t_{i}$ create $j-1$ vertices on $t_{i+1}$ that are all $2$-vertices.}. 

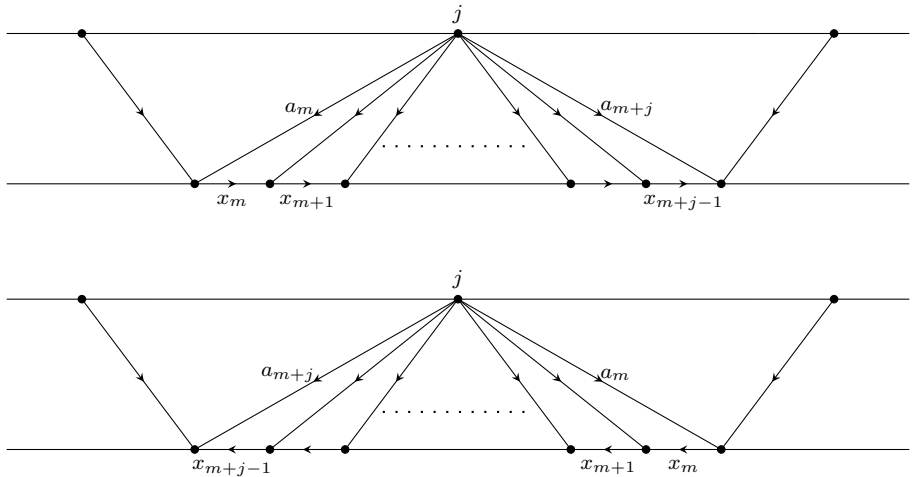
\begin{figure}[H]
\begin{center}
\begin{tikzpicture}[scale=0.5]

\draw (-12,4)--(-8,4);
\draw (-8,4)--(0,4)--(8,4);
\draw (8,4)--(12,4);

\draw [middlearrow={stealth}] (-10,4)--(-7,0);
\draw [middlearrow={stealth}] (0,4)--(-7,0);
\draw [middlearrow={stealth}] (0,4)--(-5,0);
\draw [middlearrow={stealth}] (0,4)--(-3,0);
\draw [thick, loosely dotted] (-2,1)--(2,1);
\draw [middlearrow={stealth}] (0,4)--(3,0);
\draw [middlearrow={stealth}] (0,4)--(5,0);
\draw [middlearrow={stealth}] (0,4)--(7,0);
\draw [middlearrow={stealth}] (10,4)--(7,0);

\node [left] at (-3.5,2) {\tiny $a_{m}$};
\node [right] at (3.5,2) {\tiny $a_{m+j}$};

\draw [fill] (-10,4) circle [radius=0.1];
\draw [fill] (0,4) circle [radius=0.1];
\draw [fill] (10,4) circle [radius=0.1];


\draw (-12,0)--(-7,0);
\draw [middlearrow={stealth}] (-7,0)--(-5,0);
\draw [middlearrow={stealth}] (-5,0)--(-3,0);
\draw (-3,0)--(3,0);
\draw [middlearrow={stealth}] (3,0)--(5,0);
\draw [middlearrow={stealth}] (5,0)--(7,0);
\draw (7,0)--(12,0);

\node [above] at (0,4) {\tiny $j$};
\node [below] at (-6,0) {\tiny $x_{m}$};
\node [below] at (-4,0) {\tiny $x_{m+1}$};
\node [below] at (6,0) {\tiny $x_{m+j-1}$};

\draw [fill] (-7,0) circle [radius=0.1];
\draw [fill] (-5,0) circle [radius=0.1];
\draw [fill] (-3,0) circle [radius=0.1];
\draw [fill] (3,0) circle [radius=0.1];
\draw [fill] (5,0) circle [radius=0.1];
\draw [fill] (7,0) circle [radius=0.1];

\end{tikzpicture}


\


\begin{tikzpicture}[scale=0.5]

\draw (-12,4)--(-8,4);
\draw (-8,4)--(0,4)--(8,4);
\draw (8,4)--(12,4);

\draw [middlearrow={stealth}] (-10,4)--(-7,0);
\draw [middlearrow={stealth}] (0,4)--(-7,0);
\draw [middlearrow={stealth}] (0,4)--(-5,0);
\draw [middlearrow={stealth}] (0,4)--(-3,0);
\draw [thick, loosely dotted] (-2,1)--(2,1);
\draw [middlearrow={stealth}] (0,4)--(3,0);
\draw [middlearrow={stealth}] (0,4)--(5,0);
\draw [middlearrow={stealth}] (0,4)--(7,0);
\draw [middlearrow={stealth}] (10,4)--(7,0);

\node [left] at (-3.5,2) {\tiny $a_{m+j}$};
\node [right] at (3.5,2) {\tiny $a_{m}$};

\draw [fill] (-10,4) circle [radius=0.1];
\draw [fill] (0,4) circle [radius=0.1];
\draw [fill] (10,4) circle [radius=0.1];


\draw (-12,0)--(-7,0);
\draw [middlearrow={stealth reversed}] (-7,0)--(-5,0);
\draw [middlearrow={stealth reversed}] (-5,0)--(-3,0);
\draw (-3,0)--(3,0);
\draw [middlearrow={stealth reversed}] (3,0)--(5,0);
\draw [middlearrow={stealth reversed}] (5,0)--(7,0);
\draw (7,0)--(12,0);

\node [above] at (0,4) {\tiny $j$};
\node [below] at (-6,0) {\tiny $x_{m+j-1}$};
\node [below] at (4,0) {\tiny $x_{m+1}$};
\node [below] at (6,0) {\tiny $x_{m}$};

\draw [fill] (-7,0) circle [radius=0.1];
\draw [fill] (-5,0) circle [radius=0.1];
\draw [fill] (-3,0) circle [radius=0.1];
\draw [fill] (3,0) circle [radius=0.1];
\draw [fill] (5,0) circle [radius=0.1];
\draw [fill] (7,0) circle [radius=0.1];

\end{tikzpicture}
\end{center}
\caption{Each $j$-vertex on $t_{i}$ (respectively $t_{i+1}$) creates $j-1$ vertices on $t_{i+1}$ respectively $t_{i}$) that are all $2$-vertices.}
\label{Each $j$-vertex on $t_{i}$ create $j-1$ vertices on $t_{i+1}$ that are all $2$-vertices.}
\end{figure} 
\noindent Let $x_{r-1},x_{r}$ be two consecutive edges, $r\in\lbrace m,\cdots,m+j-1\rbrace$, and let the vertex $v_{r}$ be the common vertex of $x_{r-1}$ and $x_{r}$. Figure \ref{The vertex $v_{r}$ on $t_{i+1}$ is a $2$-vertex.} shows that $v_{r}$ is a $2$-vertex. This proves the claim.
\begin{figure}[H]
\begin{center}
\begin{tikzpicture}

\draw [middlearrow={stealth}] (0,0)--(2,0);
\draw [middlearrow={stealth}] (2,0)--(4,0);

\node [above] at (1,0) {\tiny $x_{r-1}$};
\node [above] at (2,0) {\tiny $v_{r}$};
\node [above] at (3,0) {\tiny $x_{r}$};

\draw [fill] (0,0) circle [radius=0.05];
\draw [fill] (2,0) circle [radius=0.05];
\draw [fill] (4,0) circle [radius=0.05];

\draw [middlearrow={stealth}] (0,0)--(1,-2);
\draw [middlearrow={stealth}] (2,0)--(1,-2);
\draw [middlearrow={stealth}] (2,0)--(2,-2);
\draw [middlearrow={stealth}] (2,0)--(3,-2);
\draw [middlearrow={stealth}] (4,0)--(3,-2);

\node [left] at (1.5,-1) {\tiny $a_{r-1}$};
\node [right] at (2.5,-1) {\tiny $a_{r+2}$};

\draw [fill] (1,-2) circle [radius=0.05];
\draw [fill] (2,-2) circle [radius=0.05];
\draw [fill] (3,-2) circle [radius=0.05];


\draw [middlearrow={stealth}] (1,-2)--(2,-2);
\draw [middlearrow={stealth}] (2,-2)--(3,-2);

\node [below] at (1.5,-2) {\tiny $x_{r-1}$};
\node [below] at (2.5,-2) {\tiny $x_{r}$};



\draw [middlearrow={stealth reversed}] (6,0)--(8,0);
\draw [middlearrow={stealth reversed}] (8,0)--(10,0);

\node [above] at (7,0) {\tiny $x_{r}$};
\node [above] at (8,0) {\tiny $v_{r}$};
\node [above] at (9,0) {\tiny $x_{r-1}$};

\draw [fill] (6,0) circle [radius=0.05];
\draw [fill] (8,0) circle [radius=0.05];
\draw [fill] (10,0) circle [radius=0.05];

\draw [middlearrow={stealth}] (6,0)--(7,-2);
\draw [middlearrow={stealth}] (8,0)--(7,-2);
\draw [middlearrow={stealth}] (8,0)--(8,-2);
\draw [middlearrow={stealth}] (8,0)--(9,-2);
\draw [middlearrow={stealth}] (10,0)--(9,-2);

\node [left] at (7.5,-1) {\tiny $a_{r+1}$};
\node [right] at (8.5,-1) {\tiny $a_{r-1}$};

\draw [fill] (7,-2) circle [radius=0.05];
\draw [fill] (8,-2) circle [radius=0.05];
\draw [fill] (9,-2) circle [radius=0.05];


\draw [middlearrow={stealth reversed}] (7,-2)--(8,-2);
\draw [middlearrow={stealth reversed}] (8,-2)--(9,-2);

\node [below] at (7.5,-2) {\tiny $x_{r}$};
\node [below] at (8.5,-2) {\tiny $x_{r-1}$};

\end{tikzpicture}
\end{center}
\caption{The vertex $v_{r}$ on $t_{i+1}$ is a $2$-vertex.}
\label{The vertex $v_{r}$ on $t_{i+1}$ is a $2$-vertex.}
\end{figure}
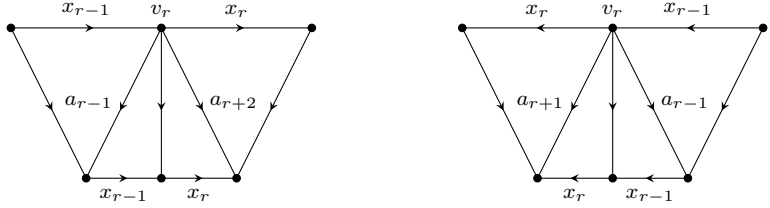

\item[(2)] We have shown that all the vertices on $t_{i+1}$ are $1$-vertices, $2$-vertices, or $3$-vertices. Each of the vertices on $t_{i+1}$ creates numbers of $0$, $1$, or $2$ vertices on $t_{i+2}$, respectively, and they are all $2$-vertices. Other vertices on $t_{i+2}$ are generated by pairs of adjacent vertices on $t_{i+1}$. We now show that these vertices on $t_{i+2}$ are $1$-vertices. We prove the claim by showing all the possible combinations of types of vertices on $t_{i+1}$. We have six cases: pairs of adjacent $1$-vertices, adjacent $2$-vertices, and adjacent $3$-vertices; pairs of adjacent $1$-vertex and $2$-vertex; pairs of adjacent $1$-vertex and $3$-vertex; and pairs of adjacent $2$-vertex and $3$-vertex. In the following pictures, all the $1$-vertices are recognized by Lemma \ref{Cases of 1-vertices}.

Figure \ref{Two adjacent $1$-vertices on $t_{i+1}$ generate a $1$-vertex on $t_{i+2}$.} shows that a pair of adjacent $1$-vertices on $t_{i+1}$ generates a $1$-vertex on $t_{i+2}$:
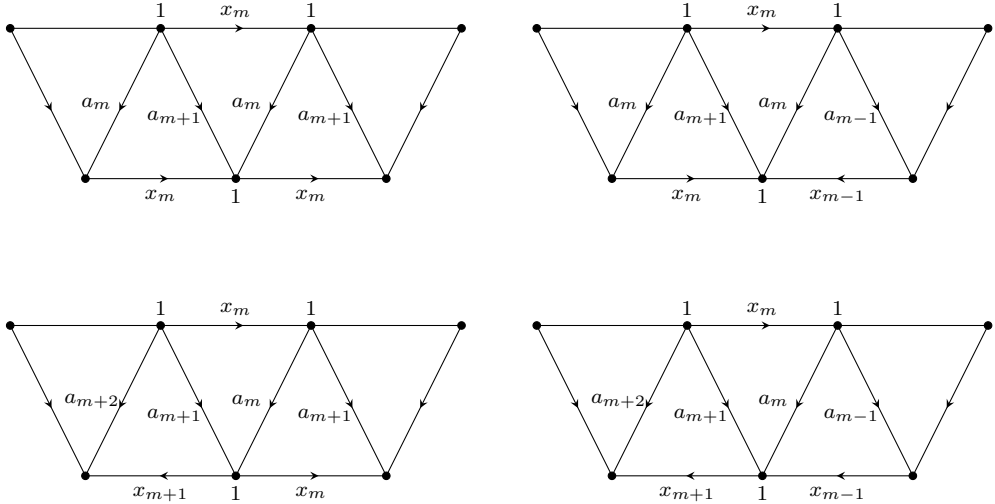
\begin{figure}[H]
\begin{center}
\begin{tikzpicture}

\draw (0,0)--(2,0);
\draw [middlearrow={stealth}] (2,0)--(4,0);
\draw (4,0)--(6,0);

\node [above] at (2,0) {\tiny $1$};
\node [above] at (3,0) {\tiny $x_{m}$};
\node [above] at (4,0) {\tiny $1$};
\draw [fill] (0,0) circle [radius=0.05];
\draw [fill] (2,0) circle [radius=0.05];
\draw [fill] (4,0) circle [radius=0.05];
\draw [fill] (6,0) circle [radius=0.05];

\draw [middlearrow={stealth}] (0,0)--(1,-2);
\draw [middlearrow={stealth}] (2,0)--(1,-2);
\draw [middlearrow={stealth}] (2,0)--(3,-2);
\draw [middlearrow={stealth}] (4,0)--(3,-2);
\draw [middlearrow={stealth}] (4,0)--(5,-2);
\draw [middlearrow={stealth}] (6,0)--(5,-2);

\node [left] at (1.5,-1) {\tiny $a_{m}$};
\node [left] at (2.7,-1.2) {\tiny $a_{m+1}$};
\node [left] at (3.5,-1) {\tiny $a_{m}$};
\node [left] at (4.7,-1.2) {\tiny $a_{m+1}$};

\draw [middlearrow={stealth}] (1,-2)--(3,-2);
\draw [middlearrow={stealth}] (3,-2)--(5,-2);

\node [below] at (2,-2) {\tiny $x_{m}$};
\node [below] at (3,-2) {\tiny $1$};
\node [below] at (4,-2) {\tiny $x_{m}$};
\draw [fill] (1,-2) circle [radius=0.05];
\draw [fill] (3,-2) circle [radius=0.05];
\draw [fill] (5,-2) circle [radius=0.05];


\draw (7,0)--(9,0);
\draw [middlearrow={stealth}] (9,0)--(11,0);
\draw (11,0)--(13,0);

\node [above] at (9,0) {\tiny $1$};
\node [above] at (10,0) {\tiny $x_{m}$};
\node [above] at (11,0) {\tiny $1$};
\draw [fill] (7,0) circle [radius=0.05];
\draw [fill] (9,0) circle [radius=0.05];
\draw [fill] (11,0) circle [radius=0.05];
\draw [fill] (13,0) circle [radius=0.05];

\draw [middlearrow={stealth}] (7,0)--(8,-2);
\draw [middlearrow={stealth}] (9,0)--(8,-2);
\draw [middlearrow={stealth}] (9,0)--(10,-2);
\draw [middlearrow={stealth}] (11,0)--(10,-2);
\draw [middlearrow={stealth}] (11,0)--(12,-2);
\draw [middlearrow={stealth}] (13,0)--(12,-2);

\node [left] at (8.5,-1) {\tiny $a_{m}$};
\node [left] at (9.7,-1.2) {\tiny $a_{m+1}$};
\node [left] at (10.5,-1) {\tiny $a_{m}$};
\node [left] at (11.7,-1.2) {\tiny $a_{m-1}$};

\draw [middlearrow={stealth}] (8,-2)--(10,-2);
\draw [middlearrow={stealth reversed}] (10,-2)--(12,-2);

\node [below] at (9,-2) {\tiny $x_{m}$};
\node [below] at (10,-2) {\tiny $1$};
\node [below] at (11,-2) {\tiny $x_{m-1}$};
\draw [fill] (8,-2) circle [radius=0.05];
\draw [fill] (10,-2) circle [radius=0.05];
\draw [fill] (12,-2) circle [radius=0.05];
\end{tikzpicture}

\

\

\begin{tikzpicture}

\draw (0,0)--(2,0);
\draw [middlearrow={stealth}] (2,0)--(4,0);
\draw (4,0)--(6,0);

\node [above] at (2,0) {\tiny $1$};
\node [above] at (3,0) {\tiny $x_{m}$};
\node [above] at (4,0) {\tiny $1$};
\draw [fill] (0,0) circle [radius=0.05];
\draw [fill] (2,0) circle [radius=0.05];
\draw [fill] (4,0) circle [radius=0.05];
\draw [fill] (6,0) circle [radius=0.05];

\draw [middlearrow={stealth}] (0,0)--(1,-2);
\draw [middlearrow={stealth}] (2,0)--(1,-2);
\draw [middlearrow={stealth}] (2,0)--(3,-2);
\draw [middlearrow={stealth}] (4,0)--(3,-2);
\draw [middlearrow={stealth}] (4,0)--(5,-2);
\draw [middlearrow={stealth}] (6,0)--(5,-2);

\node [left] at (1.6,-1) {\tiny $a_{m+2}$};
\node [left] at (2.7,-1.2) {\tiny $a_{m+1}$};
\node [left] at (3.5,-1) {\tiny $a_{m}$};
\node [left] at (4.7,-1.2) {\tiny $a_{m+1}$};

\draw [middlearrow={stealth reversed}] (1,-2)--(3,-2);
\draw [middlearrow={stealth}] (3,-2)--(5,-2);

\node [below] at (2,-2) {\tiny $x_{m+1}$};
\node [below] at (3,-2) {\tiny $1$};
\node [below] at (4,-2) {\tiny $x_{m}$};
\draw [fill] (1,-2) circle [radius=0.05];
\draw [fill] (3,-2) circle [radius=0.05];
\draw [fill] (5,-2) circle [radius=0.05];


\draw (7,0)--(9,0);
\draw [middlearrow={stealth}] (9,0)--(11,0);
\draw (11,0)--(13,0);

\node [above] at (9,0) {\tiny $1$};
\node [above] at (10,0) {\tiny $x_{m}$};
\node [above] at (11,0) {\tiny $1$};
\draw [fill] (7,0) circle [radius=0.05];
\draw [fill] (9,0) circle [radius=0.05];
\draw [fill] (11,0) circle [radius=0.05];
\draw [fill] (13,0) circle [radius=0.05];

\draw [middlearrow={stealth}] (7,0)--(8,-2);
\draw [middlearrow={stealth}] (9,0)--(8,-2);
\draw [middlearrow={stealth}] (9,0)--(10,-2);
\draw [middlearrow={stealth}] (11,0)--(10,-2);
\draw [middlearrow={stealth}] (11,0)--(12,-2);
\draw [middlearrow={stealth}] (13,0)--(12,-2);

\node [left] at (8.6,-1) {\tiny $a_{m+2}$};
\node [left] at (9.7,-1.2) {\tiny $a_{m+1}$};
\node [left] at (10.5,-1) {\tiny $a_{m}$};
\node [left] at (11.7,-1.2) {\tiny $a_{m-1}$};

\draw [middlearrow={stealth reversed}] (8,-2)--(10,-2);
\draw [middlearrow={stealth reversed}] (10,-2)--(12,-2);

\node [below] at (9,-2) {\tiny $x_{m+1}$};
\node [below] at (10,-2) {\tiny $1$};
\node [below] at (11,-2) {\tiny $x_{m-1}$};
\draw [fill] (8,-2) circle [radius=0.05];
\draw [fill] (10,-2) circle [radius=0.05];
\draw [fill] (12,-2) circle [radius=0.05];
\end{tikzpicture}
\end{center}
\caption{Two adjacent $1$-vertices on $t_{i+1}$ generate a $1$-vertex on $t_{i+2}$.}
\label{Two adjacent $1$-vertices on $t_{i+1}$ generate a $1$-vertex on $t_{i+2}$.}
\end{figure}

Next, Figure \ref{A pair of adjacent $1$-vertex and $2$-vertex on $t_{i+1}$ generates a $1$-vertex on $t_{i+2}$.} shows that a pair of adjacent $1$-vertex and $2$-vertex on $t_{i+1}$ generates a $1$-vertex on $t_{i+2}$. Recall that Figure \ref{A pair of adjacent vertices on $t_{i}$ generates either a $1$-vertex or a $3$-vertex on $t_{i+1}$.} and Figure \ref{The vertex $v_{r}$ on $t_{i+1}$ is a $2$-vertex.} are the only situations of $1$-vertex and $2$-vertex on $t_{i+1}$.
\begin{figure}[H]
\begin{center}
\begin{tikzpicture}

\draw [middlearrow={stealth}] (0,0)--(2,0);
\draw [middlearrow={stealth}] (2,0)--(4,0);
\draw [middlearrow={stealth reversed}] (4,0)--(6,0);

\node [above] at (1,0) {\tiny $x_{m-1}$};
\node [above] at (2,0) {\tiny $2$};
\node [above] at (3,0) {\tiny $x_{m}$};
\node [above] at (4,0) {\tiny $1$};
\node [above] at (5,0) {\tiny $x_{m-1}$};
\draw [fill] (0,0) circle [radius=0.05];
\draw [fill] (2,0) circle [radius=0.05];
\draw [fill] (4,0) circle [radius=0.05];
\draw [fill] (6,0) circle [radius=0.05];

\draw [middlearrow={stealth}] (0,0)--(1,-2);
\draw [middlearrow={stealth}] (2,0)--(1,-2);
\draw [middlearrow={stealth}] (2,0)--(2,-2);
\draw [middlearrow={stealth}] (2,0)--(3,-2);
\draw [middlearrow={stealth}] (4,0)--(3,-2);
\draw [middlearrow={stealth}] (4,0)--(5,-2);
\draw [middlearrow={stealth}] (6,0)--(5,-2);

\node [left] at (1.7,-0.65) {\tiny $a_{m-1}$};
\node [right] at (2.3,-0.65) {\tiny $a_{m+1}$};
\node [left] at (3.5,-1) {\tiny $a_{m}$};
\node [right] at (4.3,-0.65) {\tiny $a_{m-1}$};

\draw [middlearrow={stealth}] (1,-2)--(2,-2);
\draw [middlearrow={stealth}] (2,-2)--(3,-2);
\draw [middlearrow={stealth reversed}] (3,-2)--(5,-2);

\node [below] at (1.5,-2) {\tiny $x_{m-1}$};
\node [below] at (2.5,-2) {\tiny $x_{m}$};
\node [below] at (3,-2) {\tiny $1$};
\node [below] at (4,-2) {\tiny $x_{m-1}$};
\draw [fill] (1,-2) circle [radius=0.05];
\draw [fill] (2,-2) circle [radius=0.05];
\draw [fill] (3,-2) circle [radius=0.05];
\draw [fill] (5,-2) circle [radius=0.05];


\draw [middlearrow={stealth reversed}] (7,0)--(9,0);
\draw [middlearrow={stealth reversed}] (9,0)--(11,0);
\draw [middlearrow={stealth}](11,0)--(13,0);

\node [above] at (8,0) {\tiny $x_{m}$};
\node [above] at (9,0) {\tiny $2$};
\node [above] at (10,0) {\tiny $x_{m-1}$};
\node [above] at (11,0) {\tiny $1$};
\node [above] at (12,0) {\tiny $x_{m}$};
\draw [fill] (7,0) circle [radius=0.05];
\draw [fill] (9,0) circle [radius=0.05];
\draw [fill] (11,0) circle [radius=0.05];
\draw [fill] (13,0) circle [radius=0.05];

\draw [middlearrow={stealth}] (7,0)--(8,-2);
\draw [middlearrow={stealth}] (9,0)--(8,-2);
\draw [middlearrow={stealth}] (9,0)--(9,-2);
\draw [middlearrow={stealth}] (9,0)--(10,-2);
\draw [middlearrow={stealth}] (11,0)--(10,-2);
\draw [middlearrow={stealth}] (11,0)--(12,-2);
\draw [middlearrow={stealth}] (13,0)--(12,-2);

\node [left] at (8.7,-0.65) {\tiny $a_{m+1}$};
\node [right] at (9.3,-0.65) {\tiny $a_{m-1}$};
\node [left] at (10.5,-1) {\tiny $a_{m}$};
\node [right] at (11.3,-0.65) {\tiny $a_{m+1}$};

\draw [middlearrow={stealth reversed}] (8,-2)--(9,-2);
\draw [middlearrow={stealth reversed}] (9,-2)--(10,-2);
\draw [middlearrow={stealth reversed}] (10,-2)--(12,-2);

\node [below] at (8.5,-2) {\tiny $x_{m}$};
\node [below] at (9.5,-2) {\tiny $x_{m-1}$};
\node [below] at (10,-2) {\tiny $1$};
\node [below] at (11,-2) {\tiny $x_{m}$};
\draw [fill] (8,-2) circle [radius=0.05];
\draw [fill] (9,-2) circle [radius=0.05];
\draw [fill] (10,-2) circle [radius=0.05];
\draw [fill] (12,-2) circle [radius=0.05];
\end{tikzpicture}
\end{center}
\caption{A pair of adjacent $1$-vertex and $2$-vertex on $t_{i+1}$ generates a $1$-vertex on $t_{i+2}$.}
\label{A pair of adjacent $1$-vertex and $2$-vertex on $t_{i+1}$ generates a $1$-vertex on $t_{i+2}$.}
\end{figure} 

For a pair of adjacent $2$-vertices on $t_{i+1}$, we have one combination up to orientation:
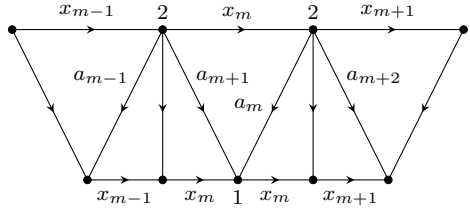
\begin{figure}[H]
\begin{center}
\begin{tikzpicture}

\draw [middlearrow={stealth}] (0,0)--(2,0);
\draw [middlearrow={stealth}] (2,0)--(4,0);
\draw [middlearrow={stealth}] (4,0)--(6,0);

\node [above] at (1,0) {\tiny $x_{m-1}$};
\node [above] at (2,0) {\tiny $2$};
\node [above] at (3,0) {\tiny $x_{m}$};
\node [above] at (4,0) {\tiny $2$};
\node [above] at (5,0) {\tiny $x_{m+1}$};
\draw [fill] (0,0) circle [radius=0.05];
\draw [fill] (2,0) circle [radius=0.05];
\draw [fill] (4,0) circle [radius=0.05];
\draw [fill] (6,0) circle [radius=0.05];

\draw [middlearrow={stealth}] (0,0)--(1,-2);
\draw [middlearrow={stealth}] (2,0)--(1,-2);
\draw [middlearrow={stealth}] (2,0)--(2,-2);
\draw [middlearrow={stealth}] (2,0)--(3,-2);
\draw [middlearrow={stealth}] (4,0)--(3,-2);
\draw [middlearrow={stealth}] (4,0)--(4,-2);
\draw [middlearrow={stealth}] (4,0)--(5,-2);
\draw [middlearrow={stealth}] (6,0)--(5,-2);

\node [left] at (1.7,-0.65) {\tiny $a_{m-1}$};
\node [right] at (2.3,-0.65) {\tiny $a_{m+1}$};
\node [left] at (3.5,-1) {\tiny $a_{m}$};
\node [right] at (4.3,-0.65) {\tiny $a_{m+2}$};

\draw [middlearrow={stealth}] (1,-2)--(2,-2);
\draw [middlearrow={stealth}] (2,-2)--(3,-2);
\draw [middlearrow={stealth}] (3,-2)--(4,-2);
\draw [middlearrow={stealth}] (4,-2)--(5,-2);

\node [below] at (1.5,-2) {\tiny $x_{m-1}$};
\node [below] at (2.5,-2) {\tiny $x_{m}$};
\node [below] at (3,-2) {\tiny $1$};
\node [below] at (3.5,-2) {\tiny $x_{m}$};
\node [below] at (4.5,-2) {\tiny $x_{m+1}$};
\draw [fill] (1,-2) circle [radius=0.05];
\draw [fill] (2,-2) circle [radius=0.05];
\draw [fill] (3,-2) circle [radius=0.05];
\draw [fill] (4,-2) circle [radius=0.05];
\draw [fill] (5,-2) circle [radius=0.05];
\end{tikzpicture}
\end{center}
\caption{A pair of adjacent $2$-vertices on $t_{i+1}$ generates a $1$-vertex on $t_{i+2}$.}
\end{figure}

For a pair of adjacent $1$-vertex and $3$-vertex on $t_{i+1}$, we have:
\begin{figure}[H] 
\begin{center}
\begin{tikzpicture}[scale=0.9]

\draw [middlearrow={stealth reversed}] (0,2)--(2,2);
\draw [middlearrow={stealth}] (0,2)--(1,0);
\draw [middlearrow={stealth}] (2,2)--(1,0);
\draw [middlearrow={stealth reversed}] (0,0)--(1,0);
\draw [middlearrow={stealth}] (1,0)--(2,0);

\node [above] at (0,2) {\tiny $3$};
\node [above] at (2,2) {\tiny $1$};
\node [above] at (1,2) {\tiny $x_{m}$};
\node [left] at (0.5,1) {\tiny $a_{m}$};
\node [right] at (1.5,1) {\tiny $a_{m+1}$};
\node [below] at (0.5,0) {\tiny $x_{m}$};
\node [below] at (1.5,0) {\tiny $x_{m+1}$};

\node [above] at (1,0.2) {\tiny $1$};

\draw [fill] (0,2) circle [radius=0.05];
\draw [fill] (2,2) circle [radius=0.05];
\draw [fill] (0,0) circle [radius=0.05];
\draw [fill] (1,0) circle [radius=0.05];
\draw [fill] (2,0) circle [radius=0.05];

\draw [middlearrow={stealth reversed}] (4,2)--(6,2);
\draw [middlearrow={stealth}] (4,2)--(5,0);
\draw [middlearrow={stealth}] (6,2)--(5,0);
\draw [middlearrow={stealth reversed}] (4,0)--(5,0);
\draw [middlearrow={stealth reversed}] (5,0)--(6,0);

\node [above] at (4,2) {\tiny $3$};
\node [above] at (6,2) {\tiny $1$};
\node [above] at (5,2) {\tiny $x_{m}$};
\node [left] at (4.5,1) {\tiny $a_{m}$};
\node [right] at (5.5,1) {\tiny $a_{m+1}$};
\node [below] at (4.5,0) {\tiny $x_{m}$};
\node [below] at (5.5,0) {\tiny $x_{m}$};

\node [above] at (5,0.2) {\tiny $1$};

\draw [fill] (4,2) circle [radius=0.05];
\draw [fill] (6,2) circle [radius=0.05];
\draw [fill] (4,0) circle [radius=0.05];
\draw [fill] (5,0) circle [radius=0.05];
\draw [fill] (6,0) circle [radius=0.05];


\node [above] at (8,2) {\tiny $1$};
\node [above] at (10,2) {\tiny $3$};
\draw [middlearrow={stealth reversed}] (8,2)--(10,2);
\draw [middlearrow={stealth}] (8,2)--(9,0);
\draw [middlearrow={stealth}] (10,2)--(9,0);
\draw [middlearrow={stealth reversed}] (8,0)--(9,0);
\draw [middlearrow={stealth reversed}] (9,0)--(10,0);

\node [above] at (9,2) {\tiny $x_{m}$};
\node [left] at (8.5,1) {\tiny $a_{m}$};
\node [right] at (9.5,1) {\tiny $a_{m+1}$};
\node [below] at (8.5,0) {\tiny $x_{m}$};
\node [below] at (9.5,0) {\tiny $x_{m}$};

\node [above] at (9,0.2) {\tiny $1$};

\draw [fill] (8,2) circle [radius=0.05];
\draw [fill] (10,2) circle [radius=0.05];
\draw [fill] (8,0) circle [radius=0.05];
\draw [fill] (9,0) circle [radius=0.05];
\draw [fill] (10,0) circle [radius=0.05];


\node [above] at (12,2) {\tiny $1$};
\node [above] at (14,2) {\tiny $3$};
\draw [middlearrow={stealth reversed}] (12,2)--(14,2);
\draw [middlearrow={stealth}] (12,2)--(13,0);
\draw [middlearrow={stealth}] (14,2)--(13,0);
\draw [middlearrow={stealth}] (12,0)--(13,0);
\draw [middlearrow={stealth reversed}] (13,0)--(14,0);

\node [above] at (13,2) {\tiny $x_{m}$};
\node [left] at (12.5,1) {\tiny $a_{m}$};
\node [right] at (13.5,1) {\tiny $a_{m+1}$};
\node [below] at (12.5,0) {\tiny $x_{m-1}$};
\node [below] at (13.5,0) {\tiny $x_{m}$};

\node [above] at (13,0.2) {\tiny $1$};

\draw [fill] (12,2) circle [radius=0.05];
\draw [fill] (14,2) circle [radius=0.05];
\draw [fill] (12,0) circle [radius=0.05];
\draw [fill] (13,0) circle [radius=0.05];
\draw [fill] (14,0) circle [radius=0.05];

\end{tikzpicture}
\end{center}
\caption{A pair of adjacent $1$-vertex and $3$-vertex on $t_{i+1}$ generates a $1$-vertex on $t_{i+2}$.}
\label{A pair of adjacent $1$-vertex and $3$-vertex on $t_{i+1}$ generates a $1$-vertex on $t_{i+2}$.}
\end{figure}
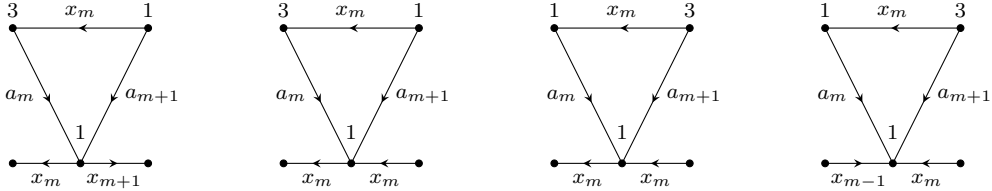

For a pair of adjacent $2$-vertex and $3$-vertex on $t_{i+1}$, since the number of combinations of edges on $t_{i+1}$ that will create $2$-vertices and $3$-vertices are limited, we have fewer cases here:
\begin{figure}[H]
\begin{center}
\begin{tikzpicture}[scale=0.9]

\draw [middlearrow={stealth}] (0,2)--(2,2);
\draw [middlearrow={stealth}] (0,2)--(1,0);
\draw [middlearrow={stealth}] (2,2)--(1,0);
\draw [middlearrow={stealth}] (0,0)--(1,0);
\draw [middlearrow={stealth}] (1,0)--(2,0);

\node [above] at (0,2) {\tiny $2$};
\node [above] at (2,2) {\tiny $3$};
\node [above] at (1,2) {\tiny $x_{m}$};
\node [left] at (0.5,1) {\tiny $a_{m+1}$};
\node [right] at (1.5,1) {\tiny $a_{m}$};
\node [below] at (0.5,0) {\tiny $x_{m}$};
\node [below] at (1.5,0) {\tiny $x_{m}$};

\node [above] at (1,0.2) {\tiny $1$};

\draw [fill] (0,2) circle [radius=0.05];
\draw [fill] (2,2) circle [radius=0.05];
\draw [fill] (0,0) circle [radius=0.05];
\draw [fill] (1,0) circle [radius=0.05];
\draw [fill] (2,0) circle [radius=0.05];

\draw [middlearrow={stealth}] (6,2)--(8,2);
\draw [middlearrow={stealth}] (6,2)--(7,0);
\draw [middlearrow={stealth}] (8,2)--(7,0);
\draw [middlearrow={stealth}] (6,0)--(7,0);
\draw [middlearrow={stealth}] (7,0)--(8,0);

\node [above] at (6,2) {\tiny $2$};
\node [above] at (8,2) {\tiny $3$};
\node [above] at (7,2) {\tiny $x_{m}$};
\node [left] at (6.5,1) {\tiny $a_{m+1}$};
\node [right] at (7.5,1) {\tiny $a_{m}$};
\node [below] at (6.5,0) {\tiny $x_{m}$};
\node [below] at (7.5,0) {\tiny $x_{m}$};

\node [above] at (7,0.2) {\tiny $1$};

\draw [fill] (6,2) circle [radius=0.05];
\draw [fill] (8,2) circle [radius=0.05];
\draw [fill] (6,0) circle [radius=0.05];
\draw [fill] (7,0) circle [radius=0.05];
\draw [fill] (8,0) circle [radius=0.05];
\end{tikzpicture}
\end{center}
\caption{A pair of adjacent $2$-vertex and $3$-vertex on $t_{i+1}$ generates a $1$-vertex on $t_{i+2}$.}
\end{figure}

Finally, for a pair of adjacent $3$-vertices, we have only one possibility up to orientation:
\begin{figure}[H]
\begin{center}
\begin{tikzpicture}[scale=0.9]

\draw [middlearrow={stealth}] (0,2)--(2,2);
\draw [middlearrow={stealth}] (0,2)--(1,0);
\draw [middlearrow={stealth}] (2,2)--(1,0);
\draw [middlearrow={stealth}] (0,0)--(1,0);
\draw [middlearrow={stealth}] (1,0)--(2,0);

\node [above] at (0,2) {\tiny $3$};
\node [above] at (2,2) {\tiny $3$};
\node [above] at (1,2) {\tiny $x_{m}$};
\node [left] at (0.5,1) {\tiny $a_{m+1}$};
\node [right] at (1.5,1) {\tiny $a_{m}$};
\node [below] at (0.5,0) {\tiny $x_{m}$};
\node [below] at (1.5,0) {\tiny $x_{m}$};

\node [above] at (1,0.2) {\tiny $1$};

\draw [fill] (0,2) circle [radius=0.05];
\draw [fill] (2,2) circle [radius=0.05];
\draw [fill] (0,0) circle [radius=0.05];
\draw [fill] (1,0) circle [radius=0.05];
\draw [fill] (2,0) circle [radius=0.05];
\end{tikzpicture}
\end{center}
\caption{A pair of adjacent $3$-vertex on $t_{i+1}$ generates a $1$-vertex on $t_{i+2}$.}
\end{figure}
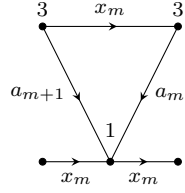

\item[(3)] Since we assume that there are no $0$-vertices on $t_{i}$, the number of vertices on $t_{i+1}$ is no less than the number of vertices on $t_{i}$, and the number of vertices on $t_{i+2}$ is also no less than the number of vertices on $t_{i+1}$. Thus, we have
\begin{align*}
|t_{i}|&=(\text{number of vertices on $t_{i}$})-1                                  \\
       &\leq(\text{number of vertices on $t_{i+1}$})-1                             \\
       &=|t_{i+1}|  
\end{align*}
and 
\begin{align*}
|t_{i+1}|&=(\text{number of vertices on $t_{i+1}$})-1                                  \\
       &\leq(\text{number of vertices on $t_{i+2}$})-1                             \\
       &=|t_{i+2}|.
\end{align*}
Hence, $|t_{i}|\leq|t_{i+1}|\leq|t_{i+2}|$. 
\end{enumerate}
\end{proof}

\begin{lemma}\label{two consecutive corridor with different orientations}
Let $C_{i}$ and $C_{i+1}$ be two consecutive corridors in a stack $T$ as in the Lemma \ref{two consecutive corridor with the same orientation}. Let the arrows on the edges labeled by $u'_{i},u''_{i}$ and $u'_{i+1},u''_{i+1}$ have different orientations. There are two cases, as shown in Figure \ref{Edges $u'_{i},u''_{i}$ and $u'_{i+1},u''_{i+1}$ have different orientations.}. Assume that there are no $0$-vertices on $t_{i}$. Then $|t'_{i+2}|\leq|t_{i+2}|$, where $t_{i+2}$ is in Figure \ref{picture of two consecutive corridor with the same orientation}.
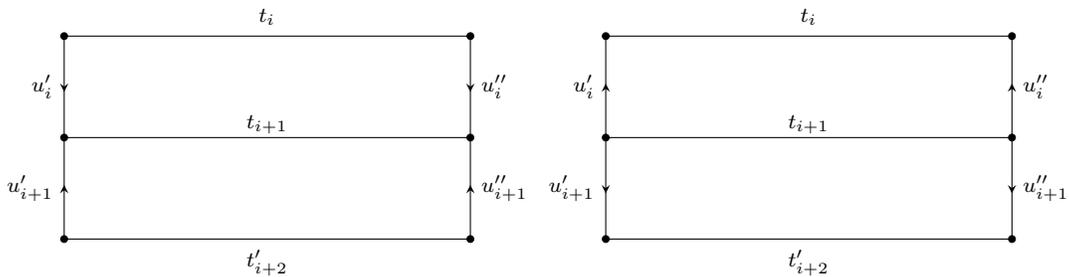
\begin{figure}[H]
\begin{center}
\begin{tikzpicture}[scale=0.9]

\node [above] at (3,3) {\tiny $t_{i}$};
\draw (0,3)--(6,3);

\draw [fill] (0,3) circle [radius=0.05];
\draw [fill] (6,3) circle [radius=0.05];

\node [left] at (0,2.25) {\tiny $u'_{i}$};
\draw [middlearrow={stealth}] (0,3)--(0,1.5);
\draw [middlearrow={stealth}] (6,3)--(6,1.5);
\node [right] at (6,2.25) {\tiny $u''_{i}$};

\draw (0,1.5)--(6,1.5);

\node [above] at (3,1.4) {\tiny $t_{i+1}$};
\draw [fill] (0,1.5) circle [radius=0.05];
\draw [fill] (6,1.5) circle [radius=0.05];

\node [left] at (0,0.75) {\tiny $u'_{i+1}$};
\draw [middlearrow={stealth reversed}] (0,1.5)--(0,0);
\draw [middlearrow={stealth reversed}] (6,1.5)--(6,0);
\node [right] at (6,0.75) {\tiny $u''_{i+1}$};

\draw (0,0)--(6,0);

\node [below] at (3,0) {\tiny $t'_{i+2}$};
\draw [fill] (0,0) circle [radius=0.05];
\draw [fill] (6,0) circle [radius=0.05];



\node [above] at (11,3) {\tiny $t_{i}$};
\draw (8,3)--(14,3);

\draw [fill] (8,3) circle [radius=0.05];
\draw [fill] (14,3) circle [radius=0.05];

\node [left] at (8,2.25) {\tiny $u'_{i}$};
\draw [middlearrow={stealth reversed}] (8,3)--(8,1.5);
\draw [middlearrow={stealth reversed}] (14,3)--(14,1.5);
\node [right] at (14,2.25) {\tiny $u''_{i}$};

\draw (8,1.5)--(14,1.5);

\node [above] at (11,1.4) {\tiny $t_{i+1}$};
\draw [fill] (8,1.5) circle [radius=0.05];
\draw [fill] (14,1.5) circle [radius=0.05];

\node [left] at (8,0.75) {\tiny $u'_{i+1}$};
\draw [middlearrow={stealth}] (8,1.5)--(8,0);
\draw [middlearrow={stealth}] (14,1.5)--(14,0);
\node [right] at (14,0.75) {\tiny $u''_{i+1}$};

\draw (8,0)--(14,0);

\node [below] at (11,0) {\tiny $t'_{i+2}$};
\draw [fill] (8,0) circle [radius=0.05];
\draw [fill] (14,0) circle [radius=0.05];

\end{tikzpicture}
\end{center}
\caption{Edges $u'_{i},u''_{i}$ and $u'_{i+1},u''_{i+1}$ have different orientations.}
\label{Edges $u'_{i},u''_{i}$ and $u'_{i+1},u''_{i+1}$ have different orientations.}
\end{figure}
\end{lemma}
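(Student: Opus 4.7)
The plan is a local, vertex-by-vertex comparison of the flipped corridor $C_{i+1}$ (producing $t'_{i+2}$) against the same-orientation corridor $C_{i+1}$ (producing $t_{i+2}$, treated in Lemma \ref{two consecutive corridor with the same orientation}). The crucial observation is that the corridor $C_i$, the word $t_{i+1}$, and the labels and orientations of its edges are identical in both lemmas; only the arrows on the legs $u'_{i+1}, u''_{i+1}$ below $t_{i+1}$ have been reversed. By Lemma \ref{Properties of one corridor in a stack}(1), the area of each corridor equals $|t_{i+1}| + |t_{i+2}|$, so to conclude $|t'_{i+2}| \leq |t_{i+2}|$ it suffices to compare, for each vertex $v$ of $t_{i+1}$, the number of descending triangles meeting $v$ in $C_{i+1}$ versus in the flipped corridor.

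The substance is a case analysis mirroring the one in Lemma \ref{two consecutive corridor with the same orientation}. For each interior vertex $v$ of $t_{i+1}$, recall from Lemma \ref{two consecutive corridor with the same orientation}(1) that $v$ is a $j_v$-vertex of $C_{i+1}$ with $j_v \in \{1,2,3\}$; write $j'_v$ for the corresponding type of $v$ as a vertex of the flipped $C_{i+1}$. I would fix the orientation pattern of the two edges of $t_{i+1}$ meeting at $v$ (as enumerated in Lemma \ref{Cases of 1-vertices} and the surrounding figures) and then list the triangles of the flipped corridor that are compatible with those edges and with the reversed leg arrows, using the second row of Figure \ref{cases for oriented 2-cells}. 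In each configuration I expect the flipped orientation of the $a$-edges (or $b$-edges) to force triangles to merge where they previously had to fan out: configurations that produced a $3$-vertex of $C_{i+1}$ now produce only a $1$-vertex of the flipped corridor, $2$-vertices drop to $1$- or $0$-vertices, and $1$-vertices remain $1$-vertices, giving $j'_v \leq j_v$ in every case. Summing over all interior $v$ and handling the two endpoints separately (each contributes at most one additional edge to $t'_{i+2}$, absorbed into the sum), one obtains
\begin{align*}
|t'_{i+2}| \;=\; \sum_{v \in t_{i+1}} j'_v \;\leq\; \sum_{v \in t_{i+1}} j_v \;=\; |t_{i+2}|.
\end{align*}

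The main obstacle is the case analysis itself: while not conceptually deep, it is combinatorially demanding, as one must enumerate every admissible pair of consecutive $x_m$-edges on $t_{i+1}$ together with every orientation of $u'_{i+1}$ and $u''_{i+1}$, and in each subcase verify via the Dicks--Leary relators that the only triangulation of the flipped $C_{i+1}$ extending the prescribed boundary data produces no more descending edges at $v$ than the original $C_{i+1}$ did. Bookkeeping the arrows on the $a$'s (or $b$'s) to rule out the impossible triangle orientations is the only delicate point, and once $j'_v \leq j_v$ is secured at every interior vertex, the summation step and the endpoint correction are immediate.
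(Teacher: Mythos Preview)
Your vertex-by-vertex comparison is the right shape, but you are missing the key structural observation that drives the paper's proof: when the leg arrows of $C_i$ and $C_{i+1}$ point in opposite directions, the two corridors \emph{cannot} be of the same scheme. If both were $\alpha$-corridors (or both $\beta$-corridors), then across any edge of $t_{i+1}$ the triangle in $C_i$ and the triangle in $C_{i+1}$ would be mirror images carrying identical labels, and cancelling them would produce a smaller van Kampen diagram---contradicting minimality. Your plan to ``use the second row of Figure~\ref{cases for oriented 2-cells}'' implicitly assumes $C_{i+1}$ is still an $\alpha$-corridor with reversed legs; that configuration does not occur, and the case analysis you outline would be carried out on a phantom diagram.

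Once you know $C_i$ is an $\alpha$-corridor and $C_{i+1}$ a $\beta$-corridor (or vice versa), the paper avoids your full case enumeration with a single clean observation: each $j$-vertex on $t_i$ creates a fan of $j-1$ consecutive vertices on $t_{i+1}$, and because the descending edges in $C_{i+1}$ now carry $b$-labels governed by the \emph{same} $x$-subscripts as the ascending $a$-edges, all $j+1$ of those descending edges converge to a single vertex on $t'_{i+2}$. In your language, every one of those $j-1$ interior vertices is a $0$-vertex of $C_{i+1}$, not a $2$-vertex as in the same-orientation case. This immediately gives $|t'_{i+2}|\le|t_{i+2}|$ without checking the $1$- and $3$-vertex configurations separately. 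So your summation $\sum j'_v \le \sum j_v$ is salvageable, but only after you first establish the $\alpha/\beta$ dichotomy via minimality, and the argument then becomes much shorter than the case analysis you anticipate.
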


\begin{proof} 
We prove the case when $u'_{i},u''_{i},u'_{i+1},u''_{i+1}$ are pointing toward $t_{i+i}$. The other case is similar. We claim that in this case, $C_{i}$ and $C_{i+1}$ cannot be both $\alpha$-corridors or $\beta$-corridors. Suppose $C_{i}$ and $C_{i+1}$ are both $\alpha$-corridors (respectively $\beta$-corridors), then there are two triangles that share a unique common edge on $t_{i+1}$:
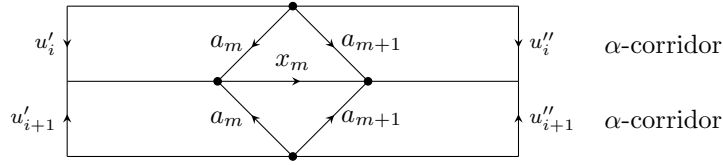
\begin{figure}[H]
\begin{center}
\begin{tikzpicture}

\draw (-2,1)--(4,1);

\draw [fill] (1,1) circle [radius=0.05];

\draw [middlearrow={stealth}] (-2,1)--(-2,0);
\draw [middlearrow={stealth}] (1,1)--(0,0);
\draw [middlearrow={stealth}] (1,1)--(2,0);
\draw [middlearrow={stealth}] (4,1)--(4,0);

\node [left] at (-2,0.5) {\tiny $u'_{i}$};
\node [left] at (0.5,0.5) {\footnotesize $a_{m}$};
\node [right] at (1.5,0.5) {\footnotesize $a_{m+1}$};
\node [right] at (4,0.5) {\tiny $u''_{i}$};
\node [right] at (5,0.5) {\footnotesize $\alpha$-corridor};

\draw (-2,0)--(0,0);
\draw [middlearrow={stealth}] (0,0)--(2,0);
\draw (2,0)--(4,0);

\draw [fill] (0,0) circle [radius=0.05];
\draw [fill] (2,0) circle [radius=0.05];
\node [above] at (1,0) {\footnotesize $x_{m}$};

\node [right] at (5,-0.5) {\footnotesize $\alpha$-corridor};

\draw [middlearrow={stealth reversed}] (-2,0)--(-2,-1);
\draw [middlearrow={stealth}] (1,-1)--(0,0);
\draw [middlearrow={stealth}] (1,-1)--(2,0);
\draw [middlearrow={stealth reversed}](4,0)--(4,-1);

\draw (-2,-1)--(4,-1);

\node [left] at (-2,-0.5) {\tiny $u'_{i+1}$};
\node [left] at (0.5,-0.5) {\footnotesize $a_{m}$};
\node [right] at (1.5,-0.5) {\footnotesize $a_{m+1}$};
\node [right] at (4,-0.5) {\tiny $u''_{i+1}$};

\draw [fill] (1,-1) circle [radius=0.05];
\end{tikzpicture}
\end{center} 
\caption{Two triangles from different $\alpha$-corridors with opposite orientations.}
\label{two triangles from different alpha-corridors with opposite orientations.}
\end{figure}
\noindent The situation as shown in Figure \ref{two triangles from different alpha-corridors with opposite orientations.} contradicts the fact that $T$ is part of a minimal van Kampen diagram since we can obtain a smaller van Kampen diagram by canceling these two triangles. This proves the claim.

Now suppose $C_{i}$ is an $\alpha$-corridor and $C_{i+1}$ is a $\beta$-corridor. By definition, each $j$-vertex on $t_{i}$ is adjacent to $j+1$ consecutive distinct vertices on $t_{i+1}$: 
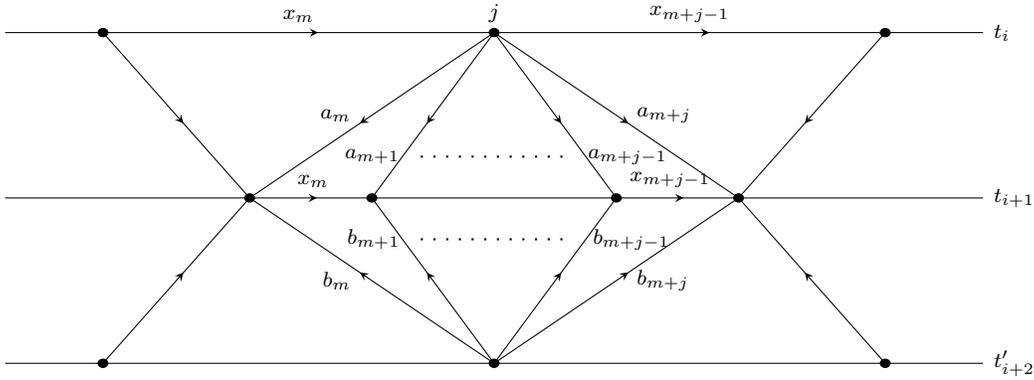
\begin{figure}[H]
\begin{center}
\begin{tikzpicture}[xscale=0.65, yscale=0.55]

\draw (-10,4)--(-8,4);
\draw [middlearrow={stealth}] (-8,4)--(0,4);
\draw [middlearrow={stealth}] (0,4)--(8,4);
\draw (8,4)--(10,4);

\node [above] at (-4,4) {\tiny $x_{m}$};
\node [above] at (0,4) {\tiny $j$};
\node [above] at (4,4) {\tiny $x_{m+j-1}$};
\node [right] at (10,4) {\tiny $t_{i}$};

\draw [fill] (-8,4) circle [radius=0.1];
\draw [fill] (0,4) circle [radius=0.1];
\draw [fill] (8,4) circle [radius=0.1];

\draw [middlearrow={stealth}] (-8,4)--(-5,0);
\draw [middlearrow={stealth}] (0,4)--(-5,0);
\draw [middlearrow={stealth}] (0,4)--(-2.5,0);
\draw [thick, loosely dotted] (-1.5,1)--(1.5,1);
\draw [middlearrow={stealth}] (0,4)--(2.5,0);
\draw [middlearrow={stealth}] (0,4)--(5,0);
\draw [middlearrow={stealth}] (8,4)--(5,0);

\node [left] at (-2.7,2) {\tiny $a_{m}$};
\node [left] at (-1.7,1) {\tiny $a_{m+1}$};
\node [right] at (1.7,1) {\tiny $a_{m+j-1}$};
\node [right] at (2.7,2) {\tiny $a_{m+j}$};

\draw (-10,0)--(-5,0);
\draw [middlearrow={stealth}] (-5,0)--(-2.5,0);
\draw (-2.5,0)--(2.5,0);
\draw [middlearrow={stealth}] (2.5,0)--(5,0);
\draw (5,0)--(10,0);

\node [above] at (-3.7,0) {\tiny $x_{m}$};
\node [above] at (3.6,0) {\tiny $x_{m+j-1}$};
\node [right] at (10,0) {\tiny $t_{i+1}$};

\draw [fill] (-5,0) circle [radius=0.1];
\draw [fill] (-2.5,0) circle [radius=0.1];
\draw [fill] (2.5,0) circle [radius=0.1];
\draw [fill] (5,0) circle [radius=0.1];


\draw [middlearrow={stealth}] (-8,-4)--(-5,0);
\draw [middlearrow={stealth}] (0,-4)--(-5,0);
\draw [middlearrow={stealth}] (0,-4)--(-2.5,0);
\draw [thick, loosely dotted] (-1.5,-1)--(1.5,-1);
\draw [middlearrow={stealth}] (0,-4)--(2.5,0);
\draw [middlearrow={stealth}] (0,-4)--(5,0);
\draw [middlearrow={stealth}] (8,-4)--(5,0);

\node [left] at (-2.7,-2) {\tiny $b_{m}$};
\node [left] at (-1.7,-1) {\tiny $b_{m+1}$};
\node [right] at (1.8,-1) {\tiny $b_{m+j-1}$};
\node [right] at (2.7,-2) {\tiny $b_{m+j}$};

\draw (-10,-4)--(-8,-4);
\draw (-8,-4)--(0,-4);
\draw (0,-4)--(8,-4);
\draw (8,-4)--(10,-4);

\node [right] at (10,-4) {\tiny $t'_{i+2}$};

\draw [fill] (-8,-4) circle [radius=0.1];
\draw [fill] (0,-4) circle [radius=0.1];
\draw [fill] (8,-4) circle [radius=0.1];

\end{tikzpicture}
\end{center}
\caption{Every pair of adjacent vertices of the distinct $j+1$ vertices on $t_{i+1}$ generates the same vertex on $t'_{i+2}$.}
\label{alpha corridor sits above beta corridor}
\end{figure}
\noindent From Figure \ref{alpha corridor sits above beta corridor} we see that every pair of adjacent vertices of these $j+1$ consecutive distinct vertices on $t_{i+1}$ generates the same vertex on $t'_{i+2}$ since $C_{i+1}$ is a $\beta$-corridor. That is, each $j$-vertex on $t_{i}$ creates $j-1$ vertices on $t_{i+1}$ and they are $0$-vertices. It follows that the number of vertices on $t'_{i+2}$ is less than the numbers of vertices on $t_{i+2}$ in Lemma \ref{two consecutive corridor with the same orientation}. Thus, we have
\begin{align*}
|t'_{i+2}|&=\text{(number of vertices on $t'_{i+2}$)}-1                    \\
           &\leq\text{(number of vertices on $t_{i+2}$)}-1     \\
           &=|t_{i+2}|.
\end{align*}
This completes the proof.
\end{proof}

Now, we prove Lemma \ref{stacks are cubic}.

\begin{proof}[Proof of Lemma \ref{stacks are cubic}.]
Let $T$ be a stack with labelings as shown in Figure \ref{Boundary words of corridors in a stack $T$.}. Recall that $t_{0},\cdots,t_{h}$ are words in the free group $F(x_{1},\cdots,x_{k})$ and $u'_{i},u''_{i}$ are both letters either in the corridor scheme $\alpha=\lbrace a_{1},\cdots,a_{k+1}\rbrace$ or $\beta=\lbrace b_{1},\cdots,b_{k+1}\rbrace$. Since the area of a stack $T$ is the sum of the areas of the corridors that contained in $T$, by Lemma \ref{Properties of one corridor in a stack} we have
\begin{align*}
\text{Area}(T)=|t_{0}|+2|t_{1}|+\cdots+2|t_{h-1}|+|t_{h}|\leq 2\left(|t_{0}|+\cdots+|t_{h}|\right).
\end{align*}
\noindent Thus, upper bounds on $|t_{0}|,\cdots,|t_{n}|$ give an upper on $\mathrm{Area}(T)$. In order to obtain an upper bound on $|t_{i}|$, we need a few assumptions. Recall that for a fixed $k$, each vertex of $T$ is at most a $k$-vertex. The assumptions we need here are
\begin{enumerate}
\item[(1)] the vertices on $t_{0}$ and the two vertices at the two ends of $t_{i}$ are $k$-vertices;
\item[(2)] the arrows on $u'_{i},u''_{i}$, $i=1,\cdots,h$, are pointing away from $t_{0}$ (Lemma \ref{two consecutive corridor with the same orientation} and Lemma \ref{two consecutive corridor with different orientations}).
\end{enumerate}
The first assumption gives us an upper bound on $|t_{0}|$. From Lemma \ref{Properties of one corridor in a stack} we know that the length of $t_{i+1}$ depends on the types of the vertices on $t_{i}$. The second assumption ensures that $|t_{i}|\leq|t_{i+1}|\leq|t_{i+2}|$. We compute $|t_{i}|$ as follows.

For $|t_{1}|$, since $|t_{0}|=l$ and all the vertices on $t_{0}$ are $k$-vertex, we have $|t_{1}|=k(l+1)$. 

On $t_{1}$, there are two $k$-vertices at the two ends. Lemma \ref{two consecutive corridor with the same orientation} tells us that every pair of adjacent vertices on $t_{0}$ generates either a $1$-vertex or a $3$-vertex on $t_{1}$, but we may assume that they are all $3$-vertices so that we can get the largest possible $|t_{2}|$. So the number of $3$-vertices on $t_{1}$ is $l$; other vertices on $t_{1}$ are $2$-vertices and there are $(k-1)(l+1)$ of them. 	Knowing the types of vertices on $t_{1}$ gives the length of $t_{2}$:
\begin{align*}
|t_{2}|=2\cdot k+l\cdot 3+(k-1)(l+1)\cdot 2=2kl+l+4k-2
\end{align*}

For $i\geq 2$, every pair of adjacent vertices on $t_{i-1}$ generates a $1$-vertex on $t_{i}$ by Lemma \ref{two consecutive corridor with the same orientation}, so the number of $1$-vertices on $t_{i}$ is $|t_{i-1}|$. Every $2$-vertex on $t_{i-1}$ creates a $2$-vertex on $t_{i}$ and the two $k$-vertices at the ends of $t_{i-1}$ creates $(k-1)$ $2$-vertices on $t_{i}$. So the number of $2$-vertices on $t_{i}$ is the number of $2$-vertices on $t_{i-1}$ plus $2(k-1)$:
\begin{align*}
(kl+l+k-1)+(i-1)\cdot2(k-1)=kl+l+(2i-1)k-2i+1.
\end{align*}
Having the information of the vertices on $t_{i}$ we get
\begin{align*}
|t_{i+1}|&=|t_{i-1}|\cdot 1+\left[kl+l+(2i-1)k-2i+1\right]\cdot 2+2\cdot k  \\
         &=|t_{i-1}|+2kl+2l+4ik-4i+2
\end{align*}
for $i=2,\cdots,h-1$. Let
\begin{align*}
d(i)=|t_{i+1}|-|t_{i-1}|=2kl+2l+4ik-4i+2, \ \ \ i=2,\cdots,h-1,
\end{align*}
then $\lbrace d(i)\rbrace$ is an arithmetic sequence whose difference is $4k-4$ and
$$
d(i+2)-d(i)=8k-8.
$$

When $i$ is even, we have
\begin{align*}
|t_{i+1}|
&=|t_{1}|+d(2)+d(4)+\cdots+d(i)              \\                                                 
&=|t_{1}|+\frac{i}{2}d(2)+i(i-2)(k-1),
\end{align*}
and when $i$ is odd, we have
\begin{align*}
|t_{i+1}|
&=|t_{2}|+d(3)+d(5)+\cdots+d(i)                                                 \\
&=|t_{2}|+\left(\frac{i-1}{2}\right)d(3)+(i-1)(i-3)(k-1).
\end{align*}

When $h$ is odd, we have
\begin{align*}
\text{Area}(T)
\leq 2\sum^{h}_{i=1}|t_{i}|
=2\left(|t_{0}|+|t_{1}|+|t_{2}|+\sum^{h-1}_{\substack{i=2 \\ \text{$i$ is even}}}|t_{i+1}|+\sum^{h-2}_{\substack{i=3 \\ \text{$i$ is odd}}}|t_{i+1}|\right).  
\end{align*}
Furthermore, 
\begin{align*}
\sum^{h-1}_{\substack{i=2 \\ \text{$i$ is even}}}|t_{i+1}|
&=\sum^{h-1}_{\substack{i=2 \\ \text{$i$ is even}}}\left[|t_{1}|+\frac{i}{2}d(2)+i(i-2)(k-1)\right]           \\
&\leq h|t_{1}|+\left[d(2)-2(k-1)\right]h^{2}+(k-1)h^{3}  \\
&\leq 12klh^{2}+kh^{3}.
\end{align*}  
and
\begin{align*}
\sum^{h-2}_{\substack{i=3 \\ \text{$i$ is odd}}}|t_{i+1}|
&=\sum^{h-2}_{\substack{i=3 \\ \text{$i$ is odd}}}\left[|t_{2}|+\left(\frac{i-1}{2}\right)d(3)+(i-1)(i-3)(k-1)\right]     \\                              
&\leq h|t_{2}|+d(3)h^{2}+kh^{3}+kh^{2}+2h         \\
&\leq 24klh^{2}+kh^{3}.                                                                 
\end{align*}
Thus, 
\begin{align*}
\text{Area}(T)
&\leq 2\left[|t_{0}|+|t_{1}|+|t_{2}|+(12klh^{2}+kh^{3})+(24klh^{2}+kh^{3})\right]      \\
&\leq 92k(h^{3}+lh^{2}).
\end{align*}
When $h$ is even, the computation is similar. Hence, $\text{Area}(T)\leq K(h^{3}+lh^{2})$, where $K$ is a positive constant which does not depend on $l$ and $h$.
\end{proof}

\begin{lemma}\label{The suspension of a path of length at least 3 is at most cubic}
Let $\Gamma$ be the graph as shown in Figure \ref{The suspension of a path of length at least $3$ with orientation.}. Then $\delta_{H_{\Gamma}}(n)\preceq n^{3}$. 
\end{lemma}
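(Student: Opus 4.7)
The plan is to take an arbitrary freely reduced word $w \equiv_{H_\Gamma} 1$ with $|w| \leq n$, fix a minimal van Kampen diagram $\Delta$ for $w$, and show $\mathrm{Area}(\Delta) \preceq n^3$ by decomposing $\Delta$ into stacks and applying Lemma~\ref{stacks are cubic}. Observe first that, with the corridor schemes $\alpha = \lbrace a_1, \ldots, a_{k+1}\rbrace$ and $\beta = \lbrace b_1, \ldots, b_{k+1}\rbrace$, every relator of $H_\Gamma$ contains exactly two $\alpha$-edges or exactly two $\beta$-edges. Hence each triangle of $\Delta$ lies in a unique $\alpha$- or $\beta$-corridor, and since each corridor contributes two leg-edges to $\partial\Delta$, the total number of corridors in $\Delta$ is at most $n/2$.

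Next, I would partition the triangles of $\Delta$ into stacks $T_1, \ldots, T_N$ in the sense of Definition~\ref{definition of a stack}. Traversing $\partial\Delta$, alternate between maximal $\mathcal{X}$-subwords and maximal runs of letters from $\alpha \cup \beta$. Each $\mathcal{X}$-subword on $\partial\Delta$ plays the role of the top of some stack $T_j$, whose legs are the two adjacent runs of $\alpha \cup \beta$-letters and whose height is the number of corridors bundled between them. When a given run of leg-letters is matched by corridors with several different runs on the opposite side of $\partial\Delta$, one splits the collection further by an inductive peeling: identify an outermost stack whose top is a short $\mathcal{X}$-subword of $\partial\Delta$, apply Lemma~\ref{stacks are cubic} to it, delete it from $\Delta$, and recurse on the smaller minimal subdiagram. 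Because $\alpha$- and $\beta$-corridors never cross and $\Delta$ is minimal, this procedure yields a genuine partition of the triangles into stacks whose tops are either $\mathcal{X}$-subwords of $\partial\Delta$ or interior $x$-words shared with a neighbouring stack.

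Writing $l_j$ for the top length and $h_j$ for the height of $T_j$, Lemma~\ref{stacks are cubic} gives $\mathrm{Area}(T_j) \leq K(l_j h_j^2 + h_j^3)$. Since distinct stacks have disjoint legs on $\partial\Delta$, we have $\sum_j h_j \leq n/2$, and since distinct tops are either disjoint subwords of $\partial\Delta$ or paired against an adjacent stack's bottom, $\sum_j l_j \leq n$. Summing,
\begin{align*}
\mathrm{Area}(\Delta) \;\leq\; K \sum_{j=1}^{N} \bigl(l_j h_j^2 + h_j^3\bigr) \;\leq\; K \Bigl(\max_j h_j\Bigr)^{\!2} \sum_j l_j \;+\; K \Bigl(\sum_j h_j\Bigr)^{\!3} \;\leq\; \tfrac{3K}{8}\, n^3,
\end{align*}
and therefore $\delta_{H_\Gamma}(n) \preceq n^3$. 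The main obstacle is executing the stack decomposition carefully enough that the orientation assumption and the no-zero-vertex assumption underlying Lemma~\ref{stacks are cubic} are inherited by every $T_j$; this is where minimality of $\Delta$ and the peeling argument are essential, since otherwise consecutive corridors in a candidate stack might meet with the wrong orientations or collapse a top, and the inductive control on $|t_i|$ from the proof of Lemma~\ref{stacks are cubic} could break down.
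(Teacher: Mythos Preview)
Your overall strategy---decompose $\Delta$ into stacks and invoke Lemma~\ref{stacks are cubic}---matches the paper's, but the step where you claim $\sum_j l_j \leq n$ has a genuine gap. When a stack $T_j$ has its top on an \emph{interior} $x$-word (paired with a neighbouring stack's bottom, as in your Case-3-type configurations), that interior word can be far longer than any $\mathcal{X}$-subword of $\partial\Delta$: by the computation inside the proof of Lemma~\ref{stacks are cubic}, the bottom of a stack with top length $l$ and height $h$ can have length of order $l + h^{2}$, so an interior top inherited from an adjacent stack's bottom may be comparable to $n^{2}$, not $n$. Your peeling does not control this, because removing an outermost stack exposes its bottom as new boundary and can \emph{increase} the perimeter of the residual subdiagram. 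Hence the inequality $\sum_j l_j \leq n$ is unjustified, and without it your final summation does not yield a cubic bound.

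The paper sidesteps this by never summing the $l_j$. Instead it builds a single comparison stack $T'$ whose top is the concatenation $B_{1}\cdots B_{s}$ of all boundary $\mathcal{X}$-subwords (total length $l \leq n$) and whose legs are the concatenations $U'_{1}\cdots U'_{m}$ and $U''_{1}\cdots U''_{m}$ (height $h \leq n$), and argues in three cases that $\mathrm{Area}(\Delta) \leq \mathrm{Area}(T')$: each actual stack $T_i$ embeds into a disjoint substack $T'_i$ of $T'$ at the height determined by its legs, and the widening of $T'$ guarantees the top of $T'_i$ is at least as long as whichever base of $T_i$ lies on $\partial\Delta$. One application of Lemma~\ref{stacks are cubic} to $T'$ then gives $\mathrm{Area}(T') \leq K(lh^{2}+h^{3}) \leq 2Kn^{3}$. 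Incidentally, your stated ``main obstacle'' about the orientation and no-zero-vertex hypotheses is not the real difficulty: those are worst-case assumptions used \emph{inside} the proof of Lemma~\ref{stacks are cubic} to derive an upper bound valid for every stack in a minimal diagram, so the lemma applies to each $T_j$ as stated.
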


\begin{proof}
Let $w$ be a freely reduced word of length at most $n$ that represents the identity in $H_{\Gamma}$. Let $\Delta$ be a minimal van Kampen diagram for $w$. Choose corridor schemes $\alpha=\lbrace a_{1},\cdots,a_{k+1}\rbrace$ and $\beta=\lbrace b_{1},\cdots,b_{k+1}\rbrace$ for $\Gamma$, then the van Kampen diagram $\Delta$ consists of stacks, $T_{1},\cdots,T_{m}$. Denote the two legs of $T_{i}$ by $U'_{i}$ and $U''_{i}$, and the height of $T_{i}$ by $|U'_{i}|=|U''_{i}|=h_{i}$, $i=1,\cdots,m$. Note that $U'_{i}$ and $U''_{i}$ are words on $\partial\Delta$ that consist of letters in the corridor schemes $\alpha$ and $\beta$. The boundary word $w$ is a cyclic permutation of the following word, and we also denoted it by $w$:
\begin{align*}
w=A_{1}B_{1}A_{2}B_{2}\cdots A_{r}B_{s},
\end{align*}
where each of the words $A_{1},\cdots,A_{r}$ consists of one or more words from $U'_{1},\cdots,U'_{m}$ and $U''_{1},\cdots,U''_{m}$; each of the words $B_{1},\cdots,B_{s}$ is a word in the free group $F(x_{1},\cdots,x_{k})$. The length of each of the words $A_{1},\cdots,A_{r}$ is either the height of a single stack or the sum of the heights of multiple stacks; each of the words $B_{1},\cdots,B_{s}$ is a base of a stack or part of a base of a stack. Denote the length of the words $B_{1},\cdots,B_{s}$ by $l_{1},\cdots,l_{s}$, respectively. Let 
\begin{align*}
h=|A_{1}|+\cdots+|A_{r}|=2(h_{1}+\cdots+h_{m})=2\sum^{m}_{i=1}h_{i}
\end{align*}
and
\begin{align*}
l=|B_{1}|+\cdots+|B_{s}|=\sum^{s}_{i=1}l_{i}.
\end{align*}

Consider a stack $T'$ whose top is the word $B_{1}\cdots B_{s}$ of length $l$ and whose legs are $U'_{1}\cdots U'_{m}$ and $U''_{1}\cdots U''_{m}$ and the height $h=|U'_{1}\cdots U'_{m}|=|U''_{1}\cdots U''_{m}|$, assuming that all the the arrows on $U'_{i},U''_{i}$ are pointing away from the top; see Figure \ref{The stack $T'$ with labels on the top and the legs.}. 
\begin{figure}[H]
\begin{center}
\begin{tikzpicture}[scale=0.4]

\draw (0,10)--(2,10);
\draw (2,10)--(4,10);
\draw (4,10)--(8,10);
\draw (8,10)--(10,10);

\node [above] at (1,10) {\tiny $B_{1}$};
\node [above] at (3,10) {\tiny $B_{2}$};
\node [above] at (9,10) {\tiny $B_{s}$};

\draw [fill] (0,10) circle [radius=0.1];
\draw [fill] (2,10) circle [radius=0.1];
\draw [fill] (4,10) circle [radius=0.1];
\draw [fill] (8,10) circle [radius=0.1];
\draw [fill] (10,10) circle [radius=0.1];

\draw [middlearrow={stealth}] (0,10)--(-1,8);
\draw [middlearrow={stealth}] (-1,8)--(-2,6);
\draw (-2,6)--(-4,2);
\draw [middlearrow={stealth}] (-4,2)--(-5,0);

\node [left] at (-0.5,9){\tiny $U'_{1}$};
\node [left] at (-1.5,7){\tiny $U'_{2}$};
\node [left] at (-4.5,1){\tiny $U'_{m}$};

\draw [fill] (-1,8) circle [radius=0.1];
\draw [fill] (-2,6) circle [radius=0.1];
\draw [fill] (-4,2) circle [radius=0.1];
\draw [fill] (-5,0) circle [radius=0.1];

\draw [middlearrow={stealth}] (10,10)--(11,8);
\draw [middlearrow={stealth}] (11,8)--(12,6);
\draw (12,6)--(14,2);
\draw [middlearrow={stealth}] (14,2)--(15,0);

\node [right] at (10.5,9){\tiny $U''_{1}$};
\node [right] at (11.5,7){\tiny $U''_{2}$};
\node [right] at (14.5,1){\tiny $U''_{m}$};

\draw [fill] (11,8) circle [radius=0.1];
\draw [fill] (12,6) circle [radius=0.1];
\draw [fill] (14,2) circle [radius=0.1];
\draw [fill] (15,0) circle [radius=0.1];

\draw (-1,8)--(11,8);
\draw (-2,6)--(12,6);
\draw (-4,2)--(14,2);

\draw [thick, loosely dotted] (5,5.5)--(5,2.5);

\draw (-5,0)--(15,0);

\end{tikzpicture}	
\end{center}
\caption{The stack $T'$.}
\label{The stack $T'$ with labels on the top and the legs.}
\end{figure}

There are three possible cases that the van Kampen diagram $\Delta$ could be. The first case is that every stack has a base that is part of $\partial\Delta$, as shown in Figure \ref{Case 1: every stack has a base as part of Delta.}. The second case is that every stack has a base that is either on  $\partial\Delta$, or part of it is on $\partial\Delta$, as shown in Figure \ref{Case 2: at least one of the stacks such that part of its bases is part of Delta.}. The third case is that there is a stack whose bases are not part of $\partial\Delta$, as shown in Figure \ref{Case 3: at least one of the stacks whose bases are not part of Delta.}. We show that $\text{Area}(\Delta)\leq\text{Area}(T')$ in each of these three cases.
\begin{figure}[H]
\begin{center}
\begin{tikzpicture}[scale=0.7]
\draw (0,0)--(5,0)--(5,4)--(0,4)--(0,0);

\draw [fill] (0,0) circle [radius=0.1];
\draw [fill] (5,0) circle [radius=0.1];
\draw [fill] (5,4) circle [radius=0.1];
\draw [fill] (0,4) circle [radius=0.1];

\node at (2.5,2) {\tiny $T_{1}$};
\node [above] at (2.5,4) {\tiny $B_{1}$};
\node [left] at (0,2) {\tiny $U'_{1}$};
\node [right] at (5,2) {\tiny $U''_{1}$};

\draw (-5,0)--(2,0)--(2,-5)--(0,-5)--(-5,-2)--(-5,0);

\draw [fill] (-5,0) circle [radius=0.1];
\draw [fill] (2,-5) circle [radius=0.1];
\draw [fill] (0,-5) circle [radius=0.1];
\draw [fill] (-5,-2) circle [radius=0.1];

\node at (-1.5,-2.5) {\tiny $T_{2}$};
\node [above] at (-2.5,0) {\tiny $B_{4}$};
\node [left] at (-5,-1) {\tiny $U'_{2}$};
\node [below left] at (-2.5,-3.5) {\tiny $B_{2}$};
\node [below] at (1,-5) {\tiny $U''_{2}$};

\draw (2,0)--(7,0)--(7,-3)--(5,-7)--(2,-7)--(2,0);

\draw [fill] (7,0) circle [radius=0.1];
\draw [fill] (7,-3) circle [radius=0.1];
\draw [fill] (5,-7) circle [radius=0.1];
\draw [fill] (2,-7) circle [radius=0.1];

\node at (4.5,-3.5) {\tiny $T_{3}$};
\node [left] at (2,-6) {\tiny $B_{5}$};
\node [below] at (3.5,-7) {\tiny $U'_{3}$};
\node [below right] at (6,-5) {\tiny $B_{3}$};
\node [right] at (7,-1.5) {\tiny $U''_{3}$};
\node [above] at (6,0) {\tiny $B_{6}$};
\end{tikzpicture}
\end{center}
\caption{Case 1: Every stack $T_{i}$ has a base $B_{i}$ that is part of $\partial\Delta$.}
\label{Case 1: every stack has a base as part of Delta.}
\end{figure}
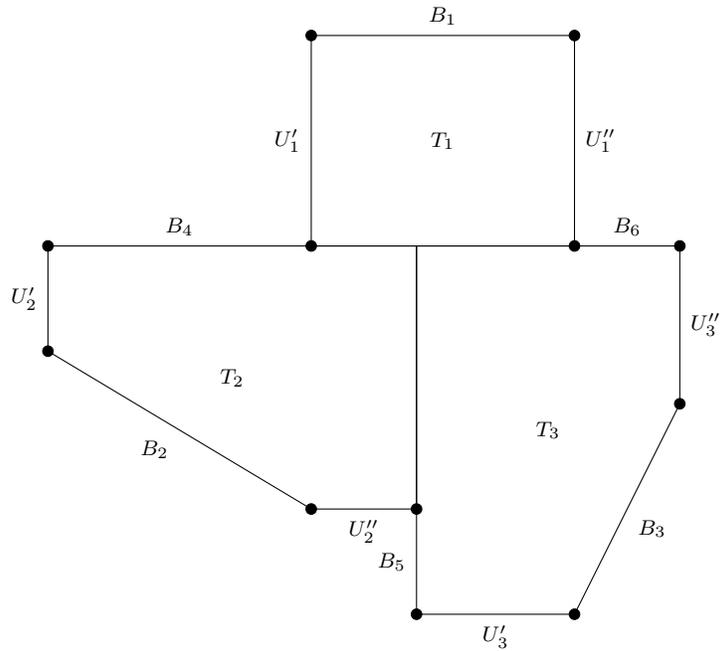
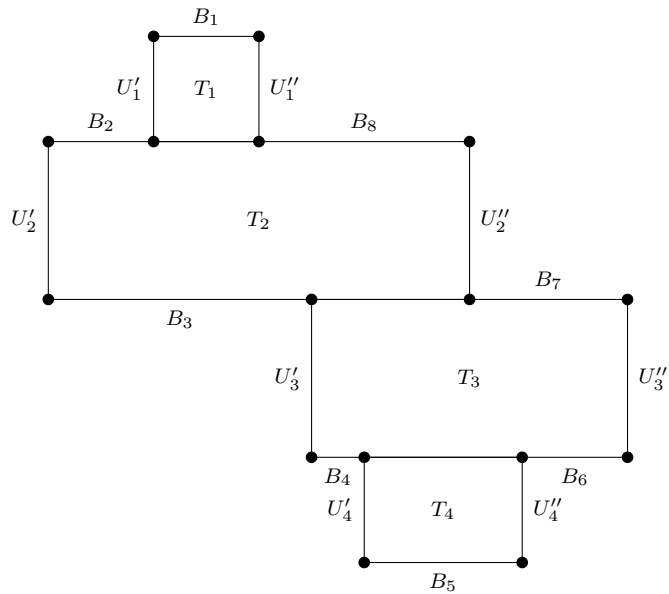
\begin{figure}[H]
\begin{center}
\begin{tikzpicture}[scale=0.7]

\draw (2,3)--(4,3)--(4,5)--(2,5)--(2,3);

\draw [fill] (2,3) circle [radius=0.1];
\draw [fill] (4,3) circle [radius=0.1];
\draw [fill] (4,5) circle [radius=0.1];
\draw [fill] (2,5) circle [radius=0.1];

\node at (3,4) {\tiny $T_{1}$};
\node [above] at (3,5) {\tiny $B_{1}$};
\node [left] at (2,4) {\tiny $U'_{1}$};
\node [right] at (4,4) {\tiny $U''_{1}$};

\draw (0,0)--(8,0)--(8,3)--(0,3)--(0,0);

\draw [fill] (0,3) circle [radius=0.1];
\draw [fill] (0,0) circle [radius=0.1];
\draw [fill] (5,0) circle [radius=0.1];
\draw [fill] (8,0) circle [radius=0.1];
\draw [fill] (8,3) circle [radius=0.1];

\node at (4,1.5) {\tiny $T_{2}$};
\node [above] at (1,3) {\tiny $B_{2}$};
\node [left] at (0,1.5) {\tiny $U'_{2}$};
\node [below] at (2.5,0) {\tiny $B_{3}$};
\node [right] at (8,1.5) {\tiny $U''_{2}$};
\node [above] at (6,3) {\tiny $B_{8}$};

\draw (5,0)--(11,0)--(11,-3)--(5,-3)--(5,0);

\draw [fill] (11,0) circle [radius=0.1];
\draw [fill] (11,-3) circle [radius=0.1];
\draw [fill] (9,-3) circle [radius=0.1];
\draw [fill] (5,-3) circle [radius=0.1];
\draw [fill] (6,-3) circle [radius=0.1];

\node at (8,-1.5) {\tiny $T_{3}$};
\node [left] at (5,-1.5) {\tiny $U'_{3}$};
\node [below] at (5.5,-3) {\tiny $B_{4}$};
\node [below] at (10,-3) {\tiny $B_{6}$};
\node [right] at (11,-1.5) {\tiny $U''_{3}$};
\node [above] at (9.5,0) {\tiny $B_{7}$};

\draw (6,-3)--(9,-3)--(9,-5)--(6,-5)--(6,-3);

\draw [fill] (6,-5) circle [radius=0.1];
\draw [fill] (9,-5) circle [radius=0.1];

\node at (7.5,-4) {\tiny $T_{4}$};
\node [left] at (6,-4) {\tiny $U'_{4}$};
\node [below] at (7.5,-5) {\tiny $B_{5}$};
\node [right] at (9,-4) {\tiny $U''_{4}$};
\end{tikzpicture}
\end{center}
\caption{Case 2: At least one of the stacks such that part of its bases is part of $\partial\Delta$.}
\label{Case 2: at least one of the stacks such that part of its bases is part of Delta.}
\end{figure}

\begin{figure}[H]
\begin{center}
\begin{tikzpicture}[scale=0.7]
\draw (0,5)--(6,5)--(6,8)--(0,8)--(0,5);

\draw [fill] (0,5) circle [radius=0.1];
\draw [fill] (2,5) circle [radius=0.1];
\draw [fill] (4,5) circle [radius=0.1];
\draw [fill] (6,5) circle [radius=0.1];
\draw [fill] (6,8) circle [radius=0.1];
\draw [fill] (0,8) circle [radius=0.1];

\node at (3,6.5) {\tiny $T_{1}$};
\node [above] at (3,8) {\tiny $B_{1}$};
\node [left] at (0,6.5) {\tiny $U'_{1}$};
\node [below] at (1,5) {\tiny $B_{2}$};
\node [below] at (5,5) {\tiny $B_{6}$};
\node [right] at (6,6.5) {\tiny $U''_{1}$};

\draw (2,3)--(4,3)--(4,5)--(2,5)--(2,3);

\node at (3,4) {\tiny $T_{2}$};
\node [left] at (2,4) {\tiny $U'_{2}$};
\node [right] at (4,4) {\tiny $U''_{2}$};


\draw (0,0)--(6,0)--(6,3)--(0,3)--(0,0);

\draw [fill] (2,3) circle [radius=0.1];
\draw [fill] (0,3) circle [radius=0.1];
\draw [fill] (0,0) circle [radius=0.1];
\draw [fill] (6,0) circle [radius=0.1];
\draw [fill] (6,3) circle [radius=0.1];
\draw [fill] (4,3) circle [radius=0.1];

\node at (3,1.5) {\tiny $T_{3}$};
\node [above] at (1,3) {\tiny $B_{3}$};
\node [left] at (0,1.5) {\tiny $U'_{3}$};
\node [below] at (3,0) {\tiny $B_{4}$};
\node [right] at (6,1.5) {\tiny $U''_{3}$};
\node [above] at (5,3) {\tiny $B_{5}$};

\end{tikzpicture}
\end{center}
\caption{Case 3: At least one of the stacks whose bases are not part of $\partial\Delta$.}
\label{Case 3: at least one of the stacks whose bases are not part of Delta.}
\end{figure}
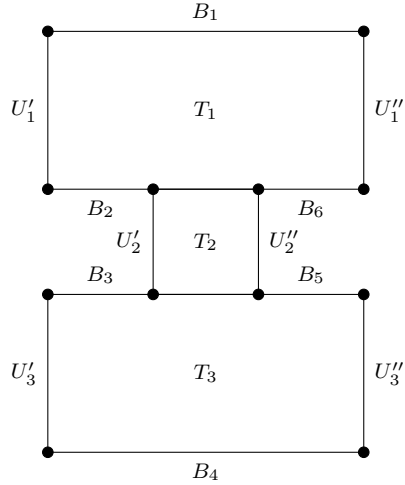

\textbf{Case 1.} Recall that each of $A_{i}$ is a leg of one or more stacks; each $B_{i}$ is part of a base of a stack or a base of a stack. By assumption, each stack $T_{i}$ has a base that is on $\partial\Delta$, say $B_{i}$ is a base of the stack $T_{i}$. If $B_{i}$ is the top of the stack $T_{i}$, then the top of the stack $T'_{i}$ in Figure \ref{Disjoing substactk T'_i and T'_j in T'.} is longer than $B_{i}$. Thus, $\text{Area}(T_{i})\leq\text{Area}(T'_{i})$. If $B_{i}$ is the bottom of the stack $T_{i}$, then consider a stack whose heights are $U'_{i},U''_{i}$ and whose top is $B_{i}$. The area of this stack is obviously greater than the area of $T_{i}$, but less than the area of $T'_{i}$ in Figure \ref{Disjoing substactk T'_i and T'_j in T'.}. Thus, for each stack $T_{i}$, $i=1,\cdots,m$, there is a stack $T'_{i}$ in $T'$ satisfying $\text{Area}(T_{i})\leq\text{Area}(T'_{i})$. These substacks $T'_{1},\cdots,T'_{m}$ are disjoint inside $T'$ because their legs are disjoint. Each substack $T'_{}$ is at different height inside $T'$, as shown in Figure \ref{Disjoing substactk T'_i and T'_j in T'.}:
\begin{figure}[H]
\begin{center}
\begin{tikzpicture}[scale=0.4]

\draw (0,14)--(16,14);
\draw [fill] (2,14) circle [radius=0.1];
\draw [fill] (4,14) circle [radius=0.1];
\node [above] at (3,14) {\tiny $B_{i}$};

\draw [fill] (10,14) circle [radius=0.1];
\draw [fill] (12,14) circle [radius=0.1];
\node [above] at (11,14) {\tiny $B_{j}$};

\draw (0,14)--(-1,12);
\draw (-1,12)--(-2,10);
\draw [middlearrow={stealth}] (-2,10)--(-3,8);
\draw [fill] (-2,10) circle [radius=0.1];
\draw [fill] (-3,8) circle [radius=0.1];
\node [left] at (-2.5,9) {\tiny $U'_{i}$};
\draw (-3,8)--(-4,6);
\draw [middlearrow={stealth}] (-4,6)--(-6,2);
\draw [fill] (-4,6) circle [radius=0.1];
\draw [fill] (-6,2) circle [radius=0.1];
\node [left] at (-5,4) {\tiny $U'_{j}$};
\draw (-6,2)--(-7,0);

\draw (16,14)--(17,12);
\draw (17,12)--(18,10);
\draw [middlearrow={stealth}] (18,10)--(19,8);
\draw [fill] (18,10) circle [radius=0.1];
\draw [fill] (19,8) circle [radius=0.1];
\node [right] at (18.5,9) {\tiny $U''_{i}$};
\draw (19,8)--(20,6);
\draw [middlearrow={stealth}] (20,6)--(22,2);
\draw [fill] (20,6) circle [radius=0.1];
\draw [fill] (22,2) circle [radius=0.1];
\node [right] at (21,4) {\tiny $U''_{j}$};
\draw (22,2)--(23,0);

\draw [loosely dotted] (2,14)--(0,10);
\draw [loosely dotted] (4,14)--(6,10);
\draw (0,10)--(6,10);
\draw (0,10)--(-1,8);
\draw (6,10)--(7,8);
\draw (-1,8)--(7,8);

\draw [loosely dotted] (0,10)--(-2,10);
\draw [loosely dotted] (-1,8)--(-3,8);
\draw [fill] (0,10) circle [radius=0.1];
\draw [fill] (-1,8) circle [radius=0.1];
\draw [loosely dotted] (6,10)--(18,10);
\draw [loosely dotted] (7,8)--(19,8);
\draw [fill] (6,10) circle [radius=0.1];
\draw [fill] (7,8) circle [radius=0.1];

\node at (3,9) {\tiny $T'_{i}$};

\draw [loosely dotted] (10,14)--(6,6);
\draw [loosely dotted] (12,14)--(16,6);
\draw (6,6)--(16,6);
\draw (6,6)--(4,2);
\draw (16,6)--(18,2);
\draw (4,2)--(18,2);

\draw [loosely dotted] (6,6)--(-4,6);
\draw [loosely dotted] (4,2)--(-6,2);
\draw [fill] (6,6) circle [radius=0.1];
\draw [fill] (4,2) circle [radius=0.1];
\draw [loosely dotted] (16,6)--(20,6);
\draw [loosely dotted] (18,2)--(22,2);
\draw [fill] (16,6) circle [radius=0.1];
\draw [fill] (18,2) circle [radius=0.1];

\node at (11,4) {\tiny $T'_{j}$};

\draw (-7,0)--(23,0);
\end{tikzpicture}
\end{center}
\caption{Disjoint substacks $T'_{i}$ and $T'_{j}$ in $T'$.}
\label{Disjoing substactk T'_i and T'_j in T'.}
\end{figure}
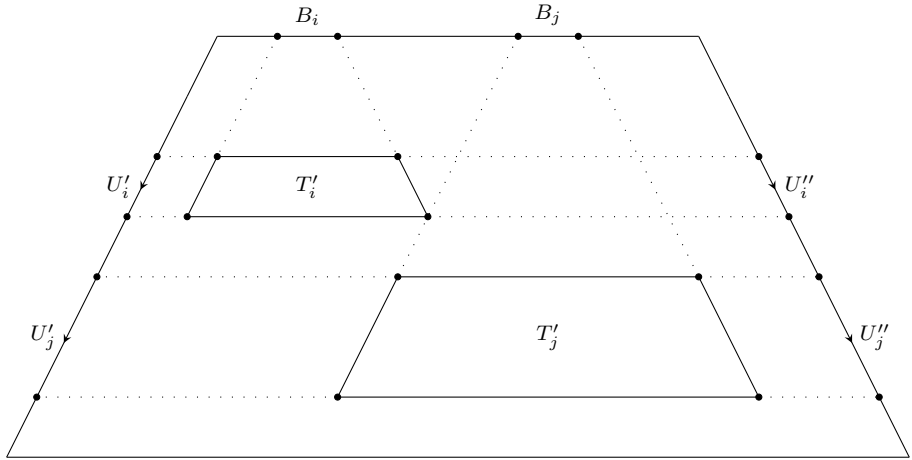
We have
\begin{align*}
\text{Area}(w)
&=\text{Area}(\Delta)                                              \\
&=\text{Area}(T_{1})+\cdots+\text{Area}(T_{m})                     \\
&\leq\text{Area}(T'_{1})+\cdots+\text{Area}(T'_{m})                     \\
&\leq\text{Area}(T')                                                    \\
&\leq C(h^{3}+lh^{2})                                             \\
&\leq 2Cn^{3} 
\end{align*} 
for some positive constant $C$, which does not depend on $|w|=n$. The second last inequality follows by Lemma \ref{stacks are cubic}, and the last inequality holds since $l\leq n$ and $h\leq n$. Thus, we prove the claim for the first case.

\textbf{Case 2.} Suppose that the van Kampen diagram $\Delta$ has some stacks whose bases are not all on $\partial\Delta$, as shown in Figure \ref{Case 2: at least one of the stacks such that part of its bases is part of Delta.}. Divide those stacks into smaller stacks as following:
\begin{figure}[H]
\begin{center}
\begin{tikzpicture}[scale=0.7]



\draw (2,0)--(2,5);
\draw (4,0)--(4,5);

\draw (6,0)--(6,-5);
\draw (9,0)--(9,-5);

\draw [fill] (2,0) circle [radius=0.1];
\draw [fill] (4,0) circle [radius=0.1];
\draw [fill] (6,0) circle [radius=0.1];
\draw [fill] (9,0) circle [radius=0.1];

\node at (1,1.5) {\tiny $\overline{T}_{1}$};
\node at (3,2.5) {\tiny $\overline{T}_{2}$};
\node at (6,1.5) {\tiny $\overline{T}_{3}$};
\node at (5.5,-1.5) {\tiny $\overline{T}_{4}$};
\node at (7.5,-2.5) {\tiny $\overline{T}_{5}$};
\node at (10,-1.5) {\tiny $\overline{T}_{6}$};

\draw (4,3)--(4,5)--(2,5)--(2,3);

\draw [fill] (2,3) circle [radius=0.1];
\draw [fill] (4,3) circle [radius=0.1];
\draw [fill] (4,5) circle [radius=0.1];
\draw [fill] (2,5) circle [radius=0.1];

\node [above] at (3,5) {\tiny $\overline{B}_{2}$};
\node [left] at (2,4) {\tiny $U'_{1}$};
\node [right] at (4,4) {\tiny $U''_{1}$};

\draw (0,0)--(8,0)--(8,3)--(4,3);
\draw (2,3)--(0,3)--(0,0);


\draw [fill] (0,3) circle [radius=0.1];
\draw [fill] (0,0) circle [radius=0.1];
\draw [fill] (5,0) circle [radius=0.1];
\draw [fill] (8,3) circle [radius=0.1];

\node [above] at (1,3) {\tiny $\overline{B}_{1}$};
\node [left] at (0,1.5) {\tiny $U'_{2}$};
\node [right] at (8,1.5) {\tiny $U''_{2}$};
\node [above] at (6,3) {\tiny $\overline{B}_{8}$};

\draw (5,0)--(11,0)--(11,-3)--(9,-3);
\draw (6,-3)--(5,-3)--(5,0);


\draw [fill] (11,0) circle [radius=0.1];
\draw [fill] (11,-3) circle [radius=0.1];
\draw [fill] (9,-3) circle [radius=0.1];
\draw [fill] (5,-3) circle [radius=0.1];
\draw [fill] (6,-3) circle [radius=0.1];

\node [left] at (5,-1.5) {\tiny $U'_{3}$};
\node [below] at (5.5,-3) {\tiny $\overline{B}_{4}$};
\node [below] at (10,-3) {\tiny $\overline{B}_{6}$};
\node [right] at (11,-1.5) {\tiny $U''_{3}$};

\draw (9,-3)--(9,-5)--(6,-5)--(6,-3);

\draw [fill] (6,-5) circle [radius=0.1];
\draw [fill] (9,-5) circle [radius=0.1];

\node [left] at (6,-4) {\tiny $U'_{4}$};
\node [below] at (7.5,-5) {\tiny $\overline{B}_{5}$};
\node [right] at (9,-4) {\tiny $U''_{4}$};
\end{tikzpicture}
\end{center}
\caption{}
\label{Divide the subdiagram Delta.}
\end{figure}

\noindent In Figure \ref{Divide the subdiagram Delta.}, each of the stacks $\overline{T}_{i}$ has at least one base $\overline{B}_{i}$ that is on $\partial\Delta$. Thus, Case 2 follows by Case 1.

\textbf{Case 3.} Suppose that there is a stack in $\Delta$ whose bases are not on $\partial\Delta$, as shown in Figure \ref{Case 3: at least one of the stacks whose bases are not part of Delta.}. Divide $\Delta$ and rearrange the stacks as follows: 
\begin{figure}[H]
\begin{center}
\begin{tikzpicture}[scale=0.7]

\path (2,0) rectangle (4,8);

\draw (2,0)--(2,8);
\draw (4,0)--(4,8);

\draw [fill] (2,0) circle [radius=0.1];
\draw [fill] (4,0) circle [radius=0.1];
\draw [fill] (2,8) circle [radius=0.1];
\draw [fill] (4,8) circle [radius=0.1];

\node [above] at (1,8) {\tiny $B'_{1}$};
\node [above] at (3,8) {\tiny $B''_{1}$};
\node [above] at (5,8) {\tiny $B'''_{1}$};

\node [below] at (1,0) {\tiny $B'_{4}$};
\node [below] at (3,0) {\tiny $B''_{4}$};
\node [below] at (5,0) {\tiny $B'''_{4}$};

\node at (1,6.5) {\tiny $\overline{T}_{1}$};
\node at (5,6.5) {\tiny $\overline{T}_{2}$};
\node at (3,4) {\tiny $\overline{T}_{3}$};
\node at (1,1.5) {\tiny $\overline{T}_{4}$};
\node at (5,1.5) {\tiny $\overline{T}_{5}$};

\draw (0,5)--(2,5);
\draw (4,5)--(6,5)--(6,8)--(0,8)--(0,5);


\draw [fill] (0,5) circle [radius=0.1];
\draw [fill] (2,5) circle [radius=0.1];
\draw [fill] (4,5) circle [radius=0.1];
\draw [fill] (6,5) circle [radius=0.1];
\draw [fill] (6,8) circle [radius=0.1];
\draw [fill] (0,8) circle [radius=0.1];

\node [left] at (0,6.5) {\tiny $U'_{1}$};
\node [below] at (1,5) {\tiny $B_{2}$};
\node [below] at (5,5) {\tiny $B_{6}$};
\node [right] at (6,6.5) {\tiny $U''_{1}$};

\draw (2,3)--(2,5);
\draw (4,3)--(4,5);


\node [left] at (2,4) {\tiny $U'_{2}$};
\node [right] at (4,4) {\tiny $U''_{2}$};

\draw (0,0)--(6,0)--(6,3)--(4,3);
\draw (2,3)--(0,3)--(0,0);

\draw [fill] (2,3) circle [radius=0.1];
\draw [fill] (0,3) circle [radius=0.1];
\draw [fill] (0,0) circle [radius=0.1];
\draw [fill] (6,0) circle [radius=0.1];
\draw [fill] (6,3) circle [radius=0.1];
\draw [fill] (4,3) circle [radius=0.1];

\node [above] at (1,3) {\tiny $B_{3}$};
\node [left] at (0,1.5) {\tiny $U'_{3}$};
\node [right] at (6,1.5) {\tiny $U''_{3}$};
\node [above] at (5,3) {\tiny $B_{5}$};

\end{tikzpicture}
\end{center}
\caption{}
\label{Divide the subdiagram Delta'' in Delta}
\end{figure}
\noindent Again, all the stacks in Figure \ref{Divide the subdiagram Delta'' in Delta} have a base that is on $\partial\Delta$. Thus, Case 3 follows by Case ~1. 

This completes the proof of the lemma. 
\end{proof}

\section{Graphs with $K_{4}$ subgraphs}\label{section graphs with K4 subgraphs}

In the previous sections, we considered finite simplicial graphs $\Gamma$ that do not contain $K_{4}$ subgraphs. In this section, we consider finite simplicial graphs that can contain $K_{4}$ subgraphs. The main difference is that the flag complexes on finite simplicial graphs with $K_{4}$ subgraphs are not $2$-dimensional.

Unfortunately, we have not been able to characterize the Dehn function $\delta_{H_{\Gamma}}$ when $\Delta_{\Gamma}$ is not $2$-dimensional. Instead, we obtain a lower bound for $\delta_{H_{\Gamma}}$ when $\Gamma$ contains induced subgraphs that satisfy the assumptions of Theorem \ref{Dehn functions for square boundary}.

\begin{definition}
We say that a subgroup $H$ is a \emph{retract} of a group $G$ if there is a homomorphism $r:G\rightarrow H$, such that $r:H\rightarrow H$ is the identity. We call the homomorphism $r$ a \emph{retraction}. 
\end{definition}

A standard fact about group retract is that if $H$ is a retract of a finitely presented group $G$, then $H$ is also finitely presented. The following lemma says that group retractions do not increase Dehn functions.

\begin{lemma}\textnormal{(\cite{brick}, Lemma 2.2)}\label{Dehn functions of group retract}
If $H$ is a retract of a finitely presented group $G$, then $\delta_{H}\preceq\delta_{G}$.
\end{lemma}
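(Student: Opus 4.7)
The plan is to pull a null-homotopic word in $H$ up to $G$ via the inclusion, fill it using $\delta_G$, and then push the filling back down to $H$ via $r$. Fix finite presentations $\langle X \mid P\rangle$ for $G$ and $\langle Y \mid Q\rangle$ for $H$ (which is finitely presented since it is a retract of a finitely presented group), together with the retraction $r \colon G \to H$ and the inclusion $\iota \colon H \hookrightarrow G$ satisfying $r \circ \iota = \mathrm{id}_H$. The first step is to choose, once and for all, a word $u_y \in F(X)$ representing $\iota(y)$ for each $y \in Y$ and a word $v_x \in F(Y)$ representing $r(x)$ for each $x \in X$. These induce free-group homomorphisms $\Phi \colon F(Y) \to F(X)$, $y \mapsto u_y$, and $\Psi \colon F(X) \to F(Y)$, $x \mapsto v_x$.

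Given a word $w \in F(Y)$ with $w \equiv_{H} 1$ and $|w| \le n$, the word $\Phi(w) \in F(X)$ represents the identity in $G$ and has length at most $Cn$ with $C = \max_{y \in Y} |u_y|$. Thus there is an equality
\[
\Phi(w) \;=_{F(X)}\; \prod_{i=1}^{N} g_i\, p_i^{\pm 1}\, g_i^{-1}, \qquad p_i \in P,\quad N \le \delta_G(Cn).
\]
Applying $\Psi$ to both sides gives the analogous identity in $F(Y)$. Each $\Psi(p_i)$ represents the identity in $H$, because $r(p_i) = 1$ and $\Psi$ realises $r$ at the word level, and its length is uniformly bounded; hence each $\Psi(p_i)$ has $Q$-area at most a fixed constant $A$ independent of $w$.

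It remains to replace $\Psi(\Phi(w))$ by $w$ itself in $F(Y)$. The identity $r \circ \iota = \mathrm{id}_H$ implies that $\Psi(u_y)\,y^{-1}$ is trivial in $H$ for every $y \in Y$, so each such word has some constant $Q$-area $D_y$. Substituting these corrections one letter at a time through $w$ transforms $\Psi(\Phi(w))$ into $w$ at an additional cost of at most $|w|\cdot\max_y D_y \le Dn$. Combining the two contributions,
\[
\mathrm{Area}_{H}(w) \;\le\; NA + Dn \;\le\; A\,\delta_G(Cn) + Dn,
\]
which is exactly the statement $\delta_H \preceq \delta_G$. The main (minor) obstacle is the bookkeeping in this final correction step; the heart of the argument is that $\Psi$ carries conjugates of $P$-relators to bounded-area null-homotopic words in $H$, so van Kampen fillings transport across the retraction at a uniformly bounded multiplicative cost.
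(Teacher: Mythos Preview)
Your argument is correct and is the standard proof of this well-known fact. Note, however, that the paper does not supply its own proof of this lemma: it simply cites \cite{brick}, Lemma 2.2, and uses the statement as a black box. What you have written is essentially the classical argument (lift the word to $G$ via $\iota$, fill it there, push the filling back through $r$, and pay a linear cost to correct $\Psi(\Phi(w))$ back to $w$ using $r\circ\iota=\mathrm{id}_H$), so there is nothing to compare against in the paper itself.
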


\begin{prop}\label{retract of Bestvina--Brady groups}
Let $\Gamma$ be a finite simplicial graph. If $\Gamma'$ is a connected induced subgraph of $\Gamma$, then $H_{\Gamma'}$ is a retract of $H_{\Gamma}$.
\end{prop}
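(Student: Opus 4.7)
The plan is to construct the retraction by first defining a natural retraction $\rho: A_\Gamma \to A_{\Gamma'}$ at the level of the ambient right-angled Artin groups, and then restricting it to the Bestvina--Brady groups. Define $\rho$ on generators by $\rho(v) = v$ for $v \in V(\Gamma')$ and $\rho(v) = 1$ for $v \in V(\Gamma) \setminus V(\Gamma')$. First I would verify that $\rho$ is a well-defined homomorphism: each defining commutation $[v_1, v_2] = 1$ of $A_\Gamma$ (one for every edge $v_1 v_2 \in E(\Gamma)$) is preserved, because either both endpoints lie in $V(\Gamma')$ and hence $v_1 v_2 \in E(\Gamma')$ by the inducedness of $\Gamma'$ (so the relation is already a defining relation of $A_{\Gamma'}$), or at least one image is $1$, making the commutator trivial in $A_{\Gamma'}$. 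Since $\rho$ fixes each generator of $A_{\Gamma'}$, it is a retraction of right-angled Artin groups.

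The second step is to show that $\rho$ restricts to a homomorphism $r: H_\Gamma \to H_{\Gamma'}$. For this, I would use the description (via Theorem \ref{Dicks and Leary presentation}) of $H_\Gamma$ as the subgroup of $A_\Gamma$ generated by oriented edges $e_{uv} = u^{-1}v$ for $uv \in E(\Gamma)$. Case-analyzing $\rho(e_{uv})$ by whether $u$ and $v$ lie in $V(\Gamma')$ or not, and exploiting both the connectedness of $\Gamma'$ and the triangle relations that rewrite composite generators of $H_\Gamma$, I would argue that any element of $H_\Gamma$ has image of height zero in $A_{\Gamma'}$ and therefore lies in $H_{\Gamma'}$. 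Once the restriction is established, $r|_{H_{\Gamma'}} = \mathrm{id}$ follows immediately from $\rho|_{A_{\Gamma'}} = \mathrm{id}$.

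The hard part will be the height-preservation verification in the second step: the naive definition of $\rho$ does not preserve the height function on individual edge-generators, since an isolated $e_{wv}$ with $w \in V(\Gamma) \setminus V(\Gamma')$ and $v \in V(\Gamma')$ maps to $v \in A_{\Gamma'}$, which has height $1$ rather than $0$. Overcoming this requires combining the connectedness of $\Gamma'$ with the combinatorial structure of the relations inherited from $\Gamma$, either to obtain a global cancellation of these apparent height discrepancies across any word representing an element of $H_\Gamma$, or to replace $\rho$ by a more refined map (for example, collapsing each outside vertex to a suitably chosen $v_0 \in V(\Gamma')$ rather than to the identity, with $v_0$ selected compatibly with the commutation relations in $\Gamma$).
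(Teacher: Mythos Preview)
Your approach coincides with the paper's: use the standard RAAG retraction $\rho\colon A_\Gamma \to A_{\Gamma'}$ (killing vertices outside $\Gamma'$) and restrict to the Bestvina--Brady subgroups. The paper's proof is in fact shorter than your outline: it simply writes $r' = r|_{H_\Gamma}\colon H_\Gamma \to H_{\Gamma'}$ and then only checks that $r'$ fixes the edge generators of $H_{\Gamma'}$, never verifying that $r(H_\Gamma)\subseteq H_{\Gamma'}$. You have correctly isolated this as the essential difficulty, and you are right to be worried: the standard $\rho$ does \emph{not} send $H_\Gamma$ into $H_{\Gamma'}$.

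Neither of your proposed repairs works, and in fact the proposition as stated is false. Global cancellation fails already on a single mixed edge, as you yourself observe. Sending each outside vertex to a height-$1$ element of $A_{\Gamma'}$ cannot always be made a homomorphism: if $v\notin V(\Gamma')$ is adjacent to $u_1,u_2\in V(\Gamma')$ whose closed stars in $\Gamma'$ are disjoint, no height-$1$ element of $A_{\Gamma'}$ commutes with both. For a counterexample to the proposition itself, let $\Gamma'$ be the path on vertices $u_1,w_1,w_2,u_2$ and $\Gamma=\{v\}\ast\Gamma'$. Then $\Gamma'$ is a connected induced subgraph, and since $v$ is central the map $A_{\Gamma'}\to H_\Gamma$ sending each vertex $x$ to $xv^{-1}$ is an isomorphism that restricts to the identity on $H_{\Gamma'}$; so $H_{\Gamma'}$ is a retract of $H_\Gamma$ if and only if it is a retract of $A_{\Gamma'}$. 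But a retraction $A_{\Gamma'}\to H_{\Gamma'}$ would exhibit $A_{\Gamma'}$ as a product of two normal subgroups ($H_{\Gamma'}$ and the kernel) with trivial intersection, forcing $A_{\Gamma'}\cong H_{\Gamma'}\times\mathbb{Z}$ and hence a nontrivial center---whereas $A_{\Gamma'}$ has trivial center. So the statement needs an extra hypothesis and a genuinely different argument before it can be completed.
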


\begin{proof}
Since $\Gamma'$ is an induced subgraph of $\Gamma$, $A_{\Gamma'}$ is a retract of $A_{\Gamma}$. Let $r:A_{\Gamma}\rightarrow A_{\Gamma'}$ be a retraction; define $r'=r\vert_{H_{\Gamma}}:H_{\Gamma}\rightarrow H_{\Gamma'}$. Since $\Gamma$ and $\Gamma'$ are connected, $H_{\Gamma}$ and $H_{\Gamma'}$ are finitely generated and their generating sets are sets of oriented edges of $\Gamma$ and $\Gamma'$, respectively. It suffices to show that $r'$ is the identity on the generating set of $H_{\Gamma'}$. Let $e$ be an oriented edge of $\Gamma'$ with initial vertex $v$ and terminal vertex $w$. The generator $e$ of $H_{\Gamma'}$ can be expressed in terms of the generators of $A_{\Gamma'}$, that is, $e=vw^{-1}$. Since $r:A_{\Gamma}\rightarrow A_{\Gamma'}$ is a retraction, we have
\begin{align*}
r'(e)=r(vw^{-1})=vw^{-1}=e.
\end{align*}
This shows that $r':H_{\Gamma}\rightarrow H_{\Gamma'}$ is a retraction.
\end{proof}

We establish the lower bound as promised.

\begin{prop}\label{Lower bound on K4 case}
Let $\Gamma$ be a finite simplicial graph such that $\Delta_{\Gamma}$ is simply-connected. If $\Gamma$ contains an induced subgraph $\Gamma'$ such that $\Delta_{\Gamma'}$ is a $2$-dimensional triangulated subdisk of $\Delta_{\Gamma}$ that has square boundary and $\dim_{I}(\Delta_{\Gamma'})=d$, $d\in\lbrace 0,1,2 \rbrace$, then $n^{d+2}\preceq\delta_{H_{\Gamma}}(n)$.
\end{prop}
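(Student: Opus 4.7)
The plan is to combine three ingredients already available in the paper: Theorem~\ref{Dehn functions for square boundary}, which tells us the exact Dehn function of $H_{\Gamma'}$; Proposition~\ref{retract of Bestvina--Brady groups}, which produces $H_{\Gamma'}$ as a retract of $H_{\Gamma}$; and Lemma~\ref{Dehn functions of group retract}, which guarantees that Dehn functions only decrease under retraction. So the proof is essentially a ``transport the known lower bound across the retraction'' argument.

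First, I would check the finite-presentation hypotheses needed to talk about Dehn functions. Since $\Delta_{\Gamma}$ is simply-connected by assumption, $H_{\Gamma}$ is finitely presented. Since $\Delta_{\Gamma'}$ is a triangulated disk, it is simply-connected and in particular connected, so $\Gamma'$ is a connected induced subgraph of $\Gamma$ and $H_{\Gamma'}$ is finitely presented as well.

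Next, I would invoke Proposition~\ref{retract of Bestvina--Brady groups} with this connected induced subgraph $\Gamma'$ to obtain a retraction $r':H_{\Gamma}\to H_{\Gamma'}$. Applying Lemma~\ref{Dehn functions of group retract} then yields
\[
\delta_{H_{\Gamma'}}(n)\preceq \delta_{H_{\Gamma}}(n).
\]
Finally, since $\Delta_{\Gamma'}$ is a $2$-dimensional triangulated disk with square boundary and $\dim_{I}(\Delta_{\Gamma'})=d\in\{0,1,2\}$, Theorem~\ref{Dehn functions for square boundary} gives $\delta_{H_{\Gamma'}}(n)\simeq n^{d+2}$. Combining these two relations yields $n^{d+2}\preceq \delta_{H_{\Gamma}}(n)$, as desired.

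There is no serious obstacle here: the entire content of the statement lives in the earlier theorem, and the present proposition is really an application of the retraction principle. The only subtlety worth double-checking is that $\Gamma'$ is connected, which is automatic from the hypothesis that $\Delta_{\Gamma'}$ is a triangulated disk; without that, Proposition~\ref{retract of Bestvina--Brady groups} could not be applied.
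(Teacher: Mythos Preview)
Your proposal is correct and follows essentially the same approach as the paper: invoke Theorem~\ref{Dehn functions for square boundary} for $\delta_{H_{\Gamma'}}$, use Proposition~\ref{retract of Bestvina--Brady groups} to realize $H_{\Gamma'}$ as a retract of $H_{\Gamma}$, and conclude via Lemma~\ref{Dehn functions of group retract}. Your extra care in verifying the finite-presentation and connectedness hypotheses is a nice addition but does not change the argument.
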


\begin{proof}
By Theorem \ref{Dehn functions for square boundary}, we have $\delta_{H_{\Gamma'}}(n)\simeq n^{d+2}$. Since $\Gamma'$ is an induced subgraph of $\Gamma$, it follows from Proposition \ref{retract of Bestvina--Brady groups} and Lemma \ref{Dehn functions of group retract} that $H_{\Gamma'}$ is a retract of $H_{\Gamma}$ and $n^{d+2}\simeq\delta_{H_{\Gamma'}}(n)\preceq\delta_{H_{\Gamma}}(n)$. 
\end{proof}

We remark that Proposition \ref{Lower bound on K4 case} can be used to give the cubic lower bound in Theorem \ref{Dehn functions for square boundary} for the case $d=1$. We close this section with an example.

\begin{example}\label{Double of K4s}
Let $\Gamma$ be the graph as shown in Figure \ref{example of induced subgraph}:
\begin{figure}[H]
\begin{center}
\begin{tikzpicture}[scale=0.3]
\draw (3,7)--(-4,0);
\draw (3,7)--(0,3);
\draw (3,7)--(0,1.5);
\draw (3,7)--(0,0);
\draw (3,7)--(2,0);
\draw (3,7)--(4,0);
\draw (3,7)--(6,0);
\draw (3,7)--(6,1.5);
\draw (3,7)--(6,3);
\draw (3,7)--(10,0);

\draw (0,3)--(0,0)--(-4,0)--(0,3);

\draw (-4,0)--(0,1.5);
	
\draw (0,0)--(6,0);
	
\draw (10,0)--(6,1.5);
	
\draw (6,3)--(6,0)--(10,0)--(6,3);

\draw [fill] (-4,0) circle [radius=0.1];
\draw [fill] (0,0) circle [radius=0.1];
\draw [fill] (2,0) circle [radius=0.1];
\draw [fill] (4,0) circle [radius=0.1];
\draw [fill] (6,0) circle [radius=0.1];
\draw [fill] (10,0) circle [radius=0.1];

\draw [fill] (0,1.5) circle [radius=0.1];
\draw [fill] (0,3) circle [radius=0.1];
	
\draw [fill] (6,1.5) circle [radius=0.1];
\draw [fill] (6,3) circle [radius=0.1];

\draw [fill] (3,7) circle [radius=0.1];

\draw (3,-7)--(-4,0);
\draw (3,-7)--(0,-3);
\draw (3,-7)--(0,-1.5);
\draw (3,-7)--(0,0);
\draw (3,-7)--(2,0);
\draw (3,-7)--(4,0);
\draw (3,-7)--(6,0);
\draw (3,-7)--(6,-1.5);
\draw (3,-7)--(6,-3);
\draw (3,-7)--(10,0);

\draw (0,-3)--(0,0)--(-4,0)--(0,-3);
 
\draw (-4,0)--(0,-1.5);
\draw (10,0)--(6,-1.5);

\draw (6,-3)--(6,0)--(10,0)--(6,-3);

\draw [fill] (0,-1.5) circle [radius=0.1];
\draw [fill] (0,-3) circle [radius=0.1];
\draw [fill] (6,-1.5) circle [radius=0.1];
\draw [fill] (6,-3) circle [radius=0.1];	
\draw [fill] (3,-7) circle [radius=0.1];

\end{tikzpicture}
\end{center}
\caption{The graph $\Gamma$.}
\label{example of induced subgraph}
\end{figure}
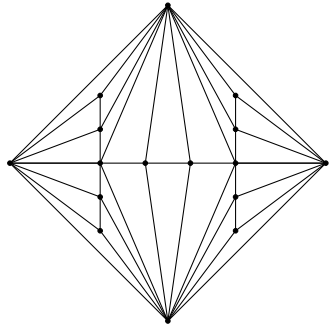

\noindent The flag complex $\Delta_{\Gamma}$ is not $2$-dimensional since $\Gamma$ contains $K_{4}$ subgraphs. Observe that $\Gamma$ contains an induced subgraph $\Gamma'$ that is the suspension of a path of length $3$. Hence, by Theorem \ref{Dehn functions for square boundary} and Proposition \ref{retract of Bestvina--Brady groups}, we have $n^{3}\simeq\delta_{H_{\Gamma''}}(n)\preceq\delta_{H_{\Gamma}}(n)$.
\end{example}

\bibliographystyle{plain}
\bibliography{references}
\end{document}